\newtheorem{theorem}{Theorem}[section]
\newtheorem{lemma}[theorem]{Lemma}
\newtheorem{prop}[theorem]{Proposition}
\newtheorem{cor}[theorem]{Corollary}
\theoremstyle{definition}
\newtheorem{definition}[theorem]{Definition}
\theoremstyle{remark}
\newtheorem{remark}[theorem]{Remark}
\numberwithin{equation}{section}
\newcommand{\Rn}{\mathbb{R}^n}
\newcommand{\p}[1]{\partial {#1}}
\newcommand{\supp}{\operatorname{supp}}
\newcommand{\ang}[1]{\left\langle{#1}\right\rangle}
\newcommand{\compl}{{\mathsf{c}}}
\newcommand{\dx}{\mathop{}\!dx}
\newcommand{\dy}{\mathop{}\!dy}
\newcommand{\ds}{\mathop{}\!ds}
\begin{document}

% \title[short text for running head]{full title}
\title[Large-time behavior of one-phase Stefan-type problems]{Large-time behavior of one-phase Stefan-type problems with anisotropic diffusion in periodic media}

%    Only \author and \address are required; other information is
%    optional.  Remove any unused author tags.

%    author one information
% \author[short version for running head]{name for top of paper}
\author{Norbert Po\v z\'ar}
\address[N. Po\v{z}\'ar]{Faculty of Mathematics and Physics, Institute of Science and Engineering, Kanazawa University, Kakuma, Kanazawa, 920-1192, Japan}
\curraddr{}
\email{npozar@se.kanazawa-u.ac.jp }
\thanks{}

\author{Giang Thi Thu Vu}
\address[G. T. T. Vu] {Department of Mathematics, Faculty of Information Technology, Vietnam National University of Agriculture, Hanoi, Vietnam}
\curraddr{}
\email{vtgiang@vnua.edu.vn}
\thanks{}

\subjclass[2000]{35B27 (35R35, 74A50, 80A22)}
\keywords{Stefan problem, homogenization, viscosity solutions, large-time behavior}

\date{\today}

\begin{abstract}
  We study the large-time behavior of solutions of a one-phase Stefan-type problem with anisotropic diffusion in periodic media on an exterior domain in a dimension $n \geq 3$. By a rescaling transformation that matches the expansion of the free boundary, we deduce the homogenization of the free boundary velocity together with the homogenization of the anisotropic operator. Moreover, we obtain the convergence of the rescaled solution to the solution of the homogenized Hele-Shaw-type problem with a point source and the convergence of the rescaled free boundary to a self-similar profile with respect to the Hausdorff distance.
\end{abstract}

\maketitle
\section{Introduction}

We analyze the behavior of an anisotropic one-phase Stefan-type problem with periodic coefficients on an exterior domain in a dimension $n \geq 3$. Our purpose is to investigate the asymptotic behavior of the solution of the problem \eqref{Stefan} below and its free boundary as time $t \to \infty$. The results in this paper are the generalizations of our previous work in \cite{PV} for the isotropic case.

We consider a compact set $K \subset \mathbb{R}^n$ that represents a source. We assume that $0 \in \operatorname{int}K$ and $K$  has a sufficiently regular boundary, $\partial K \in C^{1,1}$ for example. The one-phase Stefan-type problem with anisotropic diffusion is to find a  function $v(x,t): \mathbb{R}^n \times (0,\infty) \rightarrow [0,\infty)$ satisfying
\begin{equation}
	\label{Stefan}
	\left\{
	\begin{aligned}
		v_t- D_i(a_{ij}D_jv)&=0 && \text{ in } \{v>0\} \setminus K ,\\
		v &=1 && \text{ on } K,\\
		\frac{v_t}{|Dv|} &=g\,a_{ij}\,D_jv\, \nu_i && \text{ on }\partial \{v>0\},\\
		v(x,0)&=v_0 &&\text{ on } \mathbb{R}^n,
	\end{aligned}
	\right.
\end{equation}
where $D$ is the space gradient, $D_i$ is the partial derivative with respect to $x_i$, $v_t$ is the partial derivative of $v$ with respect to time variable $t$ and  $\nu=\nu(x,t)$ is the inward spatial unit normal vector of $\partial \{v>0\}$ at a point $(x,t)$. Here we use the Einstein summation convention.

The Stefan problem is a free boundary problem of parabolic type for phase transitions, typically describing the melting of ice in contact with a region of water. Here we consider the one-phase problem, where the temperature is assumed to be maintained at $0$ in one of the phases. We prescribe the Dirichlet boundary data $1$ on the fixed source $K$ and an initial temperature distribution $v_0$. Note that the results in this paper apply to a more general time-independent positive fixed boundary data, the constant function $1$ is taken only to simplify the notation. We also specify an inhomogeneous medium with the latent heat of phase transition $L(x)=\frac{1}{g(x)}$ and an anisotropic diffusion with the thermal conductivity coefficients given by $a_{ij}(x)$. The unknowns here are the temperature distribution $v$ and the phase interface $\partial \{v>0\}$, which is the so-called free boundary. Since the free boundary is a level set of $v$, the outward normal velocity of the moving interface is given by $\frac{v_t}{|Dv|}$. The free boundary condition thus says that the interface moves outward with the velocity $ga_{ij}D_jv \nu_i$ in the normal direction. Note that we can also rewrite the free boundary condition as
\begin{equation}
\label{Stefan bdr cond}
v_t=g\, a_{ij}\, D_jvD_i v.
\end{equation}

Throughout this paper, we will consider the problem under the following assumptions. The matrix $A(x)=(a_{ij}(x))$ is assumed to be symmetric, bounded, and uniformly elliptic, i.e., there exits some positive constants $\alpha$ and $\beta$ such that
\begin{equation}
\label{ellipticity}
\alpha |\xi|^2 \leq a_{ij}(x)\,\xi_i \xi_j \leq \beta|\xi|^2 \quad \mbox{ for all } x \in \mathbb{R}^n \mbox{ and } \xi \in \mathbb{R}^n.
\end{equation}

Moreover, we are interested in the problems with highly oscillating coefficients that guarantee an averaging behavior in the scaling limit, in particular,
\begin{equation}
\label{condition in media}
\begin{aligned}
  &\text{$a_{ij}$ and $g$ are $\mathbb{Z}^n$-periodic Lipschitz functions in $\mathbb{R}^n$},\\
&m\leq g \leq M \mbox{ for some positive constants } m \mbox{ and } M.
\end{aligned}
\end{equation}
	From the ellipticity \eqref{ellipticity} and the boundedness of $g$,  we also have
	\begin{equation}
	\label{ellipticity and Stefan bdr cond}
  m\alpha|\xi|^2\leq g(x)a_{ij}(x)\xi_i \xi_j \leq M\beta |\xi|^2 \mbox{ for all } x \in \Rn \mbox{ and } \xi \in \Rn.
	\end{equation}
	Furthermore, during almost the whole paper, the initial data is assumed to satisfy
\begin{equation}
\label{initial data}
\begin{aligned}
&v_0 \in C^2(\overline{\Omega_0 \setminus K}), v_0 > 0 \text{ in } \Omega_0, v_0=0 \mbox{ on } \Omega_0^\compl := \Rn \setminus
\Omega_0,
\mbox{ and } v_0=1 \mbox{ on } K,\\& |Dv_0| \neq 0 \text{ on } \partial \Omega_0,
\text{ for some bounded domain $\Omega_0 \supset K$.}
\end{aligned}
\end{equation}
Here we use a stronger regularity of the initial data than the general requirement to guarantee the well-posedness of the Stefan problem \eqref{Stefan} (see \cite{FK,K1}) and the coincidence of weak and viscosity solutions used in our work (see \cite{K3}). However, our convergence results rely on a crucial weak monotonicity \eqref{weak monotonicity} which holds provided the initial data satisfies \eqref{initial data}. Nevertheless, the asymptotic limit in Theorem \ref{th:main-convergence} is independent of the initial data. Therefore we are able to apply the results for more general initial data. In particular, it is sufficient if the initial data guarantees the existence of the (weak) solution satisfying the comparison principle, and we can approximate the initial data from below and from above by regular data satisfying \eqref{initial data}. As the classical problem \cite{PV}, $v_0 \in C(\Rn), v_0 = 1 \mbox{ on } K, v_0 \geq 0, \operatorname{supp} v_0$ compact is enough.

A global classical solution of the Stefan problem \eqref{Stefan} is not expected to exist due to the singularities of the free boundary, which might occur in finite time. This motivated the introduction of a generalized solution of this problem. In this paper, we will use the notions of weak solutions and viscosity solutions. The notion of weak solutions is more classical, defined by taking the integral in time of the classical solution $v$ and looking at the equation that the new function $u(x,t):= \int_{0}^{t}v(x,s)\ds$ satisfies. It turns out that if $v$ is sufficiently regular, then  $u(\cdot,t)$ solves the obstacle problem (see \cite{Baiocchi,Duvaut,FK,EJ,R1,R2,RReview})
\begin{equation}
\label{obstacle problem}
\left \{
\begin{aligned}
&u(\cdot,t) \in \mathcal{K}(t),\\
&\left(u_t - D_i(a_{ij}D_ju)\right)(\varphi - u) \geq f(\varphi -u) \mbox{ a.e } (x,t) \mbox{ for any } \varphi \in
\mathcal{K}(t),
\end{aligned}
\right.
\end{equation}
where $\mathcal{K}(t)$ is a suitable admissible functional space (see Section~\ref{sec:weak-sol}) and $f$ is
\begin{equation}
\label{f}
f(x)=
\left\{\begin{aligned}
&v_0(x), && v_0(x) > 0,\\
&-\displaystyle \frac{1}{g(x)}, && v_0(x) = 0.
\end{aligned}
\right.
\end{equation}
This formulation interprets the Stefan problem as a fixed domain problem and allows us to apply the well-known results in the general variational inequality theory. Indeed, the obstacle problem \eqref{obstacle problem} has a global unique solution $u$ for the initial data \eqref{initial data}. If the corresponding time derivative $v=u_t$ exists, it is called a \emph{weak solution} of the Stefan problem (\ref{Stefan}). Moreover, the homogenization of this problem was also studied using the homogenization theory of variational inequalities, see \cite{R3,K2,K3}. In a different approach based on the comparison principle structure, Kim introduced the notion of viscosity solutions of the Stefan problem as well as proved the global existence and uniqueness results in \cite{K1}, which were later generalized to the two-phase Stefan problem in \cite{KP}. The analysis of viscosity solutions relies on the comparison principle and pointwise arguments, which are more suitable for the study of the behavior of the free boundaries. The notions of weak and viscosity solutions were first introduced for the  classical homogeneous isotropic Stefan problem where $g(x)=1$ and the parabolic operator is the simple heat operator, however, it is natural to define the same notions for the Stefan problem \eqref{Stefan} and obtain the analogous results as observed in \cite{R3,K3}. Moreover, the notion of viscosity solutions is also applicable for more general, fully nonlinear parabolic operators and boundary velocity laws since it does not require the variational structure.  In \cite{K3}, Kim and Mellet showed that the weak and the viscosity solutions of \eqref{Stefan} coincide whenever the weak solution exists. We will use the strengths of both weak and viscosity solutions to study \eqref{Stefan}.

Among the first results on the asymptotic and large time behavior of solutions of the one-phase Stefan problem on an exterior domain was the work of Matano \cite{Matano} for the classical homogeneous isotropic Stefan problem in dimensions $n \geq 3$. He showed that in this setting, any weak solution eventually becomes classical after a finite time and that the shape of the free boundary approaches a sphere of radius $Ct^{\frac{1}{n}}$ as $t \to \infty$. Note that in our case of an inhomogeneous free boundary velocity we cannot expect the solution to become classical even for large times. In \cite{QV}, Quir\'{o}s and V\'{a}zques extended the results of \cite{Matano} to the case $n=2$ and showed the large-time convergence of the rescaled weak solution of the one-phase Stefan problem to the self-similar solution of the Hele-Shaw problem with a point source. The related Hele-Shaw problem is also called the quasi-stationary Stefan problem, where the heat operator is replaced by the Laplace operator. It typically models the flow of a viscous fluid injected in between two parallel plates which form the so-called Hele-Shaw cell, or the flow in porous media. The global stability of steady solutions of the classical Stefan problem on the full space was established in the recent work of Had\v zi\'c and Shkoller \cite{HS_CPAM,HS_Royal}, and further developed for the two-phase Stefan problem in \cite{HNS}.

The homogenization of the one-phase Stefan problem \eqref{Stefan} and the Hele-Shaw problem in both periodic and random media was obtained by Rodrigues in \cite{R3} and later by Kim and Mellet in \cite{K2,K3}. See also the work on the homogenization of the two-phase Stefan problem by Bossavit and Damlamian \cite{BD}. Dealing directly with the large-time behavior of the solutions on an exterior domain in inhomogeneous media, the work of the first author in \cite{P1}, and then the joint work of both authors in \cite{PV} showed the convergence of an appropriate rescaling of solutions of both models to the self-similar solution of the Hele-Shaw problem with a point source. The rescaled free boundary was shown to uniformly approach a sphere.

In this paper, we extend the previous results in \cite{PV} to the anisotropic case, where the heat operator is replaced by a more general linear parabolic operator of divergence form. This was indeed the setting considered in \cite{R3, K3} for the homogenization problems. In this setting, the variational structure is preserved, thus we are still able to use the notions of weak solutions as well as viscosity solutions and their coincidence. However, the main difficulties come from the loss of radially symmetric solutions which were used as barriers in the isotropic case and the homogenization problems appear not only for the velocity law but also for the elliptic operators. To overcome the first difficulty, we will construct barriers for our problem from the fundamental solution of the corresponding elliptic equation in a divergence form. Unfortunately, even though the unique fundamental solution of this elliptic equation exists for $n\geq 2$, its asymptotic behavior in dimension $n=2$ and dimensions $n\geq 3$ are significantly different. Moreover, we need to use the gradient estimate \eqref{gradient estimate fundamental sol} for the fundamental solution, which only holds for the periodic structure. Therefore, we will limit our consideration to periodic media and dimension $n\geq 3$. Following \cite{QV,P1,PV}, we use the rescaling of solutions as
\begin{equation*}
\begin{aligned}
  & v^\lambda(x,t):=\lambda^{\frac{n-2}{n}}v(\lambda^{\frac{1}{n}}x,\lambda t), && u^\lambda(x,t):=\lambda^{-\frac{2}{n}}u(\lambda^{\frac{1}{n}}x,\lambda t),
  \quad \lambda > 0.
\end{aligned}
\end{equation*}
Using this rescaling we can deduce the uniform convergence of the rescaled solution to a limit function away from the origin. In the limit, the fixed domain $K$ shrinks to the origin due to the rescaling, and the rescaled solutions develop a singularity at the origin as $\lambda \to \infty$. Moreover, in a periodic setting, the elliptic operator and velocity law should homogenize as $\lambda \to \infty$, and therefore  heuristically the limit function should be the self-similar solution of the Hele-Shaw-type problem with a point source
\begin{equation}
\label{hs-point-source}
\left\{
\begin{aligned}
-q_{ij}D_{ij}v&=C\delta && \mbox{ in }\{v>0\},\\
v_t&=\frac{1}{\left<\frac{1}{g}\right>}q_{ij}D_ivD_jv && \mbox{ on } \partial\{v>0\},\\
v(\cdot,0)&=0,
\end{aligned}
\right.
\end{equation}
where $\delta$ is the Dirac $\delta$-function, $(q_{ij})$ is a constant symmetric positive definite matrix depending only on $a_{ij}$, $C$ is a constant depending on $K, n, q_{ij}$ and the boundary data $1$, and the constant $\left<\frac{1}{g}\right>$ is the average value of the latent heat $L(x)=\frac{1}{g(x)}$. Similarly, the limit variational solution should satisfy the corresponding limit obstacle problem.

The first main result of this paper, Theorem \ref{convergence of variational solutions}, is the locally uniform convergence of the rescaled variational solution to the solution of the limit obstacle problem. Using the constructed barriers, we are able to prove that the limit function has the correct singularity as $|x| \to 0$. Moreover, the barriers also give the same growth rate for the free boundary as in the isotropic case. That is, the free boundary  expands with the rate $t^{\frac{1}{n}}$ when $t$ is large enough. The aim is then to prove the homogenization effects of the rescaling to our problem. The shrinking of the fixed domain $K$ in the rescaling also makes our current situation slightly different from the standard classical homogenization problem of variational inequalities, where the domain and the boundary condition are usually fixed. In addition, we also need to show that the rescaled parabolic operator becomes elliptic when $\lambda \to 0$. We will use the notion of the $\Gamma$-convergence introduced by De Giorgi and homogenization techniques developed by Dal Maso and Modica in \cite{DM1,DM2,D}. The issue here is that we need to modify the $\Gamma$-convergence sequence in order to use the integration by parts formula for the variational inequality. This will be done with the help of a cut-off function and the \emph{fundamental estimate}, Definition~\ref{def:fundamental-estimate}, for the $\Gamma$-convergence. Note that these techniques are applicable not only for the periodic case but also for the random case, thus we expect to extend our results to the problem in random media in the future.

 As the last step, we will use the coincidence of the weak and viscosity solution of the problem \eqref{Stefan} and the viscosity arguments to obtain the uniform convergence of the rescaled viscosity solution and its free boundary to the asymptotic profile in the second main result, Theorem \ref{convergence of rescaled viscosity solution}. Fortunately, all the viscosity arguments of the isotropic case can be adapted for the anisotropic case. Therefore the proof is similar to the proof of \cite[Theorem 4.2]{PV}, where we make use of a weak monotonicity \eqref{weak monotonicity} together with the comparison principle. An important point in the proof of\cite[Theorem 4.2]{PV} is that we need to apply Harnack's inequality for a parabolic equation which becomes elliptic in the limit. Here we can proceed as in
  the isotropic case since the rescaled elliptic operator does not change the constant in Harnack's inequality. As the arguments require only simple modifications, we will skip the proofs of some lemmas and refer to \cite{PV} for more details.

 In summary, we will show the following theorem.
	\begin{theorem}
		\label{th:main-convergence}
	The rescaled viscosity solution $v^\lambda$ of the Stefan-type problem \eqref{Stefan} converges
		locally uniformly to the unique self-similar solution $V$ of the Hele-Shaw type problem
		\eqref{hs-point-source} in $(\mathbb{R}^n \setminus \{0\}) \times [0,\infty)$ as $\lambda
    \rightarrow \infty$, where $(q_{ij})$ is a constant symmetric positive definite matrix depending only on $a_{ij}$, $C$ depends only on $q_{ij}, n$, the set $K$ and the boundary data $1$.
		Moreover, the rescaled free boundary $\partial \{(x,t):  v^\lambda(x,t) > 0\}$ converges to
		$\partial \{(x,t): V(x,t) > 0\}$ locally uniformly with respect to the Hausdorff distance.
	\end{theorem}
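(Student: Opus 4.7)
The plan is to split the convergence into two stages: first a variational/weak-solution convergence based on $\Gamma$-convergence of the associated obstacle functionals, and then an upgrade to locally uniform convergence of the viscosity solution and its free boundary. Accordingly, I would pass from the viscosity solution $v^\lambda$ to the time-integrated function $u^\lambda$ which solves the rescaled obstacle problem associated with \eqref{obstacle problem}; this uses that weak and viscosity solutions of \eqref{Stefan} coincide (by the Kim--Mellet result cited in the introduction). The rescaled obstacle problem has three features that make it nonstandard: the source set $K$ shrinks to a point as $\lambda\to\infty$, the parabolic operator $\partial_t - D_i(a_{ij}(\lambda^{1/n} x) D_j\,\cdot\,)$ formally degenerates to an elliptic one in the limit, and the free boundary velocity coefficient $g(\lambda^{1/n}x)$ should average to $\ang{1/g}^{-1}$.

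For the variational step I would apply the Dal Maso--Modica framework for $\Gamma$-convergence of integral functionals. The elliptic part homogenizes to the constant-coefficient operator $-q_{ij}D_{ij}$, while the obstacle-type term involving $g$ produces the averaged velocity law. The difficulty is that the usual $\Gamma$-convergence recovery sequence need not satisfy the boundary conditions on $\partial K$ (which moves with $\lambda$) and the singular behavior at the origin in the limit; I would resolve this with the \emph{fundamental estimate} of Definition~\ref{def:fundamental-estimate} glued by a cut-off between the recovery sequence and an explicit test function matching the boundary data, so that the resulting inequality in \eqref{obstacle problem} passes to the limit. The outcome is convergence of $u^\lambda$ to the solution $U$ of the limit obstacle problem for $-q_{ij}D_{ij}$ with a point source at the origin, whose time derivative is the self-similar profile $V$ of \eqref{hs-point-source}.

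In parallel I would build super- and sub-solutions from the fundamental solution of the homogenized divergence-form elliptic operator, using the gradient estimate \eqref{gradient estimate fundamental sol} (valid in periodic media for $n\ge 3$). These barriers serve two purposes: they force the correct $|x|^{-(n-2)}$-type singularity for the limit at the origin, and, inserted into the free boundary condition \eqref{Stefan bdr cond}, they give both an inner and outer $t^{1/n}$-expansion rate for $\partial\{v^\lambda>0\}$. This provides the uniform bounds on the rescaled free boundary needed to localize the analysis away from the origin and away from infinity.

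Finally, to upgrade from variational convergence of $u^\lambda$ to locally uniform convergence of $v^\lambda$ and Hausdorff convergence of the free boundaries, I would follow the viscosity strategy of \cite[Theorem~4.2]{PV}. The half-relaxed limits $v^* := \limsup{}^* v^\lambda$ and $v_* := \liminf{}_* v^\lambda$ are viscosity sub- and super-solutions of \eqref{hs-point-source}; by the comparison principle for the homogenized Hele-Shaw-type problem they must equal the self-similar $V$, provided one can exclude a jump at $\partial\{V>0\}$. This exclusion is driven by the weak monotonicity \eqref{weak monotonicity} available under \eqref{initial data}, together with the already-proved variational convergence, and uses Harnack's inequality applied to $v^\lambda$ to propagate positivity; crucially, the Harnack constant for the rescaled operator does not degenerate since it depends only on the ellipticity bounds $\alpha,\beta$. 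Hausdorff convergence of the free boundaries then follows from the nondegeneracy built into the barriers together with the uniform convergence. The main obstacle is the variational step: verifying the fundamental estimate around the shrinking set $K$ and around the singular origin, so that the obstacle inequality survives the passage to the limit despite the moving boundary data.
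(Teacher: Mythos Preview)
Your overall two-stage strategy---variational/$\Gamma$-convergence of $u^\lambda$ followed by a viscosity upgrade via half-relaxed limits, weak monotonicity, and Harnack---matches the paper's proof. Two points, however, need correction.

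First, the barriers must be built from the fundamental solution $F$ of the \emph{original} periodic operator $\mathcal{L}$, not the homogenized one. A function of the form $[C_1 F^0(x)-C_2 t^{(2-n)/n}]_+$ satisfies $\mathcal{L}^0\theta=0$ but not $\mathcal{L}\theta=0$, so it is not a classical super/subsolution of the unrescaled problem \eqref{Stefan} and cannot be compared with $v$. The paper constructs the barriers from $F$ (this is exactly where the periodic gradient estimate \eqref{gradient estimate fundamental sol} is used), and only \emph{after} rescaling do they converge to profiles involving $F^0$ via Lemma~\ref{rescaled fund.sol}.

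Second, in the final step the paper does \emph{not} show that $v^*,v_*$ are sub/supersolutions of the full Hele-Shaw problem \eqref{hs-point-source} and then invoke a comparison principle for that singular problem. Instead it shows (Lemma~\ref{relationship v^*,v_*,V}) that $v^*(\cdot,t)$ and $v_*(\cdot,t)$ are sub/supersolutions of the limit \emph{elliptic} equation $-\mathcal{L}^0 u=0$, and separately establishes the free-boundary inclusions $\Omega(V)\subset\Omega(v_*)$ and $\Gamma(v^*)\subset\Gamma(V)$ from weak monotonicity and the variational convergence. Elliptic comparison on $\Omega_t(V)$, together with the singularity control at the origin, then sandwiches $v_*=v^*=V$. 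Your route via a Hele-Shaw-with-point-source comparison principle would require proving such a principle with the singular source, which the paper avoids.
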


	As mentioned above, almost all of the arguments in our recent work hold for stationary ergodic random case. However, in this situation, we lose a very important pointwise gradient estimate \eqref{gradient estimate fundamental sol} for the fundamental solution of the corresponding elliptic equation to construct the barriers. In fact, for non-periodic coefficients, even though the optimal bounds for the gradient continue to hold for a bounded domain, they cannot hold in the large scale when $|x-y| \to \infty$. The results in \cite{MO,GM} tell us that for random stationary coefficients satisfying a logarithmic Sobolev inequality we have similar bounds for the gradient in local square average forms. This result cannot be upgraded to the pointwise bounds since there is no regularity to control the square average integral as in \cite[Remark 3.7]{GM}. However, it suggests the possibility to modify our approach to the random case. Another question is the extension of the present results to the dimension $n=2$. Since the unique (up to an addition of a constant) fundamental solution of the corresponding elliptic equation exists and the gradient estimates also hold in the two-dimensional case, we expect to obtain analogous results as in this paper. The essential reason that it remains open is the lack of a homogenization result for the fundamental solution (Green's function) in two dimensions, which is of an independent interest. This issue is under investigation by the authors.

	\subsection*{The structure of the paper is as follows:} The definitions and the well-known results for weak and viscosity solutions are recalled in Section 2. We also review some basic known facts about the fundamental solution of the corresponding elliptic equation. The rescaling is introduced and we discuss the convergence of the fundamental solution in the rescaling limit. The core of this section is the construction of a subsolution and a supersolution of the Stefan problem \eqref{Stefan} in Section \ref{sec: sub and super sol}. Moreover, we formulate the limit problems before giving the proofs of the main results in the later sections. Section 3 is our main contribution, where we prove the locally uniform convergence of the rescaled variational solutions. In Section 4, we deal with the locally uniform convergence of the rescaled viscosity solutions and their free boundaries.
\subsection*{Notation}
\label{sec:notation}
We will use the following notations throughout this paper.
For a set $A$, $A^\compl$  is its complement. Given a nonnegative function $v$, we will denote its positive set and free boundary as
$$\Omega(v):=\{(x,t): v(x,t)>0\}, \qquad  \Gamma(v):=\partial\Omega(v),$$
and for a fixed time $t$,
$$ \Omega_t(v):=\{x: v(x,t)>0\}, \qquad \Gamma_t(v):=\partial\Omega_t(v) .$$

We will denote the elliptic operator of divergence form and its rescaling as
\begin{equation}
  \label{operator-L}
  \mathcal{L}u=D_j(a_{ij}D_i u), \qquad \mathcal{L}^\lambda u=D_j(a_{ij}(\lambda^{\frac{1}{n}}x)D_i u), \quad \lambda > 0.
\end{equation}

We will also make use of the bilinear forms in $H^1(\Omega)$ and the inner product in $L^2(\Omega)$ as
\begin{equation*}
\begin{aligned}
a_{\Omega} (u,v)&=\int_{\Omega}{a_{ij}D_iuD_jv}\dx,  &&a^\lambda_{\Omega}(u,v)=\int_{\Omega}{a_{ij}(\lambda^{\frac{1}{n}}x)D_iuD_jv}\dx, \quad \lambda >0\\
q_{\Omega} (u,v)&=\int_{\Omega}{q_{ij}D_iuD_jv}\dx, &&\langle u,v \rangle _\Omega = \int_{\Omega}uv\dx,
\end{aligned}
\end{equation*}
where $q_{ij}$ are the constant coefficients of the homogenized operator. We omit the set $\Omega$  in the notation if $\Omega = \mathbb{R}^n$.

\section{Preliminaries}
\subsection{Notion of solutions}
\subsubsection{Weak solutions}
\label{sec:weak-sol}
As for the classical one-phase Stefan problem, we will define the weak solutions of \eqref{Stefan} using the corresponding variational problem given in \cite{FK,K3}.
Let $B=B_R(0)$, $D=B \setminus K$ for some fixed $R \gg 1$.  Find $u \in L^2(0,T; H^2(D))$ such that $u_t \in L^2(0,T;L^2(D))$ and
\begin{equation}
\label{Variational problem}
\left\{\begin{aligned}
u(\cdot,t) &\in \mathcal{K}(t), && 0 < t < T,\\
(u_t-\mathcal{L}u)(\varphi -u) &\geq f(\varphi -u), &&\text{ a.e } (x,t) \in D \times (0,T) \text{ for any } \varphi \in \mathcal{K}(t),\\
u(x,0)&=0 \text{ in } D,\\
\end{aligned}\right.
\end{equation}
where the admissible set $\mathcal{K}(t)$ is
$$\mathcal{K}(t)=\{\varphi \in H^1(D),\varphi \geq 0, \varphi =0 \text{ on } \partial B, \varphi =t \text{ on } K \}$$
and
\begin{equation*}
f(x):= \left \{
\begin{aligned}
& v_0(x) && \text{for } x\in \Omega_0,\\
&-\frac{1}{g(x)} && \text{for } x \in \Omega_0^\compl.
\end{aligned}
\right.
\end{equation*}
We use the standard notation $H^k$ and $W^{k,p}$ for Sobolev spaces.

Following \cite{FK}, if $v(x,t)$ is a classical solution of the Stefan problem  (\ref{Stefan}) in $D \times (0,T)$ and $R$ is sufficiently large depending on $T$, then the function $u(x,t):=\int_{0}^{t}v(x,s)\ds$ solves \eqref{Variational problem}.
On the other hand, it was shown in \cite{FK,R1} that the variational problem \eqref{Variational problem} is well-posed for initial data $v_0$ satisfying \eqref{initial data}.
\begin{theorem}[Existence and uniqueness for the variational problem]
	If $v_0$ satisfies (\ref{initial data}), then the problem (\ref{Variational problem}) has a unique solution satisfying
	$$
	\begin{aligned}
	u &\in L^\infty(0,T; W^{2,p}(D)), \qquad 1\leq p \leq \infty,\\
	u_t &\in L^\infty(D \times (0,T)),\\
	\end{aligned}
	$$
	and
	$$
	\left\{\begin{aligned}
	u_t-\mathcal{L} u &\geq f, &&u \geq 0 ,\\
	u(u_t- \mathcal{L} u-f)&=0
	\end{aligned}\right.
	\mbox{ a.e in } D\times (0, \infty).
	$$
\end{theorem}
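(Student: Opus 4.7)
The plan is to follow the classical penalization scheme for parabolic variational inequalities, in the spirit of Friedman--Kinderlehrer \cite{FK} and Rodrigues \cite{R1}. I would introduce a smooth penalty $\beta_\epsilon:\mathbb{R}\to\mathbb{R}$ with $\beta_\epsilon'\geq 0$, $\beta_\epsilon\equiv 0$ on $[0,\infty)$, and $\beta_\epsilon(s)\to-\infty$ for $s<0$ as $\epsilon\to 0$, and solve the regularized quasilinear parabolic problem
\begin{equation*}
u^\epsilon_t-\mathcal{L}u^\epsilon+\beta_\epsilon(u^\epsilon)=f \text{ in } D\times(0,T), \quad u^\epsilon=t \text{ on } K, \quad u^\epsilon=0 \text{ on } \partial B, \quad u^\epsilon(\cdot,0)=0.
\end{equation*}
Since $f$ is bounded and $\mathcal{L}$ is uniformly elliptic with Lipschitz coefficients, standard theory gives a unique smooth $u^\epsilon$; the goal is then to derive $\epsilon$-uniform estimates and pass to the limit.

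The key estimate is an $L^\infty$ bound on $w^\epsilon := u^\epsilon_t$. Differentiating the penalized PDE in time shows that $w^\epsilon_t-\mathcal{L}w^\epsilon+\beta_\epsilon'(u^\epsilon)w^\epsilon=0$, and since $\beta_\epsilon'\geq 0$ the maximum principle reduces the bound to parabolic-boundary data. The Dirichlet data on $K$ and $\partial B$ are time-independent (giving $u^\epsilon_t=1$ on $K$ and $u^\epsilon_t=0$ on $\partial B$), and the hypothesis $v_0\in C^2(\overline{\Omega_0\setminus K})$ with $|Dv_0|\neq 0$ on $\partial\Omega_0$ in \eqref{initial data} provides compatible Cauchy data ensuring $u^\epsilon_t(\cdot,0)$ is bounded uniformly in $\epsilon$; this is precisely the point at which the regularity assumed on $v_0$ is consumed. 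Combined with standard energy estimates producing uniform $L^\infty$ and $H^1$ bounds, I would extract a weak limit $u$. The sign of $\beta_\epsilon$ forces $u\geq 0$ a.e., and testing the penalized equation with $\varphi-u^\epsilon$ for $\varphi\in\mathcal{K}(t)$ yields the variational inequality in the limit after noticing $\beta_\epsilon(u^\epsilon)(\varphi-u^\epsilon)\geq\beta_\epsilon(u^\epsilon)\varphi\geq 0$.

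With $u_t\in L^\infty$ in hand, I would upgrade spatial regularity via the Lewy--Stampacchia-type bound $0\leq u_t-\mathcal{L}u-f\leq (-f)_+$, which gives $\mathcal{L}u\in L^\infty(D\times(0,T))$. Applying Calder\'on--Zygmund theory for divergence-form operators with Lipschitz coefficients, valid up to the boundary thanks to $\partial K\in C^{1,1}$, then yields $u(\cdot,t)\in W^{2,p}(D)$ uniformly in $t$ for every $p\in[1,\infty]$. The complementarity system $u\geq 0$, $u_t-\mathcal{L}u\geq f$, $u(u_t-\mathcal{L}u-f)=0$ a.e.\ follows directly from the Lewy--Stampacchia inequality and the non-negativity of $u$. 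Uniqueness is obtained by the standard duality trick: testing the inequality for $u_1$ with $\varphi=u_2$ and vice versa, subtracting, and using the ellipticity of $\mathcal{L}$ gives $\tfrac{d}{dt}\|u_1-u_2\|_{L^2(D)}^2\leq 0$ with vanishing initial datum. The main obstacle throughout is the $L^\infty$ estimate on $u_t$, which drives the entire regularity chain and explains why the statement requires \eqref{initial data} rather than merely continuous initial data; as the preceding discussion notes, relaxing this regularity is possible but only through an approximation argument from below and above by data satisfying \eqref{initial data}.
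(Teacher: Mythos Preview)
The paper does not give its own proof of this theorem; it is stated as a known result with the attribution ``it was shown in \cite{FK,R1} that the variational problem \eqref{Variational problem} is well-posed for initial data $v_0$ satisfying \eqref{initial data}.'' Your proposal is precisely the penalization scheme of those references, so in that sense you have reconstructed the cited argument rather than diverged from it.

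Two small remarks on the sketch itself. First, the initial datum for the penalized problem is $u^\epsilon(\cdot,0)=0$, so $u^\epsilon_t(\cdot,0)$ is read off from the equation as $f-\beta_\epsilon(0)=f$, which is bounded but may be negative on $\Omega_0^\compl$; the role of \eqref{initial data} is rather to ensure compatibility at the corner $\partial K\times\{0\}$ and the regularity needed to justify differentiating in $t$, not to make $u^\epsilon_t(\cdot,0)$ nonnegative. The nonnegativity $u_t\ge 0$ comes afterwards, in the limit, from the complementarity structure. Second, the claim $u\in L^\infty(0,T;W^{2,p}(D))$ including $p=\infty$ does not follow from Calder\'on--Zygmund alone; with merely Lipschitz $a_{ij}$ one gets $W^{2,p}$ for every finite $p$, and the $p=\infty$ endpoint requires an additional argument (or is simply a loose formulation inherited from the statement). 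Neither point affects the overall correctness of your outline, which matches the approach the paper defers to.
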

Thus we define the \emph{weak solution} of the Stefan problem (\ref{Stefan}) as the time derivative $u_t$ of the solution $u$ of (\ref{Variational problem}). Note that as in \cite[Lemma~3.6]{K3}, the solution does not depend on the choice of $R$ if $R$ is sufficiently large.

We list here some useful properties of the weak solutions for later use, see \cite{FK,R1,K3}.
\begin{prop}
	\label{boundedness of weak solution}
	The unique solution $u$ of (\ref{Variational problem}) satisfies $u_t \in C(\overline D \times [0, T])$ and
	$$0 \leq u_t \leq C \qquad \mbox{ a.e. } D \times (0,T),$$
	where $C$ is a constant depending on $f$. In particular, $u$ is Lipschitz with respect to $t$ and
	$u$ is $C^{\alpha}(D)$ with respect to $x$ for all $\alpha \in (0,1)$. Furthermore, if $0 \leq t
	<s \leq T$, then $u(\cdot,t) < u(\cdot,s)$ in $\Omega_s(u)$ and also
	$\Omega_0 \subset \Omega_t(u) \subset \Omega_s(u)$.
\end{prop}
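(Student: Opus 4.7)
The plan is to derive all stated properties essentially from the structure of the obstacle problem~\eqref{Variational problem} together with standard parabolic regularity, following the blueprint of \cite{FK,R1,K3}. I would organize the proof into four stages: (i) nonnegativity and $L^\infty$-bounds for $u_t$, (ii) continuity of $u_t$ up to the closure, (iii) the Hölder and Lipschitz regularity claims, and (iv) the strict monotonicity and the inclusions on positive sets.

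For (i), I would first exploit the monotonicity of the obstacle in time: the admissible set $\mathcal{K}(t)$ is increasing in $t$ because only the value on $K$ is raised from $t$ to $s > t$. Given the solution $u(\cdot,t)$, the function $u(\cdot,t)+(s-t)\chi_K$-type modification is an admissible competitor at time $s$ (after a harmonic replacement inside $D$ if needed), which via the variational inequality forces $u(\cdot,s)\ge u(\cdot,t)$; dividing by $s-t$ gives $u_t\ge 0$ a.e. The upper bound $u_t\le C$ is obtained by comparing with the solution $w$ of the stationary obstacle problem whose boundary datum equals $T$ on $K$ and $0$ on $\partial B$, with obstacle $0$ and right-hand side $f$; by the comparison principle for obstacle problems $u(\cdot,t)\le w$ uniformly, and then by the penalization approach (or by bounding $u_t$ at the initial time via the choice of $v_0$ and propagating the $L^\infty$-bound using the equation satisfied where $u>0$) one gets a pointwise bound for $u_t$ depending only on $f$, $\partial K$, and $R$.

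For (ii), continuity of $u_t$ on $\overline D\times[0,T]$ comes from the fact that $v:=u_t$ solves, in a weak/viscosity sense, the parabolic problem~\eqref{Stefan} in $\{u>0\}$ with bounded data and Dirichlet value $1$ on $K$, combined with Caffarelli-type regularity for the one-phase Stefan/obstacle problem; I would quote the relevant statements from \cite{FK,R1,K3} rather than redo the regularity theory. Stage (iii) is then immediate: the uniform bound on $u_t$ gives Lipschitz regularity in $t$, and elliptic De Giorgi-Nash-Moser regularity applied to the time-sliced equation $-\mathcal{L}u(\cdot,t)=f-u_t\in L^\infty$ with the admissible boundary data gives $C^\alpha$ regularity in $x$ for every $\alpha\in(0,1)$; inside $\Omega_t(u)$ one even gets $W^{2,p}$.

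Stage (iv) is where the main care is needed. The inclusion $\Omega_0\subset \Omega_t(u)$ follows because on $\Omega_0$ we have $f=v_0>0$, so $u_t\ge f >0$ there as soon as $u=0$ would force a contradiction with the obstacle inequality $u_t-\mathcal{L}u\ge f$; more precisely, if $u(x_0,t_0)=0$ at an interior point of $\Omega_0$, then in a neighborhood $u\equiv 0$ forces $0\ge f>0$, contradiction. The monotonicity $\Omega_t(u)\subset\Omega_s(u)$ follows from $u_t\ge 0$. The strict inequality $u(\cdot,t)<u(\cdot,s)$ on $\Omega_s(u)$ is the main obstacle: I would argue that on the open set $\{u(\cdot,s)>0\}$, the function $w(x,\tau):=u(x,\tau+t)-u(x,t)$ is nonnegative, satisfies $w_\tau-\mathcal{L}w = 0$ in a neighborhood in spacetime where both times sit in the positivity set (so the complementarity condition gives the equation rather than the inequality), and is not identically zero at $\tau=s-t$ since $u(\cdot,s)>0$ while $u(x_0,t)$ would otherwise have to vanish at a point of $\Omega_s(u)$; the parabolic strong minimum principle then upgrades $w\ge 0$ to $w>0$. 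Alternatively, one can invoke the Hopf-type results already recorded in \cite{FK,K3}. I would present this final step carefully and refer to the cited literature for the technical background.
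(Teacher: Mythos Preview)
Your overall organization matches the paper's: both treat parts (i), (iii), (iv) as standard consequences of the obstacle-problem structure and elliptic/parabolic regularity, citing \cite{FK,R1,K3}, and both single out the continuity of $u_t$ as the point that needs comment. Your arguments for the bounds on $u_t$, the Lipschitz/H\"older regularity, and the monotonicity of positivity sets are essentially the standard ones the paper has in mind.

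There is, however, a genuine gap in your stage (ii). You propose to obtain $u_t\in C(\overline D\times[0,T])$ by quoting ``Caffarelli-type regularity for the one-phase Stefan/obstacle problem'' from \cite{FK,R1,K3}. The paper explicitly flags that this is \emph{not} sufficient in the present anisotropic setting: the Caffarelli--Friedman continuity result is for the Laplacian (or, after a linear change of variables, for constant coefficients), and \cite{FK,R1,K3} do not provide the continuity of the temperature for variable $a_{ij}(x)$. What is actually needed here is the continuity theory for weak solutions of \emph{singular} parabolic equations in divergence form with merely bounded measurable (here Lipschitz) coefficients, due to Di~Benedetto, Ziemer, and Sacks. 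The paper invokes precisely these references, after first checking that the hypotheses there---an $L^\infty$ bound on $v=u_t$ and $L^2$ bounds on its derivatives---are supplied by the obstacle-problem estimates (your stage (i) together with \cite[Theorem~3]{FK} or \cite{F}). So your plan is right in spirit, but the citation you give would not close the argument; you should replace the appeal to \cite{FK,R1,K3} for continuity by the Di~Benedetto/Ziemer/Sacks theory and verify its hypotheses.
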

\begin{proof}
Let us only give a remark on $u_t \in C(\overline D \times [0, T])$, the rest is standard following the arguments in the cited papers and the elliptic regularity theory.  The regularity of weak solutions and their free boundaries was studied by many authors, see \cite{C,CF,FN, Di-B, Ziemer, Sacks}. If $a_{ij}=\delta_{ij}$, the temperature $u_t$ is continuous in $\Rn \times [0,\infty)$ due to the result of Caffarelli and Friedman \cite{CF}. By a change of coordinates, the continuity of $u_t$ can also be obtained when the coefficients are constants. Using a different approach for more general singular parabolic equations, Di Benedetto \cite{Di-B}, Ziemer \cite{Ziemer} and Sacks \cite{Sacks} showed that the continuity also holds in the case $a_{ij}=a_{ij}(x)$ satisfying \eqref{ellipticity}. Note that the assumptions on the $L^\infty$-bound of the weak solution $v=u_t$ and the $L^2$-bounds of its derivatives as in \cite{Ziemer, Di-B, Sacks} are guaranteed by Proposition~\ref{boundedness of weak solution} above and \cite[Theorem 3]{FK} or \cite[Corollary 2, Theorem 4]{F}.
\end{proof}

\begin{lemma}[Comparison principle for weak solutions]
  Suppose that $f \leq \hat{f}$. Let $u, \hat{u}$ be solutions of (\ref{Variational problem}) for respective $f, \hat{f}$. Then $u \leq \hat{u}$ and
	$$v \equiv u_t \leq \hat{u}_t \equiv \hat{v}.$$
\end{lemma}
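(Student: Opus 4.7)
The proof plan splits cleanly into establishing $u \leq \hat u$ first, then upgrading this to the time-derivative inequality $v \leq \hat v$.

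For $u \leq \hat u$ I would run the classical obstacle-problem comparison. The crucial lattice observation is that $\mathcal{K}(t)$ is stable under both $\min$ and $\max$, because its defining constraints $\varphi \geq 0$, $\varphi|_{\partial B} = 0$, $\varphi|_K = t$ are all preserved by these operations. Setting $w := (u - \hat u)^+$, both $\min(u, \hat u) = u - w$ and $\max(u, \hat u) = \hat u + w$ lie in $\mathcal{K}(t)$. Testing the variational inequality for $u$ with $\varphi = u - w$ and the one for $\hat u$ with $\varphi = \hat u + w$, then subtracting and using $f \leq \hat f$ together with $w \geq 0$, yields the pointwise a.e.\ estimate
\begin{equation*}
\bigl((u - \hat u)_t - \mathcal{L}(u - \hat u)\bigr)\, w \leq (f - \hat f)\, w \leq 0.
\end{equation*}
Integrating over $D$, the time term equals $\tfrac12 \tfrac{d}{dt} \|w(\cdot,t)\|_{L^2(D)}^2$, while an integration by parts (using $w = 0$ on $\partial D$) converts the elliptic contribution into $a_D(u - \hat u, w) = a_D(w, w) \geq \alpha \|Dw\|_{L^2(D)}^2 \geq 0$, since $Dw = D(u - \hat u)\,\mathbf{1}_{\{u > \hat u\}}$ a.e. This gives $\tfrac{d}{dt} \|w(\cdot,t)\|_{L^2(D)}^2 \leq 0$, and the initial condition $w(\cdot, 0) = 0$ forces $w \equiv 0$, i.e.\ $u \leq \hat u$.

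For the temperature inequality $v \leq \hat v$, the monotonicity $u \leq \hat u$ alone does not imply a pointwise inequality for the time derivatives. My plan is to run the same $\min/\max$ comparison at the level of an implicit Euler (Rothe) time discretization of \eqref{Variational problem}: at each step $t_k = kh$ one solves an elliptic obstacle problem for $u^k \in \mathcal{K}(t_k)$ with source term $u^{k-1}/h + f$. Testing simultaneously with $\min(u^k, \hat u^k)$ and $\max(u^k, \hat u^k)$ and carrying a strengthened induction hypothesis that $\hat u^k - u^k$ is nondecreasing in $k$ starting from $u^0 = \hat u^0 = 0$, one obtains both $u^k \leq \hat u^k$ and $(u^k - u^{k-1})/h \leq (\hat u^k - \hat u^{k-1})/h$ at each step. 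Passing $h \to 0^+$ and using the uniform $L^\infty$ bound of Proposition~\ref{boundedness of weak solution} together with the standard Rothe convergence theory transfers the discrete inequality to $u_t \leq \hat u_t$ a.e.

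The main obstacle lies precisely in this second step: one needs a coupled induction that carries forward both orderings together, and a strong-enough convergence of the discrete time derivatives so that an a.e.\ pointwise inequality survives in the limit. This is standard in the Stefan literature, and can alternatively be deduced directly from the classical comparison principle for weak solutions of the Stefan problem recorded in \cite{FK,R1,K3}, which I would invoke as a shortcut if a self-contained argument is not required.
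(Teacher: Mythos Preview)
The paper does not actually prove this lemma: it is stated as part of a list of ``useful properties of the weak solutions'' with a blanket citation to \cite{FK,R1,K3}, and no argument is given. Your final sentence---invoking the classical comparison principle recorded in \cite{FK,R1,K3} as a shortcut---is therefore exactly what the paper does.

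Your sketch goes further than the paper. The argument for $u \leq \hat u$ via the lattice structure of $\mathcal{K}(t)$ and testing with $u - w$, $\hat u + w$ is correct and standard. For the temperature inequality, your Rothe discretization outline is plausible but the ``strengthened induction hypothesis that $\hat u^k - u^k$ is nondecreasing in $k$'' is asserted rather than proved; showing this requires a careful elliptic comparison at each step that uses both $f \leq \hat f$ and the previously established ordering of $u^{k-1}, \hat u^{k-1}$, and it is not obvious that monotonicity of the \emph{difference} follows. You correctly flag this as the main obstacle. An alternative route, closer to what is done in \cite{FK,R1} and in \cite[Proposition~2.2]{QV}, is to work directly with the enthalpy formulation and apply a parabolic comparison (Hilpert-type) argument to $v$, avoiding the discretization entirely. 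Either way, since the paper simply cites the literature, your proposal is already at least as complete as the paper's own treatment.
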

\subsubsection{Viscosity solutions}
\label{sec: viscosity sol}
Generalized solutions of the Stefan problem \eqref{Stefan} can also be defined via the comparison principle, leading to the viscosity solutions introduced in \cite{K1}. In the following, $Q$ is the space-time cylinder $Q:=(\mathbb{R}^n \setminus K) \times [0,\infty)$.
\begin{definition}
	\label{def of viscos.subsol}
	A nonnegative upper semicontinuous function $v=
	v(x,t)$ defined in $Q$ is a viscosity subsolution of (\ref{Stefan}) if:
	\begin{enumerate}[label=\alph*)]
		\item \label{continuous expanding} For all $T \in (0,\infty)$, $\overline{\Omega (v)} \cap \{t \leq T\} \cap Q \subset \overline{\Omega (v) \cap \{t < T\}}.$
		\item For every $\phi \in C^{2,1}_{x,t}(Q)$ such that $v-\phi$ has a local maximum in $\overline{\Omega(v)} \cap \{t\leq t_0\} \cap Q$ at $(x_0,t_0)$, the following holds:
		\begin{enumerate}[label=\roman*)]
			\item If $v(x_0,t_0)>0$, then $(\phi_t- \mathcal{L} \phi )(x_0,t_0) \leq 0$.
			\item If $(x_0,t_0) \in \Gamma(v), |D\phi (x_0,t_0)| \neq 0$ and $(\phi_t-\mathcal{L} \phi )(x_0,t_0)>0$, then
			\begin{equation}
			(\phi_t-ga_{ij}D_j\phi \nu_i |D\phi|)(x_0,t_0) \leq 0,
			\end{equation}
			where $\nu$ is inward spatial unit normal vector of $\p{\{v>0\}}$.
		\end{enumerate}
	\end{enumerate}
		Analogously, a nonnegative lower semicontinuous function $v(x,t)$ defined in $Q$ is a viscosity
	supersolution if (b) holds with maximum replaced by minimum, and with inequalities reversed in
	the tests for $\phi$ in (i--ii). We do not need to require (a).
\end{definition}
\begin{remark}
	As in \cite[Remark~2.4]{Pozar15}, the condition \ref{continuous expanding} guarantees the continuous expansion of the support of the subsolution $v$, which prevents ``bubbles'' closing up, that is, it prevents $v$ becoming instantly positive in the whole space or in an open set surrounded by a positive phase.
\end{remark}

\begin{definition}
	\label{def vis.subsol with data}
	A viscosity subsolution of (\ref{Stefan}) in $Q$ is a viscosity subsolution of (\ref{Stefan}) in $Q$ with initial data $v_0$ and boundary data $1$ if:
	\begin{enumerate}[label=\alph*)]
		\item $v$ is upper semicontinuous in $\bar Q, v=v_0$ at $t=0$ and $v \leq 1$ on $\Gamma$,
		\item $\overline{\Omega(v)}\cap \{t=0\}=\overline{\{x: v_0(x)>0\}} \times \{0\}$.
	\end{enumerate}
	A viscosity supersolution is defined analogously by requiring (a) with $v$ lower semicontinuous
	and $v \geq 1$ on $\Gamma$. We do not need to require (b).
\end{definition}

A viscosity solution is both a subsolution and a supersolution:
\begin{definition}
	\label{def vis.sol with data}
	The function $v(x,t)$ is a viscosity solution of (\ref{Stefan}) in $Q$ (with initial data $v_0$ and boundary data $1$) if $v$ is a viscosity supersolution and $v^\star$ is a viscosity subsolution of (\ref{Stefan}) in $Q$ (with initial data $v_0$ and boundary data $1$).
	Here $v^\star$ is the upper semicontinuous envelopes of  $v$ define by
	\begin{align*}
	w^\star(x,t) := \limsup_{(y,s) \rightarrow (x,t)}w(y,s).
	\end{align*}
\end{definition}
The notion of viscosity solutions of the classical Stefan problem was first introduced in \cite{K1}. It was generalized to the problem \eqref{Stefan} in \cite{K3} including a
comparison principle for ``strictly separated'' initial data. More importantly,  in \cite{K3} the authors proved the coincidence of weak
and viscosity solutions which will be used as a crucial tool in our work.
\begin{theorem}[cf. {\cite[Theorem 3.1]{K3}}]
	Assume that $v_0$ satisfies (\ref{initial data}). Let $u(x,t)$ be the unique solution of (\ref{Variational problem}) in $B \times [0,T]$ and let $v(x,t)$ be the solution of
	\begin{equation}
	\label{coincidence eq}
	\left\{
	\begin{aligned}
	v_t-\mathcal{L} v &=0 && \mbox{in } \Omega(u) \setminus K,\\
	v &=0 && \mbox{on } \Gamma(u),\\
	v &=1 &&\mbox{in } K,\\
	v(x,0) &=v_0(x).
	\end{aligned}
	\right.
	\end{equation}
	Then $v(x,t)$ is a viscosity solution of (\ref{Stefan}) in $B \times [0,T]$ with initial data
	$v(x,0)=v_0(x)$, and
	$u(x,t)= \int_{0}^{t}v(x,s)\ds$.
\end{theorem}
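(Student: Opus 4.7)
The plan is to follow the two-step strategy of Kim and Mellet. First, I would show that $w:=u_t$ coincides with the solution $v$ of the Dirichlet problem \eqref{coincidence eq}, from which the integral identity $u(x,t)=\int_0^t v(x,s)\ds$ follows immediately by integration in $t$. Second, I would verify that $v$ fulfills the viscosity sub- and supersolution requirements of Definitions \ref{def of viscos.subsol}--\ref{def vis.sol with data}, where the only delicate part is the free boundary velocity condition.

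For the identification $w=v$, Proposition \ref{boundedness of weak solution} already provides that $w\in C(\overline{D}\times[0,T])$, $w\geq 0$, and that $\Omega_t(u)$ expands monotonically in $t$. Starting from the a.e.\ complementarity $u(u_t-\mathcal{L}u-f)=0$, differentiating in $t$ in the open set $\Omega(u)\setminus K$ (where $u_t-\mathcal{L}u=f$ with $f$ time-independent) yields $w_t-\mathcal{L}w=0$ there. The Dirichlet datum on $K$ follows from $u(x,t)=t$, while on $\Gamma(u)\cap\{t>0\}$ the strict expansion of $\Omega_t(u)$ forces $u\equiv 0$ on a one-sided time neighborhood of each free boundary point, so the continuous $w$ must vanish on $\Gamma(u)$. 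The initial trace $w(\cdot,0)=v_0$ is recovered by the expansion $u(x,t)=f(x)t+o(t)$ for $x\in\Omega_0$ and by noting that $u\equiv 0$ near $(x,0)$ for $x$ outside $\overline{\Omega_0}$. Hence $w$ and $v$ solve the same Dirichlet problem \eqref{coincidence eq}, and parabolic uniqueness yields $w=v$.

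For the viscosity verification, the interior equation in $\Omega(v)\setminus K$ holds by construction, the Dirichlet and initial data are built into \eqref{coincidence eq}, and the continuous expansion property \ref{continuous expanding} follows from $\Omega_s(u)\subset \Omega_t(u)$ for $s<t$. The crux is the free boundary velocity condition. Suppose $\phi\in C^{2,1}_{x,t}$ touches $v^\star$ from above at $(x_0,t_0)\in\Gamma(v)$ with $|D\phi(x_0,t_0)|\neq 0$ and $(\phi_t-\mathcal{L}\phi)(x_0,t_0)>0$; I would argue by contradiction that $\phi_t\leq g\,a_{ij}D_j\phi\,\nu_i|D\phi|$ at that point. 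If the opposite strict inequality held, then a small smooth perturbation of $\phi$ would be a strict classical supersolution of \eqref{Stefan} in a space-time neighborhood of $(x_0,t_0)$ whose free boundary propagates strictly faster than $\Gamma(v)$; integrating this barrier in time and using a local comparison principle for the variational inequality \eqref{Variational problem}, one would deduce that $u$ is dominated by the integrated barrier near $(x_0,t_0)$, which together with $u=\int_0^t v\ds$ contradicts the hypothesis that $\phi$ touches $v^\star$ from above at $(x_0,t_0)$. The supersolution case is symmetric, with maxima replaced by minima and inequalities reversed.

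The main obstacle is precisely this last step: transferring a pointwise differential inequality for a test function touching $v=u_t$ into a global comparison for the integrated variational inequality. The difficulties are that $\phi$ need not respect the variational structure (so perturbations and cut-off functions are required to produce an admissible local supersolution), and that the comparison principle for the obstacle problem has to be localized to the relevant parabolic neighborhood without losing control on the boundary data. Aside from this, the remaining verifications are standard parabolic regularity considerations already summarized in the proof of Proposition \ref{boundedness of weak solution}, and the coincidence of free boundaries $\Gamma(v)=\Gamma(u)$ follows automatically from $v=u_t$ and the strict monotonicity in $t$ of $u$ inside $\Omega(u)$.
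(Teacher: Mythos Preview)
The paper does not supply its own proof of this theorem; it is stated with the citation ``cf.\ \cite[Theorem 3.1]{K3}'' and immediately followed by the comparison-principle corollary, so the reference to Kim--Mellet is the entire argument. Your proposal is a faithful reconstruction of the strategy actually carried out in \cite{K3}: first identify $u_t$ with the Perron solution $v$ of the Dirichlet problem on $\Omega(u)$, then verify the viscosity free-boundary conditions by a local barrier/comparison argument. In that sense your approach is the same as the one the paper invokes.

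One point worth sharpening: in \cite{K3} the free-boundary test is not handled by integrating a perturbed $\phi$ in time and comparing at the level of the obstacle problem, as you suggest, but rather by constructing a local radial (or, in the anisotropic case, $h$-based) barrier for the Stefan problem itself and comparing it directly with $v$ via the comparison principle for viscosity solutions on a small cylinder. Your integrated-in-time route is plausible but, as you correctly flag, the localization of the variational comparison with an arbitrary $C^{2,1}$ test function is delicate; the Kim--Mellet argument sidesteps this by staying at the level of $v$ and using explicit barriers whose free-boundary speed is controlled pointwise. If you were to write out the proof, that is the cleaner path.
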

By the coincidence of weak and viscosity solutions, we have a more general comparison principle as follows.
\begin{lemma}[cf. {\cite[Corollary 3.12]{K3}}]
	Let $v^1$ and $v^2$  be, respectively, a viscosity subsolution and supersolution of the Stefan problem (\ref{Stefan}) with continuous initial data $v_0^1 \leq v_0^2$ and boundary data $1$. In addition, suppose that $v_0^1$ (or $v_0^2$) satisfies condition (\ref{initial data}). Then
	$v^1_\star \leq v^2  \mbox{ and }  v^1 \leq (v^2)^\star \mbox{ in }  \mathbb{R}^n \setminus K
	\times [0,\infty).$
\end{lemma}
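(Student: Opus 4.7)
My plan is to deduce this lemma from the strict-separation comparison principle of \cite{K3} together with the coincidence theorem just stated, bridging through an approximation of the (regular) initial datum. Assume without loss of generality that $v_0^1$ satisfies \eqref{initial data}, and let $w$ be the viscosity solution with initial data $v_0^1$; by the preceding theorem $w$ equals $u_t$ for the variational solution $u$, and by Proposition~\ref{boundedness of weak solution} $w$ is continuous on $\overline{\Omega(w)}$. I would use $w$ as a pivot and establish both $v^1 \leq w^\star = w$ and $w = w_\star \leq v^2$, after which $v^1_\star \leq w \leq v^2$ and $v^1 \leq w \leq (v^2)^\star$ will follow immediately.

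For the first inequality, I would approximate $v_0^1$ from above by a sequence $v_0^{+\epsilon}$ still satisfying \eqref{initial data} and strictly above $v_0^1$ in the precise sense needed (ordering of positive phases together with strict inequality on the overlap), achievable by slightly enlarging $\Omega_0$ and scaling the profile up, using the non-degeneracy $|Dv_0|\neq 0$ on $\partial\Omega_0$. The corresponding pivot solutions $w^{+\epsilon}$ obtained from the coincidence theorem are viscosity solutions whose initial data are strictly separated from those of the subsolution $v^1$, so the comparison principle for strictly separated viscosity solutions in \cite{K3} gives $v^1 \leq w^{+\epsilon}$ on $(\mathbb{R}^n \setminus K) \times [0,\infty)$. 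Continuous dependence of the obstacle-problem solution on the source $f$ defined in \eqref{f} then yields $w^{+\epsilon} \to w$ locally uniformly as $\epsilon \to 0$, whence $v^1 \leq w$. The same procedure with $v_0^{-\epsilon}$ satisfying \eqref{initial data} and strictly below $v_0^2$ produces $w^{-\epsilon} \leq v^2$, and passing to the limit gives $w \leq v^2$.

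The main obstacle is the construction of the strict-separation approximants $v_0^{\pm\epsilon}$ and the uniform control needed for the $\epsilon$-limit. Two delicate points arise: first, the perturbed data must still satisfy \eqref{initial data} so the coincidence theorem applies and the pivot $w^{\pm\epsilon}$ is a bona fide continuous viscosity solution; second, the ``strict separation'' hypothesis of the comparison theorem in \cite{K3} involves the positive phases and the free boundaries simultaneously, and must be produced in a quantitatively robust form so that the orderings survive taking semicontinuous envelopes and limits. The hypothesis $|Dv_0|\neq 0$ on $\partial\Omega_0$ in \eqref{initial data} is exactly what makes this possible: a small vertical shift of $v_0^1$ translates into a uniform normal displacement of the free boundary, which supplies precisely the spatial separation needed to invoke the strict comparison principle and then, via local uniform convergence of $w^{\pm\epsilon}$, to conclude in the limit.
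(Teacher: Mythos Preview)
The paper does not supply its own proof of this lemma; it is stated with the attribution ``cf.\ \cite[Corollary~3.12]{K3}'' and no argument follows. Your proposal---using the coincidence theorem to produce a continuous pivot solution $w$ with the regular datum, perturbing that datum to manufacture the strict separation required by the comparison theorem in \cite{K3}, and then passing to the limit via continuous dependence of the obstacle problem---is precisely the standard route and is in all likelihood the argument carried out in \cite{K3} itself. There is nothing to compare against in the present paper.
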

\begin{remark}
We first note that a classical subsolution (supersolution) of (\ref{Stefan}) is also a viscosity subsolution (supersolution) of \eqref{Stefan} in $Q$ with initial data $v_0$ and boundary data $1$ by standard arguments.

Moreover, if $\Omega(u)$ is not smooth, we need to understand the solution of (\ref{coincidence eq}) as the one given by Perron's method as
	\begin{equation*}
	v=\sup \{w\mid w_t- \Delta w \leq 0 \mbox{ in } \Omega(u), w\leq 0 \mbox{ on } \Gamma(u), w\leq 1 \mbox{ in } K, w(x,0) \leq v_0(x)\},
	\end{equation*}
	which allows $v$ to be discontinuous on $\Gamma(u)$.
\end{remark}
\subsection{The fundamental solution of a linear elliptic equation}
In this section, we will recall some important facts about the fundamental solution of the self-adjoint uniformly elliptic second order linear equation of divergence form
\begin{equation}
\label{elliptic eq}
\begin{aligned}
&-\mathcal{L}u=0,
\end{aligned}
\end{equation}
in dimension $n \geq 3$, where $\mathcal{L}$ was defined in \eqref{operator-L} and $a_{ij}(x)$ satisfy \eqref{ellipticity} and \eqref{condition in media}. This fundamental solution will be used to construct barriers for the Stefan problem \eqref{Stefan}.

We define the fundamental solution of \eqref{elliptic eq} as Green's function in the whole space following \cite{LSW, ABL}.
\begin{definition}
	We say that $G: \Rn \times \Rn \to \mathbb{R}$ is the fundamental solution (Green's function) of \eqref{elliptic eq} if $G(\cdot,y)$ is the weak (distributional) solution of $-\mathcal{L}G(\cdot,y)= \delta_y$, where $\delta_y$ is the Dirac measure at $y$, i.e.,
	\begin{equation*}
	\begin{aligned}
	&\int_{\Rn}a_{ij}D_jG(\cdot,y) D_i\varphi \dx= \varphi(y), && \forall y \in \Rn, &&&\forall \varphi \in C^\infty_0(\Rn),
	\end{aligned}
	\end{equation*}
	and $\lim_{|x-y|\to \infty}G(x,y)=0$.
\end{definition}
The existence and uniqueness of the fundamental solution were given by the remark following \cite[Corollary 7.1]{LSW} or more precisely by \cite[Theorem 1]{ABL}.
\begin{theorem}[cf. {\cite[Theorem 1]{ABL}}]
	Assume that $n \geq 3$, $a_{ij}(x)$ satisfy \eqref{ellipticity} and  \eqref{condition in media}. Then, there exists a unique fundamental solution $G$ of \eqref{elliptic eq} such that $G(\cdot, y) \in H^1_{loc}(\Rn \setminus \{y\}) \cap W^{1,p}_{loc}(\Rn), p <\frac{n}{n-1},$ and for some constant $C>0$ we have
	\begin{equation}
		\label{bound for fund.sol n>2}
	C^{-1}|x-y|^{2-n}\leq G(x,y) \leq C|x-y|^{2-n}, \quad \forall x,y \in \Rn.
	\end{equation}
\end{theorem}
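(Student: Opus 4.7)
The plan is to construct $G$ by exhaustion and establish the pointwise bounds via the De Giorgi--Nash--Moser theory, with uniqueness following from a Liouville-type argument. First I would fix $y \in \mathbb{R}^n$ and, for each $R>0$, consider the Dirichlet Green's function $G_R(\cdot,y)$ of $-\mathcal{L}$ on the ball $B_R(y)$. For its construction one can use the Stampacchia duality method: for each $f \in L^q(B_R(y))$ with $q>\frac{n}{2}$, solve $-\mathcal{L}u = f$ with zero boundary data and note that $u \in L^\infty$ with $\|u\|_{L^\infty} \leq C\|f\|_{L^q}$ by the standard $L^\infty$-estimate for divergence-form equations (which only uses the ellipticity \eqref{ellipticity}); then the dual map produces $G_R(\cdot,y) \in L^{q'}$ satisfying the distributional identity against test functions supported in $B_R(y)$.

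Next I would derive the uniform pointwise bounds. For the upper bound, a localized energy estimate of Littman--Stampacchia--Weinberger / Grüter--Widman type on dyadic annuli $A_r = B_{2r}(y) \setminus B_r(y)$, combined with Moser's local boundedness theorem applied to $G_R(\cdot,y)$ on $A_r$ (where $G_R$ is a bounded, nonnegative weak solution of $-\mathcal{L}u=0$), yields
\begin{equation*}
\sup_{A_r} G_R(\cdot,y) \leq \frac{C}{r^{n-2}},
\end{equation*}
with $C$ depending only on $n, \alpha, \beta$. For the lower bound I would apply the weak Harnack inequality on the same annulus and chain together a uniformly bounded number of balls to relate $G_R$ at arbitrary $x$ with $|x-y|=r$ to its spherical average on $\partial B_r(y)$; that average is bounded below by $c r^{2-n}$ by testing the defining identity against a suitable cut-off (essentially integrating $-\mathcal{L}G_R = \delta_y$ against the radial bump and using the co-area formula together with uniform ellipticity). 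The two bounds are $R$-independent, so via standard compactness (using the uniform $W^{1,p}_{loc}$ bound for $p<\frac{n}{n-1}$ obtained from the energy estimate) I can extract a limit $G(\cdot,y)$ as $R\to \infty$; the monotonicity of $G_R$ in $R$ (from the comparison principle) makes this limit canonical and preserves the pointwise inequalities.

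For uniqueness, suppose $G_1$ and $G_2$ are two fundamental solutions in the sense of the definition. Then $w := G_1(\cdot,y) - G_2(\cdot,y)$ is a distributional solution of $-\mathcal{L}w = 0$ on $\mathbb{R}^n \setminus\{y\}$; since both functions lie in $W^{1,p}_{loc}$ for some $p<\frac{n}{n-1}$ and share the same defect $\delta_y$, $w$ is in fact a weak solution on all of $\mathbb{R}^n$ (the singularity at $y$ is removable because $w \in L^1_{loc}$ and $\{y\}$ has zero $W^{1,2}$-capacity relative to distributional solutions of this order). The decay hypothesis $w(x) \to 0$ as $|x|\to \infty$ together with the De Giorgi--Nash interior Hölder estimate rescaled on large balls forces $w \equiv 0$ via the standard Liouville theorem for $\mathcal{L}$.

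The main obstacle is the proof of the lower bound $G(x,y) \geq C^{-1}|x-y|^{2-n}$, because, unlike the constant-coefficient case, there is no explicit formula and one cannot rely on a radial ansatz. The correct way around this is to combine the scale-invariant Harnack inequality on annuli (whose constant depends only on $n, \alpha, \beta$ and is therefore dilation-invariant on $\mathbb{R}^n$) with a quantitative capacitary lower bound for the mass $\int_{B_r(y)} -\mathcal{L}G(\cdot,y) = 1$, which forces a matching lower bound on some spherical mean; the Harnack chain then propagates this to every point at distance $r$ from $y$. Once that step is in place, all remaining ingredients are standard consequences of divergence-form elliptic regularity and do not require the periodicity or Lipschitz regularity of $a_{ij}$ (which enter only for later gradient estimates).
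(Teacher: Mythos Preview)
The paper does not give its own proof of this theorem; it is quoted as a result from \cite[Theorem~1]{ABL} (with the existence going back to \cite{LSW}), and no argument is supplied beyond the citation. Your outline is the standard Littman--Stampacchia--Weinberger / Gr\"uter--Widman construction and is exactly the route taken in those references, so there is nothing to compare against and your sketch is appropriate.
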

\begin{remark}
	Note that in any bounded domain $U$ of $\Rn \setminus \{0\}$, $G(\cdot,y)$ satisfies all the properties of a weak solution of a uniformly elliptic equation. The fundamental solution of \eqref{elliptic eq} also has the following properties (for more details, see \cite{LSW,M, GT }):
	\begin{itemize}
		\item $G(x,y)=G(y,x)$
		\item $G(\cdot,y) \in C^{1,\alpha}(U)$ for some $\alpha >0$.
		\item The function $u(x)= \int_{\Rn}G(x,y)f(y)\dy$
		is a weak solution in $H^1_{\text{loc}}(\Rn)$ of the equation $-\mathcal{L}u = f$
		for any $f \in C^\infty_0(\Rn)$.
		\item When the coefficients $a_{ij}$ are constants, the fundamental solution can be given explicitly as
		\begin{equation}
		\label{fundamental const coef}
    G^0(x,y):= \frac{1}{(n-2)\alpha_n\sqrt{\operatorname{det} A}}\left(\sum_{ij}(A^{-1})_{ij}(x_i-y_i)(x_j-y_j)\right)^{\frac{2-n}{n}},
		\end{equation}
    where $(A^{-1})_{ij}$ are the elements of the inverse matrix of $(a_{ij})$, $\operatorname{det} A$ is the determinant of $(a_{ij})$ and $\alpha_n$ is the volume of the unit ball in $\Rn$.
	\end{itemize}
\end{remark}

Moreover, in a periodic setting, the results in \cite[Proposition 5]{ABL} gives the bounds on the gradient of the fundamental solution.
\begin{lemma}[cf. {\cite[Proposition 5]{ABL}}]
	If $n\geq 2$ and $A$ is periodic then the fundamental solution $G$ of \eqref{elliptic eq} satisfies the following gradient estimates:
	\begin{align}
	\label{gradient estimate fundamental sol}
	&\exists C>0, && \forall x\in \Rn, &&\forall y\in \Rn,&&|D_xG(x,y)| \leq \frac{C}{|x-y|^{n-1}},\\
	&\exists C>0, && \forall x\in \Rn, &&\forall y\in \Rn,&&|D_yG(x,y)| \leq \frac{C}{|x-y|^{n-1}}.
	\end{align}
\end{lemma}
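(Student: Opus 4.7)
The plan is to combine two ingredients: the Avellaneda--Lin uniform interior Lipschitz estimate for periodic homogenization, and the pointwise size bound \eqref{bound for fund.sol n>2} for $G$ itself. The former says that whenever $A$ is $\mathbb{Z}^n$-periodic and uniformly elliptic, any weak solution $u$ of $-\mathcal{L}u=0$ in a ball $B_{2\rho}(x_0)$ satisfies
\begin{equation*}
\|Du\|_{L^\infty(B_\rho(x_0))} \leq C\rho^{-1}\|u\|_{L^\infty(B_{2\rho}(x_0))},
\end{equation*}
with $C$ independent of $\rho$. This is the essential use of periodicity: for general measurable $a_{ij}$ one only obtains $C^\alpha$ regularity via De Giorgi--Nash--Moser, and no scale-invariant pointwise gradient bound is available at large $\rho$.

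To prove the first inequality, fix $y\in\mathbb{R}^n$, $x\neq y$, and set $r:=\tfrac{1}{2}|x-y|$. Since $y\notin B_r(x)$, the function $z\mapsto G(z,y)$ is a weak solution of $-\mathcal{L}G(\cdot,y)=0$ in $B_r(x)$. Applying the uniform Lipschitz estimate with $\rho=r/2$ yields
\begin{equation*}
|D_xG(x,y)| \leq Cr^{-1}\|G(\cdot,y)\|_{L^\infty(B_{r/2}(x))}.
\end{equation*}
Every $z\in B_{r/2}(x)$ satisfies $|z-y|\geq|x-y|-r/2\geq r$, so \eqref{bound for fund.sol n>2} gives $G(z,y)\leq C|z-y|^{2-n}\leq Cr^{2-n}$. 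Combining the two bounds produces
\begin{equation*}
|D_xG(x,y)| \leq C r^{-1}\cdot r^{2-n} \leq C|x-y|^{1-n},
\end{equation*}
which is the first claimed estimate.

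For the second estimate, I would use the symmetry $G(x,y)=G(y,x)$, which holds because $A$ is symmetric so $\mathcal{L}$ is self-adjoint (this is recorded in the remark preceding the lemma). Then $D_yG(x,y)$ equals the derivative of $G(\cdot,x)$ with respect to its first argument evaluated at $y$, and the first estimate applied with the roles of $x$ and $y$ swapped yields $|D_yG(x,y)|\leq C|x-y|^{1-n}$.

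The main obstacle is the Avellaneda--Lin large-scale uniform Lipschitz estimate itself, which is the nontrivial input from periodic homogenization theory and the sole reason the lemma is restricted to the periodic case. Everything else is bookkeeping: once that estimate is granted, the argument is a one-line interior regularity bootstrap off the pointwise bound \eqref{bound for fund.sol n>2}, plus a symmetry swap for the $y$-derivative. The same argument, incidentally, shows why the result is not expected to transfer verbatim to the stationary ergodic setting: there the scale-invariant pointwise Lipschitz estimate fails, and one must replace it by the local square-average bounds mentioned in the introduction.
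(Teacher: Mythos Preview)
The paper does not supply its own proof of this lemma; it is quoted directly as \cite[Proposition~5]{ABL}. Your sketch via the Avellaneda--Lin uniform interior Lipschitz estimate, combined with the pointwise size bound \eqref{bound for fund.sol n>2} and then the symmetry $G(x,y)=G(y,x)$ for the $y$-derivative, is the standard and correct route to this result and is essentially what underlies the cited reference.

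Two minor remarks. First, in your application of the Lipschitz estimate with $\rho=r/2$, the $L^\infty$ norm on the right should be taken over $B_{2\rho}(x)=B_r(x)$ rather than $B_{r/2}(x)$; this is harmless, since for $z\in B_r(x)$ one still has $|z-y|\geq r$ and the same bound $G(z,y)\leq Cr^{2-n}$ follows. Second, your argument as written invokes \eqref{bound for fund.sol n>2}, which the paper states only for $n\geq 3$, whereas the lemma is phrased for $n\geq 2$; the two-dimensional case requires the logarithmic size bound for $G$ in place of \eqref{bound for fund.sol n>2}, but the paper itself works exclusively in $n\geq 3$, so this does not affect anything downstream.
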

Using the technique of $G$-convergence, the authors in \cite{ZKOH} established the results on the homogenization and the asymptotic behavior of the fundamental solution of \eqref{elliptic eq}. We refer to \cite{ZKOH,D} for the definition of $G$-convergence and more details of the homogenization problems.
\begin{lemma}[cf. {\cite[Chapter III, Theorem 2]{ZKOH}}]
	\label{Asymptotic expansion fund.sol}
	Let $n \geq 3$, $A$ satisfy \eqref{ellipticity},  \eqref{condition in media} and $G^\varepsilon$ be the fundamental solution of
		\begin{equation}
		\label{elliptic epsilon eq}
		\begin{aligned}
		&-\mathcal{L}^\varepsilon u :=-D_i \left(a_{ij} \left(\frac{x}{\varepsilon} \right) D_j u\right)=0.
		\end{aligned}
		\end{equation} Then $G^\varepsilon$ converges locally uniformly to $G^0$ in $\mathbb{R}^{2n} \setminus \{x=y\}$ as $\varepsilon \to 0$, where  $G^0$ is the fundamental solution of
	\begin{equation}
	\label{elliptic hom eq}
	\begin{aligned}
	&-\mathcal{L}^0 u:= -q_{ij}D_{ij}u=0,
	\end{aligned}
	\end{equation}
  and $(q_{ij})$ is a constant symmetric positive definite matrix depending only on $a_{ij}$.  Moreover, if we denote $G$ as the fundamental solution of \eqref{elliptic eq}, then we will have the asymptotic expression
	\begin{equation}
	\label{asymptotic fund.sol}
	G(x,y)=G_0(x,y)+|x-y|^{2-n}\theta(x,y),
	\end{equation}
	where $\theta(x,y) \to 0$ as $|x-y| \to \infty$ uniformly on the set $\{|x|+|y|<a|x-y|\}$, $a$ is any fixed positive constant.
\end{lemma}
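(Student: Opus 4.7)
The plan is a compactness-plus-identification argument giving $G^\varepsilon \to G^0$ locally uniformly away from the diagonal, followed by a rescaling to deduce the asymptotic expansion \eqref{asymptotic fund.sol}. The identification rests on the periodic G-convergence of $\mathcal{L}^\varepsilon$ to the constant-coefficient operator $\mathcal{L}^0 u = q_{ij}D_{ij}u$, where the homogenized matrix is constructed from the standard cell correctors $\chi_k \in H^1_{\text{per}}([0,1]^n)$ solving $D_i\bigl(a_{ij}(y)(D_j\chi_k + \delta_{jk})\bigr) = 0$, via $q_{ik} = \int_{[0,1]^n} a_{ij}(y)(D_j\chi_k + \delta_{jk}) \dy$. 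Symmetry and positive definiteness of $(q_{ij})$ are inherited from $(a_{ij})$ by the standard energy argument.

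For compactness, the coefficients $a_{ij}(\cdot/\varepsilon)$ remain uniformly elliptic with the same constants $\alpha,\beta$ and are still periodic, so the ABL bounds \eqref{bound for fund.sol n>2} and the gradient estimate \eqref{gradient estimate fundamental sol} apply with constants independent of $\varepsilon$:
\[
  C^{-1}|x-y|^{2-n} \leq G^\varepsilon(x,y) \leq C|x-y|^{2-n}, \qquad |D_x G^\varepsilon(x,y)| \leq C|x-y|^{1-n}.
\]
These yield uniform local boundedness and equicontinuity on any compact $K \Subset \mathbb{R}^{2n}\setminus\{x=y\}$, so Arzelà--Ascoli produces a subsequence $G^{\varepsilon_k} \to \tilde G$ converging locally uniformly away from the diagonal.

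To identify $\tilde G$ I pass to the limit in the defining identity
\[
  \int_{\Rn} a_{ij}(x/\varepsilon)\, D_j G^\varepsilon(x,y)\, D_i \varphi(x) \dx = \varphi(y), \qquad \varphi \in C^\infty_c(\Rn),
\]
after splitting the integral at $B_r(y)$. On the outside, the gradient bound makes $G^{\varepsilon_k}(\cdot,y)$ uniformly bounded in $H^1(\Omega\setminus B_r(y))$ for any bounded $\Omega$, so the Dal Maso--Modica G-convergence machinery \cite{DM1,DM2,D} yields the flux convergence $a_{ij}(\cdot/\varepsilon)D_j G^{\varepsilon_k} \rightharpoonup q_{ij}D_j\tilde G$ weakly in $L^2$. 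On the inside, the same gradient bound gives
\[
  \int_{B_r(y)} \bigl| a_{ij}(x/\varepsilon)\, D_j G^\varepsilon(x,y)\, D_i \varphi(x) \bigr| \dx \leq C \|D\varphi\|_\infty \int_{B_r(y)} |x-y|^{1-n} \dx = O(r),
\]
uniformly in $\varepsilon$, and the same bound holds for $\tilde G$. Letting first $\varepsilon_k\to 0$ and then $r\to 0$ yields $\int q_{ij} D_j \tilde G D_i \varphi \dx = \varphi(y)$, i.e.\ $-\mathcal{L}^0 \tilde G(\cdot,y) = \delta_y$. Combined with the decay at infinity forced by $\tilde G \leq C|x-y|^{2-n}$, uniqueness of the fundamental solution of $\mathcal{L}^0$ gives $\tilde G = G^0$ and upgrades the subsequential convergence to the full sequence.

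Finally, the asymptotic expansion is obtained from the scaling identity $G(x,y) = \varepsilon^{n-2} G^\varepsilon(\varepsilon x, \varepsilon y)$, which follows directly from a change of variables in the distributional equation. Setting $\varepsilon = 1/R$ with $R = |x-y|$ and $\tilde x = x/R,\ \tilde y = y/R$ rewrites this as $G(x,y) = R^{2-n} G^{1/R}(\tilde x, \tilde y)$; since $G^0$ is $(2-n)$-homogeneous, also $G^0(x,y) = R^{2-n} G^0(\tilde x, \tilde y)$. The set $\{|x|+|y|<a|x-y|\}$ corresponds to $(\tilde x,\tilde y)$ ranging in the compact set $\{|\tilde x|+|\tilde y|\leq a,\ |\tilde x-\tilde y|=1\}$, on which the first part gives $G^{1/R}\to G^0$ uniformly, and hence $G(x,y) - G^0(x,y) = |x-y|^{2-n}\theta(x,y)$ with $\theta\to 0$ as $|x-y|\to\infty$. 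The main obstacle throughout is the identification step: the passage to the limit in the flux requires the periodic ABL gradient estimate \eqref{gradient estimate fundamental sol} to control the singular contribution at $y$ uniformly in $\varepsilon$, which is precisely the reason the result is confined to periodic coefficients.
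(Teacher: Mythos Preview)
The paper does not supply its own proof of this lemma; it is quoted from \cite[Chapter~III, Theorem~2]{ZKOH} as a known result, so there is no in-paper argument to compare against. Your compactness-plus-identification scheme is a correct and self-contained route: the uniform-in-$\varepsilon$ two-sided bound and gradient estimate from \cite{ABL} survive the rescaling $a_{ij}(\cdot/\varepsilon)$ with the same constants (via the scaling identity $G^\varepsilon(x,y)=\varepsilon^{2-n}G(x/\varepsilon,y/\varepsilon)$), which gives equicontinuity and the $O(r)$ control near the pole; the flux convergence on $\Omega\setminus B_r(y)$ is the standard local consequence of periodic $G$-convergence applied to $\mathcal{L}^\varepsilon$-harmonic functions bounded in $H^1$; and the scaling derivation of \eqref{asymptotic fund.sol} is clean and matches exactly how the paper later uses the result (see Lemma~\ref{rescaled fund.sol}). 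One small point worth making explicit: to bound $\int_{B_r(y)} q_{ij}D_j\tilde G\,D_i\varphi\,dx$ by $O(r)$ you are implicitly using that $\tilde G$ inherits a gradient bound $|D\tilde G|\le C|x-y|^{1-n}$; this follows from interior estimates for the constant-coefficient operator $\mathcal{L}^0$ together with the two-sided bound on $\tilde G$, and deserves a sentence.
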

\subsection{Rescaling}
\label{sec: rescaling}
Recall that we use the notations $\mathcal{L}, \mathcal{L}^\lambda$ for the operators as defined in \eqref{operator-L}.
Following \cite{QV,P1, PV}, for $\lambda >0$ and $n \geq 3$ we rescale the solution $v$ of the problem \eqref{Stefan} as
$$v^\lambda(x,t)=\lambda^{\frac{n-2}{n}}v(\lambda^{\frac{1}{n}}x,\lambda t).$$
Clearly $v^\lambda$ is a solution of
\begin{equation}
\label{rescaled equation}
\left\{
\begin{aligned}
\lambda^{\frac{2-n}{n}}v_t^\lambda- \mathcal{L}^\lambda v^\lambda &=0 && \mbox{ in } \Omega(v^\lambda) \setminus K^\lambda,\\
v^\lambda &= \lambda^{\frac{n-2}{n}} && \mbox{ on } K^\lambda,\\
\frac{v_t^\lambda}{|D v^\lambda|} &=g^\lambda(x) a^\lambda_{ij}(x) D_j v^\lambda \nu_i && \mbox{ on } \Gamma(v^\lambda),\\
v^\lambda(\cdot,0)&=v_0^\lambda && \mbox{ in } \Rn,
\end{aligned}
\right.
\end{equation}
where $K^\lambda := K / \lambda^{\frac{1}{n}}, \Omega_0^\lambda:= \Omega_0/\lambda^{\frac{1}{n}}$, $g^\lambda(x)=g(\lambda^{\frac{1}{n}}x), a_{ij}^\lambda(x)=a_{ij}(\lambda^{\frac{1}{n}}x)$ and $v_0^\lambda(x)= \lambda^{\frac{n-2}{n}}v_0(\lambda^{\frac{1}{n}}x)$.

The corresponding rescaling of the weak solution $u$ of the variational problem \eqref{Variational problem} can be shown to be (see  \cite{QV,P1, PV})
$$u^\lambda(x,t)= \lambda^{-\frac{2}{n}} u(\lambda^{\frac{1}{n}}x, \lambda t),$$
which solves the rescaled obstacle problem
\begin{equation}
\label{rescaled Variational problem}
\left\{
\begin{aligned}
u^\lambda(\cdot,t) &\in \mathcal{K}^\lambda(t), && 0<t<\infty,\\
(\lambda^{\frac{2-n}{n}}u^\lambda_t-\mathcal{L}^\lambda u^\lambda)(\varphi -u^\lambda) &\geq f^\lambda(x)(\varphi -u^\lambda) &&\text{a.e. } (x,t) \in \Rn \times (0,\infty)\\
&&& \text{for any } \varphi \in \mathcal{K}^\lambda(t),\\
u^\lambda(x,0)&=0,
\end{aligned}
\right.
\end{equation}
where
$\mathcal{K^\lambda}(t)=\{\varphi \in H^1(\mathbb{R}^n),\varphi \geq 0, \varphi =\lambda^{\frac{n-2}{n}}t \text{ on } K^\lambda \}$ and $f^\lambda(x)=f(\lambda^{\frac{1}{n}}x)$.
\begin{remark}
	\label{ig compact support}
  The admissible set $\mathcal{K}^{\lambda}(t)$ can be defined in this way due to \cite[Remark~2.13]{PV}. Note that for any fixed time $t$, the admissible set $\mathcal{K}^\lambda(t)$ depends on $\lambda$.
\end{remark}

\subsubsection{Convergence of the rescaled fundamental solution}
By Lemma~\ref{Asymptotic expansion fund.sol}, we have the following convergence result on the rescaled fundamental solution.
\begin{lemma}
	\label{rescaled fund.sol}
	Let $G$ be the fundamental solution of \eqref{elliptic eq} in dimension $n \geq 3$ and $G^\lambda$ be its rescaling as
	\begin{equation*}
	G^\lambda(x,y)=\lambda^{\frac{n-2}{n}}G(\lambda^{\frac{1}{n}}x,\lambda^{\frac{1}{n}}y).
	\end{equation*}
	Then $G^\lambda$ is the fundamental solution of
		\begin{equation}
		\label{elliptic rescaled eq}
		-\mathcal{L}^\lambda u = 0,
		\end{equation}
		 and $|G^\lambda(x,y) - G^0(x,y)| \to 0$
 uniformly on every compact subset of $\mathbb{R}^{2n} \setminus \{(x,x)\in \mathbb{R}^{2n}\}$ where $G^0$ is the fundamental solution of \eqref{elliptic hom eq}.
\end{lemma}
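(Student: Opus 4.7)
The plan is to deduce the two claims by a direct scaling computation followed by an appeal to the uniqueness part of the Aronson--Bers--Littman type theorem and to Lemma \ref{Asymptotic expansion fund.sol}.

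First, I would verify that $G^\lambda$ satisfies the two defining properties of the fundamental solution of $-\mathcal{L}^\lambda u = 0$. For the decay at infinity, the pointwise bound \eqref{bound for fund.sol n>2} gives
\begin{equation*}
  G^\lambda(x,y) = \lambda^{\frac{n-2}{n}} G(\lambda^{\frac{1}{n}}x,\lambda^{\frac{1}{n}}y) \leq C\,\lambda^{\frac{n-2}{n}} |\lambda^{\frac{1}{n}}(x-y)|^{2-n} = C|x-y|^{2-n},
\end{equation*}
which tends to $0$ as $|x-y|\to\infty$; an analogous lower bound holds. For the distributional identity, fix $y\in\mathbb{R}^n$ and $\varphi\in C_0^\infty(\mathbb{R}^n)$. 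By the chain rule, $D_j G^\lambda(x,y) = \lambda^{\frac{n-1}{n}} (D_j G)(\lambda^{1/n}x,\lambda^{1/n}y)$. Performing the change of variables $\xi=\lambda^{1/n}x$, $\eta=\lambda^{1/n}y$, and writing $\psi(\xi):=\varphi(\lambda^{-1/n}\xi)$ so that $(D_i\varphi)(\lambda^{-1/n}\xi) = \lambda^{1/n}D_i\psi(\xi)$, one finds
\begin{equation*}
  \int_{\mathbb{R}^n} a_{ij}(\lambda^{\frac{1}{n}}x) D_j G^\lambda(x,y) D_i\varphi(x)\dx = \int_{\mathbb{R}^n} a_{ij}(\xi) (D_j G)(\xi,\eta) D_i\psi(\xi)\,d\xi = \psi(\eta) = \varphi(y),
\end{equation*}
where all the powers of $\lambda$ cancel exactly. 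Hence $G^\lambda(\cdot,y)$ is a distributional solution of $-\mathcal{L}^\lambda G^\lambda(\cdot,y) = \delta_y$.

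Second, I would identify $G^\lambda$ with the $\varepsilon$-rescaled fundamental solution appearing in Lemma \ref{Asymptotic expansion fund.sol}. Setting $\varepsilon := \lambda^{-1/n}$, the operator $\mathcal{L}^\varepsilon = D_i(a_{ij}(x/\varepsilon)D_j\cdot)$ from \eqref{elliptic epsilon eq} coincides with $\mathcal{L}^\lambda$ thanks to the symmetry of $(a_{ij})$. Since the theorem of Avellaneda--Bers--Littman guarantees uniqueness of the fundamental solution satisfying \eqref{bound for fund.sol n>2} and vanishing at infinity, $G^\lambda$ must agree with the $G^\varepsilon$ of that lemma for $\varepsilon=\lambda^{-1/n}$. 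As $\lambda\to\infty$ we have $\varepsilon\to 0$, so Lemma \ref{Asymptotic expansion fund.sol} yields $G^\lambda \to G^0$ locally uniformly on $\mathbb{R}^{2n}\setminus\{x=y\}$, which is exactly the desired uniform convergence on compact subsets of $\mathbb{R}^{2n}\setminus\{(x,x)\}$.

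The only delicate point is the bookkeeping of the scaling exponents in the first step; once the chain rule and change of variables are organized, the rest is a straightforward invocation of uniqueness and of the homogenization result already quoted. No genuine analytic obstacle arises here because the homogenization of the fundamental solution has been black-boxed via Lemma \ref{Asymptotic expansion fund.sol}.
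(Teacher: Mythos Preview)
Your proposal is correct and follows essentially the same route as the paper: a direct scaling computation to check that $G^\lambda$ satisfies the defining distributional identity and the bound \eqref{bound for fund.sol n>2}, followed by the identification $\varepsilon=\lambda^{-1/n}$ and an appeal to Lemma~\ref{Asymptotic expansion fund.sol}. The paper carries out the scaling check only for the pole at $y=0$ as a notational simplification, while you do it for general $y$ and make the invocation of uniqueness (to identify $G^\lambda$ with the $G^\varepsilon$ of that lemma) explicit; these are cosmetic differences. One minor correction: the existence/uniqueness result you cite is due to Anantharaman--Blanc--Legoll \cite{ABL} (building on \cite{LSW}), not ``Avellaneda--Bers--Littman''.
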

\begin{proof}
	We will show that $G^\lambda$ is the fundamental solution of \eqref{elliptic rescaled eq}, then the result follows directly from Lemma~\ref{Asymptotic expansion fund.sol} with $\varepsilon = \lambda^{-\frac{1}{n}}$.

		For simplicity, we will check that $G^\lambda$ satisfies the definition of the fundamental solution of \eqref{elliptic rescaled eq} for fixed $y=0, F(x)= G(x,0)$ and $F^\lambda(x):=\lambda^{\frac{n-2}{n}}F(\lambda^{\frac{1}{n}}x)$.

		Indeed, we have $D_jF^{\lambda}(x)={\lambda}^{\frac{n-1}{n}}D_jF({\lambda}^{\frac{1}{n}}x)$. Take a function  $\varphi \in C^\infty_0(\Rn)$, then
		\begin{align*}
		\int_{\Rn}a_{ij}^\lambda(x)D_jF^{\lambda}(x)D_i\varphi(x)\dx&=\int_{\Rn}{\lambda}^{\frac{n-1}{n}}a_{ij}({\lambda}^{\frac{1}{n}}x)D_jF({\lambda}^{\frac{1}{n}}x)D_i\varphi(x)\dx\\
		&=\int_{\Rn}{\lambda^{-\frac{1}{n}}} a_{ij}(y)D_jF(y)D_i\varphi(\lambda^{-\frac{1}{n}}y)\dy\\
		&=\int_{\Rn} a_{ij}(y)D_jF(y)D_i\tilde{\varphi}(y)\dy\\
		&=\tilde{\varphi}(0) = \varphi(0),
		\end{align*}
		where $\tilde{\varphi}(y)=\varphi(\lambda^{-\frac{1}{n}}y)$.
		Moreover, $F^\lambda$ satisfy the estimate \eqref{bound for fund.sol n>2} since $F$ has this property. Hence, by definition, $F^\lambda$ is the fundamental solution of \eqref{elliptic rescaled eq}.
	\end{proof}
	\begin{remark}
		The rate of this convergence as well as the rate of convergence for derivatives were also derived in \cite{ALin}.
	\end{remark}
\subsection{Construction of barriers from a fundamental solution}
\label{sec: sub and super sol}
The main goal of this section is to construct a subsolution and a supersolution of \eqref{Stefan}
 from a fundamental solution of the elliptic equation \eqref{elliptic eq} so that we can use them as barriers to track the behavior of the support of a solution of \eqref{Stefan}.

From now on, we will  let $\mathcal{L}^0$ be the limit of the operators of $\mathcal{L}^\lambda$ as in Lemma~\ref{Asymptotic expansion fund.sol} and consider the fundamental solutions of \eqref{elliptic eq}, \eqref{elliptic rescaled eq}  and \eqref{elliptic hom eq} with a pole at the origin as
\begin{equation*}
\begin{aligned}
F(x)&:= G(x,0), && F^\lambda(x):=G^\lambda(x,0)= \lambda^{\frac{n-2}{n}}F(\lambda^{\frac{1}{n}}x),
&&& F^0(x)&:= G^0(x,0).
\end{aligned}
\end{equation*}
	Note that $F^0$ is preserved under the rescaling by \eqref{fundamental const coef}.
\subsubsection{Construction of a supersolution}
\label{supersol}
Define
\begin{equation*}
\theta(x,t):={[C_1F(x)-C_2 t^{\frac{2-n}{n}}]_+},
\end{equation*}
where $[s]_+ := \max(s, 0)$ denotes the positive part of $s$
and $C_1,C_2$ are non-negative constants to be chosen later.
It easily follows that in $\{\theta>0\} \setminus \{x = 0\}$,
\begin{align*}
\theta_t(x,t)&= \frac{C_2(n-2)}{n}t^{\frac{2-2n}{n}} \geq 0,\\
D\theta&= C_1DF,\\
\mathcal{L}\theta&=0,\\
\theta_t -\mathcal{L}\theta &\geq 0.
\end{align*}
Due to the estimates \eqref{bound for fund.sol n>2} and \eqref{gradient estimate fundamental sol}, there exists a constant $C$ such that
\begin{equation}
\label{estimate of F(x)}
\begin{aligned}
C^{-1}|x|^{2-n} &\leq F(x) \leq C|x|^{2-n},\\
|DF(x)|&\leq C|x|^{1-n} .
\end{aligned}
\end{equation}
Then for $(x,t) \in \p{\{\theta>0\}}$ we have
\begin{equation*}
 C_2 t^{\frac{2-n}{n}}=C_1F(x) \geq C_1C^{-1}|x|^{2-n},
\end{equation*}
which yields
\begin{equation*}
t^{\frac{1}{n}} \leq \left(\frac{C_1}{CC_2}\right)^{\frac{1}{2-n}}|x|.
\end{equation*}
Thus on $\partial \{\theta>0\}$,
\begin{equation*}
\theta_t(x,t) = \frac{C_2(n-2)}{n}t^{\frac{2-2n}{n}} \geq \frac{n-2}{n}\left(\frac{C_1}{C}\right)^{\frac{2-2n}{2-n}} C_2^{\frac{n}{2-n}}|x|^{2-2n}.
\end{equation*}
Fix any $t_0 >0$. We can choose $C_1$ large enough and $C_2$ small enough such that
\begin{align*}
\label{supersol condition}
&\theta_t(x,t) \geq M\beta C_1^2 C^2 |x|^{2-2n} \geq M\beta|D\theta(x,t)|^2 \mbox{ on } \partial \{\theta>0 \},\\
&\theta> 1 \mbox{ on } K \mbox{ and }\theta(x,t_0)>v(x,t_0),
\end{align*}
 where $\alpha, \beta$ are the elliptic constants from \eqref{ellipticity}. By \eqref{ellipticity and Stefan bdr cond}, $\theta_t \geq g\,a_{ij}\, D_j\theta D_i\theta$ on $\partial\{\theta>0\}$ and by \eqref{Stefan bdr cond}, $\theta$ is a supersolution of \eqref{Stefan} in $\Rn \times [t_0, \infty)$.

\subsubsection{Construction of a subsolution}
\label{subsol}
Let $h$ be the function constructed in \cite[Appendix A]{K3} with $\mathcal{L}h=n, Dh(x)=(A(x))^{-1}x$ and let $c, \tilde{c} >0$ be constants such that
\begin{equation}
\label{quadratic growhth}
c|x|^2 \leq h(x) \leq \tilde{c}|x|^2.
\end{equation}

Consider the function
\begin{equation}
\label{subsol formula}
\theta(x,t):=\left[c_1F(x)+\frac{c_2h(x)}{t}-c_3 t^{\frac{2-n}{n}}\right]_+ \chi_{E}(x,t)
\end{equation}
with non-negative constants $c_1,c_2,c_3$ to be chosen later,
where
 \begin{equation*}
 \begin{aligned}
   E&:={\{(x,t): \tfrac{\partial F_b}{\partial r}(|x|,t)<0, t > 0\}}, &&F_b(r,t):=Cc_1r^{2-n}+\frac{c_2\tilde{c}r^2}{t}-c_3t^{\frac{2-n}{n}},
 \end{aligned}
 \end{equation*}
 $C,\tilde{c}$ are constants as in \eqref{estimate of F(x)}, \eqref{quadratic growhth}.
We claim that we can choose constants $c_1,c_2,c_3,t_0$ such that $\theta$ is a subsolution of \eqref{Stefan} for $ t \in [t_0,\infty)$.
The differentiation of  $\theta$ on the set $\{\theta>0\} \setminus \{x = 0\}$ yields
\begin{equation}
\label{subsolution computations}
\begin{aligned}
  D\theta(x,t)&=  c_1DF(x)+ \frac{c_2A(x)^{-1}x}{t},\\
\mathcal{L}\theta(x,t)&=\frac{c_2n}{t},\\
\theta_t(x,t)&=-\frac{c_2h(x)}{t^2}+\frac{c_3(n-2)}{n}t^{\frac{2-2n}{n}}=t^{\frac{2-2n}{n}}\left[\frac{c_3(n-2)}{n}-\frac{c_2h(x)}{t^\frac{2}{n}}\right],
\end{aligned}
\end{equation}
and thus
\begin{equation*}
  \theta_t(x,t) -\mathcal{L}\theta(x,t) = t^{\frac{2-2n}{n}}\left[\frac{c_3(n-2)}{n}-\frac{c_2h(x)}{t^{\frac{2}{n}}}-\frac{c_2n}{t^{\frac{2-n}{n}}}\right] < 0 \quad \text{for } t \gg 1.
\end{equation*}
Thus, we can choose $t_0$ large enough such that $\theta_t -\mathcal{L}\theta < 0$ for $t\geq t_0$.

 Now we will prove the continuity of $\theta$. We have
 \begin{equation}
 \label{stronger barrier}
 0\leq \theta(x,t) \leq [F_b(|x|,t)]_+ \chi_{E}(x,t)=:F_b^+(x,t),
 \end{equation}
 and hence $\Omega_t(\theta) \subset \Omega_t(F_b^+)$ for all $t$.
 We see that
\begin{equation}
\label{radius of E(t)}
\frac{\partial F_b}{\partial r}(r,t)= Cc_1(2-n)r^{1-n}+\frac{2c_2\tilde{c}r}{t}<0 \Leftrightarrow r < \left(\frac{Cc_1(n-2)}{2c_2\tilde{c}}\right)^{\frac{1}{n}}t^{\frac{1}{n}}=:r_0(t)
\end{equation}
and hence $E=\{(x, t): |x|< r_0(t), t >0\}$. Clearly $\theta$ is continuous in the set $\{\theta>0\} \setminus \{x = 0\}$. Furthermore, $\theta$ is continuous in $E\setminus \{x = 0\}$ and $\theta=0$ on $E^\compl$.  We will show that we can choose the constants such that $\theta$ is continuous through boundary of $E$. Indeed, for $(x_0,t) \in \partial E$, $t > 0$,
\begin{equation*}
F_b(|x_0|,t)= F_b(r_0(t),t)=C_{F_b} t^{\frac{2-n}{n}},
\end{equation*}
where $C_{F_b}=(Cc_1)^{\frac{2}{n}}(c_2\tilde{c})^{\frac{n-2}{n}}\left[\left(\frac{n-2}{2}\right)^{\frac{2}{n}} \frac{n}{n-2}\right]-c_3$ . We can choose $c_1,c_2,c_3$ such that $C_{F_b}<0$. Then $F_b(|x_0|,t)<0$ for all $(x_0, t) \in \p{E}$, $t>0$. Since $(x,t) \mapsto F_b(|x|, t)$ is continuous in a neighborhood of $\partial E$, we deduce by \eqref{stronger barrier} that $\theta=0$ in a neighborhood of $\partial E$ and therefore it is continuous across $\partial E$. Note that $C_{F_b} <0$ if and only if
\begin{equation}
\label{subsolution coef condition 1}
c_3 \geq C_0 (c_1)^{\frac{2}{n}}(c_2)^{\frac{n-2}{n}},
\end{equation}
 where $C_0$ is a constant depending only on $n,C,\tilde{c}$.

We finally need to show that we can choose suitable constants such that $\theta$ satisfies the subsolution condition on the free boundary.

We first note that $\theta(x,t) \geq \tilde{\theta}(x,t):=\left[Cc_1|x|^{2-n}-c_3 t^{\frac{2-n}{n}}\right]_+ $. Then $\Omega(\tilde{\theta}) \subset \Omega(\theta)$, or more precisely, there exists a constant $\tilde{C}$ such that
\begin{equation}
\label{subsolution bdr condition 1}
|x|\geq \tilde{C}t^{\frac{1}{n}} \mbox{ for all } (x,t) \in \partial \{\theta>0\}.
\end{equation}

By \eqref{subsolution computations} we have
\begin{align*}
\theta_t(x,t)&\leq c_3 t^{\frac{2-2n}{n}},\\
|D\theta(x,t)|^2&=c_1^2|DF(x)|^2+ \frac{2c_1c_2}{t}DF(x)\cdot A^{-1}x+\frac{c_2^2}{t^2}|A^{-1}x|^2,\\
&\geq  \frac{2c_1c_2}{t}DF(x)\cdot A^{-1}x+\frac{c_2^2}{t^2}|A^{-1}x|^2.
\end{align*}

Since $A$ is a symmetric bounded matrix satisfying the ellipticity \eqref{ellipticity}, then these properties also hold for $A^{-1}$ and $A^{-2}$ with appropriate constants. Hence, for $(x,t) \in \p{\{\theta>0\}}$,
\begin{equation*}
\begin{aligned}
|D\theta(x,t)|^2 &\geq \frac{c_2^2}{t^2}\tilde{\alpha}|x|^2 -  \frac{2c_1c_2}{t}C_A|DF(x)||x| &&\mbox{ for some } \tilde{\alpha}, C_A >0\\
& \geq \frac{c_2^2}{t^2}\tilde{\alpha}|x|^2 -  \frac{2c_1c_2}{t}CC_A|x|^{2-n} && (\mbox{by \eqref{estimate of F(x)}})\\
&\geq \left(c_2^2 \tilde{\alpha}\tilde{C}^2 -2c_1c_2CC_A\tilde{C}^{2-n}\right)t^{\frac{2-2n}{n}} &&(\mbox{by \eqref{subsolution bdr condition 1}}).
\end{aligned}
\end{equation*}
We want to choose $c_1,c_2,c_3$ such that $\theta_t \leq m\alpha |D\theta|^2$ on $\partial\{\theta>0\}$, which will hold if
\begin{equation}
\label{subsolution coef condition 2}
c_3 \leq m\alpha \left(c_2^2 \tilde{\alpha}\tilde{C}^2 -2c_1c_2CC_A\tilde{C}^{2-n}\right)=:C_0^1 c_2^2 - C_0^2 c_1c_2,
\end{equation}
where $C_0^1,C_0^2$ are fixed positive constants.  Then by \eqref{elliptic epsilon eq}, $\theta_t \leq g\,a_{ij}\,D_j\theta D_i \theta$ on $\partial \{\theta>0\}$.

The conditions \eqref{subsolution coef condition 1} and \eqref{subsolution coef condition 2} hold if we choose some suitable $c_1,c_2,c_3$. For example, fix any $c_1>0$, choose $c_2$ large enough such that
$$ C_0 (c_1)^{\frac{2}{n}}(c_2)^{\frac{n-2}{n}}<C_0^1 c_2^2 - C_0^2 c_1c_2 .$$
Note that the above inequality holds for $c_2$ large enough since for a fixed $c_1>0$, the right hand side tends to $\infty$ as $c_2 \to \infty$ faster than the left hand side. Then \eqref{subsolution coef condition 1} and \eqref{subsolution coef condition 2} hold for any $c_3$ which is between these two numbers. Fix $t_0$ such that $\theta_t - \mathcal{L}\theta < 0$ in $\{\theta>0\}$ for chosen $c_2,c_3$ and $t \geq t_0$. Choosing a smaller $c_1$ if it is needed, we can assume that the support of $\theta(\cdot,t_0)$ is contained in $\Omega_{t_0}(v), \theta(x,t_0) \leq v(x,t_0)$ and $\theta<1$ on $\partial K$. Thus, with the help of \eqref{Stefan bdr cond}, we see that $\theta$ is a subsolution of the Stefan problem \eqref{Stefan} for that choice of constants.
 \subsubsection{Some results on the barriers for the Stefan problem \eqref{Stefan}}
 Due to the construction above, we can use the functions of the form
 \begin{equation}
 \label{barrier form}
 \theta(x,t):= [C_1F(x)-C_2t^{\frac{2-n}{n}}]_+
\end{equation}
with $C_1,C_2 >0$ as barriers for the Stefan problem \eqref{Stefan}. Since our purpose is to study the asymptotic behavior, we first observe the convergence of the rescaled barriers.
\begin{lemma}
	\label{barrier convergence of rescaled }
	Let $\theta$ be a function of the form \eqref{barrier form} and $\theta^\lambda:=\lambda^{\frac{n-2}{n}}\theta(\lambda^{\frac{1}{n}}x, \lambda t)$. Then $\theta^\lambda \to \theta^0$ locally uniformly in $(\Rn \setminus \{0\}) \times [0,\infty)$, where \begin{equation}
	\theta^0(x,t):= [C_1 F^0(x) -C_2t^{\frac{2-n}{n}}]_+.
	\end{equation}
\end{lemma}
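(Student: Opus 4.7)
The plan is to reduce the claimed convergence to the already-established locally uniform convergence of the rescaled fundamental solutions in Lemma~\ref{rescaled fund.sol}, and then to handle separately the apparent singularity of $t^{(2-n)/n}$ at $t = 0$.

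First I would rewrite $\theta^\lambda$ in a clean form. A direct computation using the definition gives
\begin{equation*}
\theta^\lambda(x,t) = \lambda^{\frac{n-2}{n}}\bigl[C_1 F(\lambda^{\frac{1}{n}}x) - C_2 (\lambda t)^{\frac{2-n}{n}}\bigr]_+ = \bigl[C_1 F^\lambda(x) - C_2 t^{\frac{2-n}{n}}\bigr]_+,
\end{equation*}
because the factor $\lambda^{(n-2)/n}$ pulls into the bracket and exactly cancels $\lambda^{(2-n)/n}$ in the second term, while $\lambda^{(n-2)/n}F(\lambda^{1/n}x) = F^\lambda(x)$. So $\theta^\lambda$ and $\theta^0$ differ only through the substitution $F^\lambda \mapsto F^0$.

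Next I would fix a compact set $K \subset (\Rn \setminus \{0\}) \times [0,\infty)$ and take $0 < r < R$ and $T > 0$ such that $K \subset \{r \leq |x| \leq R\} \times [0,T]$. By Lemma~\ref{rescaled fund.sol} (with $y=0$), $F^\lambda \to F^0$ uniformly on $\{r \leq |x| \leq R\}$. Combined with the Lipschitz property $\bigl|[a]_+ - [b]_+\bigr| \leq |a-b|$ of the positive part, this already gives the uniform convergence of $\theta^\lambda$ to $\theta^0$ on $\{r \leq |x| \leq R\} \times [\tau, T]$ for any fixed $\tau > 0$, since there $t^{(2-n)/n}$ is bounded and the map is just a continuous translation.

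The only issue is the strip $t \in [0,\tau)$, where $t^{(2-n)/n} \to +\infty$. Here I would use that $F^\lambda$ is bounded above uniformly in $\lambda$ on the annulus $\{r \leq |x| \leq R\}$: this follows from the ellipticity bounds \eqref{bound for fund.sol n>2} applied to $F^\lambda$ (which is itself the fundamental solution of $-\mathcal{L}^\lambda u = 0$ by Lemma~\ref{rescaled fund.sol}, whose coefficients satisfy the same ellipticity constants), giving $F^\lambda(x) \leq C r^{2-n}$ on the annulus for a constant independent of $\lambda$; the same bound of course holds for $F^0$. Choosing $\tau > 0$ so small that $C_2 \tau^{(2-n)/n} > C_1 C r^{2-n}$ forces both $\theta^\lambda(x,t)$ and $\theta^0(x,t)$ to vanish identically on $\{r \leq |x| \leq R\} \times [0,\tau]$. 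Gluing the two regimes yields the locally uniform convergence, and there is no serious obstacle: the computation is essentially bookkeeping of exponents in the rescaling together with the observation that the singularity of $t^{(2-n)/n}$ at $t=0$ is harmless once the truncation $[\cdot]_+$ is applied.
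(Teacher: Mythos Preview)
Your proof is correct and follows the same approach as the paper: rewrite $\theta^\lambda$ in terms of $F^\lambda$ and invoke Lemma~\ref{rescaled fund.sol}. Your treatment is in fact more careful than the paper's, which simply asserts ``the lemma follows'' without explicitly addressing the $t\to 0$ singularity that you handle via the uniform bound on $F^\lambda$ and the truncation.
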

\begin{proof}
	We have
	\begin{equation*}
	\theta^\lambda(x,t)= [C_1 F^\lambda(x)-C_2 t^{\frac{2-n}{n}}]_+,
	\end{equation*}
	where $F^\lambda(x)=\lambda^{\frac{n-2}{n}}F(\lambda^{\frac{1}{n}}x)$ .
	By Lemma \ref{rescaled fund.sol},  $F^\lambda \to F^0$ locally uniformly in $\Rn \setminus \{0\}$ and the lemma follows.
	\end{proof}
	Moreover we will also need to know the integral of the barriers in time to analyze the weak solution of the Stefan problem \eqref{Stefan}.
	\begin{lemma}
		\label{barrier integration in time lemma}
		Let $\Theta(x,t):= \int_0^t \theta(x,s)\ds$. Then $\Theta(x,t)$ has the form
		\begin{equation}
		\label{barrier integration in time}
		\begin{aligned}
      \Theta(x,t)=C_1F(x)t-\frac{C_2n}{2}t^{\frac{2}{n}}+o(F(x)), \mbox{ as } |x| \to 0.
		\end{aligned}
		\end{equation}
	\end{lemma}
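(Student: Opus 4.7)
The plan is a direct computation exploiting the explicit form of $\theta$. First, observe that the integrand $\theta(x,s) = [C_1 F(x) - C_2 s^{(2-n)/n}]_+$ vanishes for small $s$ and becomes positive precisely once $s$ passes the threshold
\begin{equation*}
\tau(x) := \left(\frac{C_2}{C_1 F(x)}\right)^{n/(n-2)},
\end{equation*}
which is characterized by $C_2\,\tau(x)^{(2-n)/n} = C_1 F(x)$. By the lower bound $F(x) \geq C^{-1}|x|^{2-n}$ in \eqref{estimate of F(x)}, we have $\tau(x) \to 0$ as $|x|\to 0$, so for $|x|$ sufficiently small (depending on $t$) the definition of $\Theta$ reduces to
\begin{equation*}
\Theta(x,t) = \int_{\tau(x)}^{t} \bigl(C_1 F(x) - C_2 s^{(2-n)/n}\bigr)\,ds.
\end{equation*}

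Since the antiderivative of $s^{(2-n)/n}$ is $(n/2)\,s^{2/n}$, evaluating the integral yields
\begin{equation*}
\Theta(x,t) = C_1 F(x)\,t - \frac{C_2 n}{2}\,t^{2/n} \;-\; C_1 F(x)\,\tau(x) + \frac{C_2 n}{2}\,\tau(x)^{2/n}.
\end{equation*}
This already displays the two leading terms claimed in \eqref{barrier integration in time}, and it remains only to check that the two trailing terms are $o(F(x))$.

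For this, substitute the explicit formula for $\tau(x)$ and use the arithmetic identity $1 - n/(n-2) = -2/(n-2)$. One finds
\begin{equation*}
C_1 F(x)\,\tau(x) = C_1\!\left(\tfrac{C_2}{C_1}\right)^{\!n/(n-2)} F(x)^{-2/(n-2)}, \qquad \tau(x)^{2/n} = \left(\tfrac{C_2}{C_1}\right)^{\!2/(n-2)} F(x)^{-2/(n-2)},
\end{equation*}
so both are proportional to $F(x)^{-2/(n-2)}$. Since $F(x)\to \infty$ as $|x|\to 0$, this is $o(1)$, and in particular $o(F(x))$, as required. The only care needed is to track the two exponents $n/(n-2)$ (coming from the definition of $\tau$) and $2/n$ (coming from the antiderivative); there is no substantive obstacle.
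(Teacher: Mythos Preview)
Your proof is correct and follows essentially the same approach as the paper: identify the threshold time $\tau(x)$ (the paper calls it $s(x)$) before which $\theta$ vanishes, integrate explicitly on $[\tau(x),t]$, and verify that the boundary contributions at $\tau(x)$ are proportional to $F(x)^{-2/(n-2)}$ and hence $o(F(x))$. The only cosmetic difference is that the paper combines the two trailing terms into a single constant multiple of $F(x)^{2/(2-n)}$, whereas you leave them separate; either way the conclusion is immediate.
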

	\begin{proof}
		We can derive \eqref{barrier integration in time} simply by integrating the function $\theta$ of the form \eqref{barrier form}. Since $\theta$ has the form \eqref{barrier form}, we see that
		\begin{equation*}
		\begin{aligned}
		&\begin{aligned}
		& \theta > 0 &&\mbox{ if } t> s(x),\\
		& \theta = 0 && \mbox{ if } t \leq s(x),
		\end{aligned}
		&& \mbox{ where } s(x)=\left(\frac{C_1}{C_2}F(x)\right)^{\frac{n}{2-n}}.
		\end{aligned}
		\end{equation*}
		Thus,
		\begin{equation*}
		\Theta(x,t)= \left\{
		\begin{aligned}
		& 0, &&t \leq s(x),\\
		&\int_{s(x)}^t{(C_1F(x)-C_2s^{\frac{2-n}{n}})\ds}, && t> s(x).
		\end{aligned}
		\right.
		\end{equation*}
		When $t> s(x)$,
		\begin{equation*}
		\begin{aligned}
		\Theta(x,t)&=C_1F(x)t-\frac{C_2n}{2}t^{\frac{2}{n}}-C_1F(x)s(x)+\frac{C_2n}{2}(s(x))^{\frac{2}{n}}\\
		&=C_1F(x)t-\frac{C_2n}{2}t^{\frac{2}{n}}+\frac{n-2}{2}\frac{(C_1)^{\frac{2}{2-n}}}{(C_2)^{\frac{n}{2-n}}}(F(x))^{\frac{2}{2-n}}\\
		&= C_1F(x)t-\frac{C_2n}{2}t^{\frac{2}{n}}+ C(F(x))^{\frac{2}{2-n}}.
		\end{aligned}
		\end{equation*}
		Since $F(x)$ has a singularity at $x=0$ by \eqref{bound for fund.sol n>2} then $s(x) \to 0$ and  $C(F(x))^{\frac{2}{2-n}}=o(F(x))$ as $|x|\to 0$, which completes the proof.
		\end{proof}
From these barriers, we can obtain the rate of expansion of the support for viscosity solutions.
	\begin{lemma}
		\label{viscos free boundary bound}
		Let $n \geq 3$ and $v$ be a viscosity solution of (\ref{Stefan}). There exists $t_0>0$ and constants $C, C_1, C_2>0$ such that for $t\geq t_0$,
		\begin{align*}
		C_1t^{\frac{1}{n}}\leq \min_{\Gamma_t(v)}|x|\leq \max_{\Gamma_t(v)}|x|\leq C_2t^{\frac{1}{n}}
		\end{align*}
		and for $0 \leq t\leq t_0$,
		$$\max_{\Gamma_t(v)}|x|\leq C_2.$$
		Moreover,
		$$0 \leq v(x,t) \leq C|x|^{2-n}.$$
	\end{lemma}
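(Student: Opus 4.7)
The plan is to invoke the comparison principle for viscosity solutions (the Lemma following \eqref{coincidence eq}) against the two barriers built in Sections~\ref{supersol} and \ref{subsol}. Concretely, the upper bound on $\max_{\Gamma_t(v)}|x|$ together with the pointwise bound $v(x,t)\leq C|x|^{2-n}$ will be produced by comparison with a time-shifted version of the supersolution; the lower bound $\min_{\Gamma_t(v)}|x|\geq C_1 t^{1/n}$ will come from comparison with the subsolution.

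For the upper estimates I would use the time-shifted supersolution
\begin{equation*}
\Theta(x,t):=\bigl[C_1 F(x)-C_2(t+s_0)^{\frac{2-n}{n}}\bigr]_+,
\end{equation*}
with the shift $s_0$ chosen at least as large as the $t_0$ produced in Section~\ref{supersol}, so that the supersolution property of Section~\ref{supersol} is valid on all of $\Rn\times[0,\infty)$. Choosing $C_1$ large (depending on $C_2$, $s_0$, $\|v_0\|_\infty$, $\Omega_0$, and $K$) one arranges $\Theta(\cdot,0)\geq v_0$ on $\Rn$ and $\Theta\geq 1$ on $K$, since $F$ has a positive lower bound on bounded sets disjoint from the origin and blows up at the origin, while $v_0$ and the prescribed boundary value $1$ are bounded. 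The comparison principle then yields $v\leq\Theta$ globally. The pointwise bound follows from \eqref{estimate of F(x)} via $v\leq C_1F(x)\leq C_1C|x|^{2-n}$, and the support bound from the implication $\Theta(x,t)>0\Rightarrow|x|^{2-n}\gtrsim(t+s_0)^{(2-n)/n}$, i.e.\ $|x|\lesssim(t+s_0)^{1/n}$. Splitting into the regimes $t\geq t_0$ and $0\leq t\leq t_0$ and readjusting the constant recovers the two stated forms of the upper bound.

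For the lower bound I would use the subsolution $\theta$ from Section~\ref{subsol}, tuned there so that $\theta(\cdot,t_0)\leq v(\cdot,t_0)$ and $\theta\leq 1$ on $\partial K$ at the initial time $t_0\gg 1$ of its validity; comparison then gives $v\geq\theta$ on $\Rn\times[t_0,\infty)$. To extract a concrete radius I would pass to the radial minorant $\tilde\theta(x,t):=[Cc_1|x|^{2-n}-c_3 t^{(2-n)/n}]_+$ already appearing in \eqref{subsolution bdr condition 1}, whose positive set is the punctured ball $\{0<|x|<(Cc_1/c_3)^{1/(n-2)}t^{1/n}\}$. Combined with the fact that $K\subset\Omega_t(v)$ (by Proposition~\ref{boundedness of weak solution}) and that $0\in\interior K$, one concludes $B(0,C_1 t^{1/n})\subset\Omega_t(v)$, so that $\Gamma_t(v)$ is disjoint from this ball, which is exactly the inner bound.

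The main obstacle is not a single substantive estimate but the bookkeeping needed to align the constants $(t_0,C_1,C_2,s_0)$ across the two barriers and to verify the hypotheses of the comparison principle at $t=0$ (for the supersolution) and $t=t_0$ (for the subsolution). The one genuinely load-bearing device is the time shift $s_0$: the raw construction of Section~\ref{supersol} only produces a supersolution on $[t_0,\infty)$, so without shifting one would need an ad hoc argument to control $\Gamma_t(v)$ on $[0,t_0]$; the shift makes the short-time and long-time cases of the lemma fall out of a single comparison.
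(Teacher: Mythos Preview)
Your argument is correct, and for the lower bound it matches the paper's use of the subsolution from Section~\ref{subsol}. For the upper bounds, however, you take a different route. The paper derives the pointwise estimate $v\leq C|x|^{2-n}$ separately, by comparing the variational solution against the stationary barrier $CF(x)$ via the weak comparison principle (invoking \cite[Proposition~2.2]{QV}), and for the short-time support bound it simply cites \cite[Lemma~3.6]{K3}; only the large-time support bounds come from the barriers of Sections~\ref{supersol}--\ref{subsol}. Your time-shift $t\mapsto t+s_0$ replaces both external citations by a single viscosity comparison valid from $t=0$: the shift is exactly what is needed to keep $\Theta\geq 1$ on $K$ down to $t=0$, since $t^{(2-n)/n}\to\infty$ as $t\to 0^+$ would otherwise force the unshifted supersolution to vanish there. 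This makes your proof more self-contained and gives the pointwise and support bounds in one stroke, at the cost of a little extra bookkeeping with the constants; the paper's route is shorter if one is willing to quote the two outside results.
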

	\begin{proof}
    We deduce the bound for $v(x,t)$ first. Let $F(x)$ be the fundamental solution of the elliptic equation \eqref{elliptic eq} as in Section \ref{sec: sub and super sol}. Then $\hat{\theta}=CF(x)$ is a stationary solution of the equation $v_t- \mathcal{L}v
    =0$. Its integral in time is also a solution of the variational inequality problem with $\hat{f}=CF(x)$. If we take $C$ large enough then $\hat{f}\geq f$ and $\hat{\theta}\geq 1$ on $K$. Applying the comparison principle for the variational problem, \cite[Proposition 2.2]{QV}, we have $v(x,t) \leq CF(x) \leq \tilde{C}|x|^{2-n}$ by \eqref{bound for fund.sol n>2}.

		The bound on the support of $v(\cdot,t)$ at all times has been proved in \cite[Lemma~3.6]{K3}.

Now consider $\theta_1, \theta_2$ that are respectively a subsolution and a supersolution of the Stefan problem \eqref{Stefan} for $t\geq t_0$ as constructed in Section \ref{supersol} and \ref{subsol}.	The bounds on the support of $v$ for $t \geq t_0$ follow directly from the behavior of the supports of $\theta_1, \theta_2$.
	\end{proof}
\subsection{Limit problems}
The expected limit problem is the corresponding Hele-Shaw type problem with a point source.
\subsubsection{Limit problem for $v^\lambda$}
We expect $v^\lambda$ to converge to a solution of
\begin{equation}
\label{Hele_shaw point source problem}
\left\{
\begin{aligned}
\mathcal{L}^0 v &=0 &&\mbox{ in } \{v>0\},\\
\frac{v_t}{|Dv|}&= \displaystyle \frac1L q_{ij}D_jv \nu_i &&\mbox{ on } \partial \{v>0\},\\
\lim_{|x| \rightarrow 0}\displaystyle \frac{v}{F^0}&=C,\\
v(x,0)&=0 &&\mbox{ in } \mathbb{R}^n \setminus \{0\},
\end{aligned}
\right.
\end{equation}
where $C,L$ are positive constants, $q_{ij}$ are constants of the operator $\mathcal{L}^0
$ and $F^0$ is the fundamental solution of \eqref{elliptic hom eq}.

Since $Q:=(q_{ij})$ is symmetric and positive definite, we can write $Q=P^2$, where $P$ is also a symmetric positive definite matrix. Let $\tilde{v}(x,t):=v(Px,t)$. A direct computation then shows that the problem \eqref{Hele_shaw point source problem} becomes the classical Hele-Shaw problem with a point source for function $\tilde{v}$,
\begin{equation}
\label{Hele_shaw point source problem classical}
\left\{
\begin{aligned}
\Delta \tilde{v} &=0 &&\mbox{ in } \{\tilde{v}>0\},\\
\tilde{v}_t&= \frac1L |D\tilde{v}|^2 &&\mbox{ on } \partial \{v>0\},\\
\lim_{|x| \rightarrow 0} \frac{\tilde{v}}{|x|^{2-n}}&=C,\\
\tilde{v}(x,0)&=0 &&\mbox{ in } \mathbb{R}^n \setminus \{0\}.
\end{aligned}
\right.
\end{equation}
The problem \eqref{Hele_shaw point source problem classical} has a unique classical solution $\tilde{V}$ which is given explicitly (see \cite{P1,PV},  for instance). Thus \eqref{Hele_shaw point source problem} has unique classical solution $V(x,t):= \tilde{V}(P^{-1}x,t)$, which is continuous in $(\Rn \setminus \{0\}) \times [0,\infty)$.
\subsubsection{Limit problem for $u^\lambda$}
\label{sec: limit variational problem}
Suppose that  $V=V_{C,L}$ is the classical solution of \eqref{Hele_shaw point source problem} above and set
\begin{equation}
\label{limit function U}
U(x,t):= \int_0^t V(x,s)\ds.
\end{equation}
It is known that the time integral of the solution of the classical Hele-Shaw problem with a point source \eqref{Hele_shaw point source problem classical} satisfies an obstacle problem derived in \cite{P1}. Following \cite{P1} and using a change of variables again,  we see that $U$ uniquely solves the following problem, which is our limit variational problem:
\begin{equation}
\label{Limit problem for variational solution}
\left\{
\begin{aligned}
w &\in \mathcal{K}_t,\\
q(w,\phi) &\geq \left<-L,\phi\right >,&& \forall \phi \in W_1,\\
q(w, \psi w) &= \left<-L,\psi w\right>, &&\forall \psi \in W_2,
\end{aligned}
\right.
\end{equation}
where
$$\mathcal{K}_t= \left \{ \varphi \in \bigcap_{\varepsilon>0}H^1(\mathbb{R}^n \setminus B_\varepsilon) \cap C(\mathbb{R}^n \setminus B_\varepsilon): \varphi \geq 0, \lim_{|x| \rightarrow 0}\frac{\varphi(x)}{F^0(x)}=Ct\right \},$$
\begin{align}
\label{set V}
W_1&=\left\{ \phi \in H^1(\mathbb{R}^n \setminus B_\varepsilon):\phi \geq 0, \phi = 0 \mbox{ on } B_\varepsilon \mbox{ for some } \varepsilon >0\right\},\\
\label{set W}
W_2&=W_1 \cap C^1(\mathbb{R}^n).
\end{align}
\subsubsection{Near-field limit}
		Using the boundedness provided by Lemma~\ref{viscos free boundary bound}, we have the following general near-field limit result similar to \cite{QV}.

		\begin{theorem}[Near-field limit]
			\label{Near field limit Theorem} The viscosity solution $v$ of the Stefan problem (\ref{Stefan}) converges to the unique solution $P = P(x)$ of the exterior Dirichlet problem
			\begin{equation}
			\label{near filed limit}
			\left\{
			\begin{aligned}
			\mathcal{L} P&=0, && x \in \mathbb{R}^n \setminus  K,\\
			P&=1, && x \in K,\\
			\lim_{|x| \rightarrow \infty}P(x)&=0,
			\end{aligned}
			\right.
			\end{equation}
			as $t \rightarrow \infty$ uniformly on compact subsets of $\overline{K^\compl}$.
			\end{theorem}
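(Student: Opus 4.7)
The plan is to follow the near-field scheme of \cite{QV,P1,PV} adapted to the divergence-form operator $\mathcal{L}$, showing $v(x,t)\to P(x)$ via half-relaxed limits and the uniqueness of the exterior Dirichlet problem \eqref{near filed limit}. Existence and uniqueness of $P$ itself is standard: construct $P$ by Perron or variational methods, and observe that by comparison with a suitable multiple of the fundamental solution $F$ one gets $0 \leq P(x) \leq C|x|^{2-n}$, so $P(x) \to 0$ as $|x| \to \infty$; uniqueness then follows from the maximum principle for $\mathcal{L}$ applied on $B_R \setminus K$ and letting $R \to \infty$.

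Next, I would introduce the upper and lower relaxed limits as $t \to \infty$,
\begin{align*}
\bar v(x) &:= \lim_{T \to \infty} \sup\{v(y,s)\,:\, |y-x|<T^{-1},\, s\geq T,\, y\in \overline{K^\compl}\},\\
\underline v(x) &:= \lim_{T \to \infty} \inf\{v(y,s)\,:\, |y-x|<T^{-1},\, s\geq T,\, y\in \overline{K^\compl}\},
\end{align*}
and argue that $\bar v \equiv \underline v \equiv P$ on $\overline{K^\compl}$, which is equivalent to the asserted locally uniform convergence. Lemma~\ref{viscos free boundary bound} (through comparison with the time-independent supersolution $CF$) yields the upper bound $v(x,t) \leq C|x|^{2-n}$ uniformly in $t$, hence $\bar v(x)\to 0$ as $|x|\to\infty$. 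Conversely, from the subsolution $\theta$ constructed in Section~\ref{subsol} and the Stefan comparison principle we obtain $v(x,t)\geq \theta(x,t)$ for $t\geq t_0$; since $\theta(x,t) \to c_1 F(x) > 0$ pointwise, this gives $\underline v(x) \geq c_1 F(x) > 0$ in $\overline{K^\compl}\setminus\{0\}$. The inner free-boundary bound $\min_{\Gamma_t(v)}|x| \geq C_1 t^{1/n}$ from the same lemma also ensures that the positive phase of $v(\cdot,t)$ swallows any fixed compact subset of $\overline{K^\compl}$ for all sufficiently large $t$, so $v_t-\mathcal{L}v=0$ holds classically on such a set.

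The key step is the standard viscosity stability with time-independent test functions: since only interior tests appear in the limit (the free boundary has receded to infinity), $\bar v$ is a viscosity (equivalently weak, by ellipticity) subsolution and $\underline v$ a supersolution of $-\mathcal{L}w=0$ in the interior of $K^\compl$, with $\bar v = \underline v = 1$ on $\partial K$ by continuity of the Dirichlet data. Applying the comparison principle for the exterior Dirichlet problem twice against $P$ — using that $\bar v$, $\underline v$, and $P$ all decay at infinity — yields $\bar v \leq P \leq \underline v$. Combined with the trivial $\underline v \leq \bar v$, this forces $\bar v = \underline v = P$, which is equivalent to $v(\cdot,t)\to P$ uniformly on compact subsets of $\overline{K^\compl}$.

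The main obstacle is the stability step: one must verify that the test inequalities in the parabolic viscosity definition pass to the $t\to\infty$ half-relaxed limit, crucially using that the limits are time-independent and that eventually the positive set covers any prescribed compact subset of $\overline{K^\compl}$ so that no free-boundary tests enter in the limit. The decay at infinity inherited from the bound $C|x|^{2-n}$ is precisely what activates the exterior comparison principle and pins down the limit to be $P$; the rest of the argument is routine manipulation of the half-relaxed limits as in \cite{QV,PV}.
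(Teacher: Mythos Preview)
Your proposal is correct and aligns with the paper's own proof, which simply defers to \cite[Lemma~8.4]{QV} together with the observation (from Lemma~\ref{viscos free boundary bound}) that the support of $v$ eventually covers any compact set. Your outline via half-relaxed limits, decay at infinity from the barrier $CF$, and the exterior elliptic comparison principle is a faithful unpacking of that reference; the only minor caveat is that ``$v_t-\mathcal{L}v=0$ holds classically'' should read ``weakly/in the viscosity sense'' since the coefficients are merely Lipschitz, but this does not affect the argument.
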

			\begin{proof}
		Follow the arguments in proof of \cite[Lemma~8.4]{QV} and note that by Lemma~\ref{viscos free boundary bound} the support of $v$ expands to the whole space as time $t \to \infty$.
				\end{proof}

	The results on the isolated singularity of solutions of linear elliptic equations in \cite{SW} allow us to deduce the asymptotic behavior of $P$ as $|x| \to \infty$.
				\begin{lemma}
					\label{C*}
					There exists a constant $C_*=C_*(K, n)$ such that the solution $P$ of the problem (\ref{near filed limit}) satisfies
          $$\lim_{|x| \rightarrow \infty}\frac{P(x)}{F(x)}=C_*,$$
					where $F(x)$ is the fundamental solution of the elliptic equation $-\mathcal{L}v=0$ in $\Rn$.
					\end{lemma}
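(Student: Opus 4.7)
The plan is to realise $P$ as a Green potential of an equilibrium measure concentrated on $\partial K$ and then pass to the limit using the asymptotic expansion of $G$ from Lemma~\ref{Asymptotic expansion fund.sol}. Since $P$ is bounded, positive and $\mathcal{L}$-harmonic in $\mathbb{R}^n\setminus K$ with $P=1$ on $\partial K$ and $P(x)\to 0$ at infinity, potential theory for divergence-form elliptic operators yields a unique nonnegative finite Borel measure $\sigma$ on $\partial K$ such that
$$P(x)=\int_{\partial K}G(x,y)\,d\sigma(y),\qquad x\in \mathbb{R}^n\setminus K.$$
I would then set $C_*:=\sigma(\partial K)$, which depends only on $K$ and $n$ (the coefficients $a_{ij}$ being fixed throughout the paper).

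The analytic heart of the argument is the uniform limit
$$\lim_{|x|\to\infty}\frac{G(x,y)}{F(x)}=1\qquad\text{uniformly in } y\in K.$$
By \eqref{asymptotic fund.sol}, $G(x,y)=G^0(x,y)+|x-y|^{2-n}\theta(x,y)$ with $\theta\to 0$ on the sector $\{|x|+|y|<a|x-y|\}$, and this restriction is automatic for $y$ ranging in the compact set $K$ once $|x|$ is large. The explicit formula \eqref{fundamental const coef} for $G^0$ gives by direct computation $G^0(x,y)/G^0(x,0)\to 1$ uniformly in $y\in K$; applying \eqref{asymptotic fund.sol} again at $y=0$ yields $F(x)/G^0(x,0)\to 1$, hence $G^0(x,y)/F(x)\to 1$ uniformly. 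The remainder is controlled since $|x-y|^{2-n}/F(x)$ is uniformly bounded for $|x|$ large and $y\in K$ by \eqref{bound for fund.sol n>2}, while $\theta\to 0$.

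With this in hand, I would divide the representation of $P$ by $F(x)$ and pass to the limit under the integral: the integrand is uniformly bounded by \eqref{bound for fund.sol n>2} and converges uniformly to $1$ on the compact $\partial K$, and $\sigma$ is a finite measure, so dominated convergence gives
$$\frac{P(x)}{F(x)}\ =\ \int_{\partial K}\frac{G(x,y)}{F(x)}\,d\sigma(y)\ \longrightarrow\ \sigma(\partial K)\ =\ C_*.$$

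The main obstacle I anticipate is justifying the Green potential representation, essentially the dual formulation of the exterior Dirichlet problem through the $\mathcal{L}$-capacity, which is classical but not recorded in the excerpt. An alternative route that bypasses it is to invoke the Serrin–Weinberger classification \cite{SW} of positive solutions at the point at infinity: the two-sided barriers $c_1 F(x)\le P(x)\le c_2 F(x)$, obtained by comparing $P$ to constant multiples of $F$ on large annuli via the maximum principle and the boundary values on $\partial K$, rule out the removable case and force $P/F$ to tend to a positive constant $C_*$ depending only on $K$ and $n$.
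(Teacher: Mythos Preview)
Your alternative route at the end—invoking Serrin--Weinberger \cite{SW} after sandwiching $P$ between multiples of $F$—is precisely the paper's proof; the paper adds only that a generalized Kelvin transform (following \cite[Lemma~4.3]{QV}) is used to convert the singularity at infinity into a finite isolated singularity, to which \cite[Theorem~5]{SW} applies verbatim.

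Your primary approach via the Green-potential representation is a genuinely different, correct, and self-contained argument. Compared to the paper's, it identifies $C_*$ concretely as the $\mathcal{L}$-capacity $\sigma(\partial K)$ of $K$ and avoids the Kelvin-type transformation, whose effect on a variable-coefficient divergence-form operator requires some checking. The representation you flag as the obstacle is justifiable here from Green's second identity on $B_R\setminus K$: with $\partial K\in C^{1,1}$ and Lipschitz $a_{ij}$ the conormal derivative of $P$ on $\partial K$ exists, the boundary term involving the conormal derivative of $G(x,\cdot)$ on $\partial K$ vanishes since $G(x,\cdot)$ is $\mathcal{L}$-harmonic inside $K$, and the $\partial B_R$ contributions are $O(R^{2-n})$ by the decay of $P$, $|DP|$, $G$, $|DG|$. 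The paper's route is shorter only because it outsources the work to \cite{SW}.
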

					\begin{proof}
				Lemma~\ref{C*} is a direct corollary of \cite[Theorem 5]{SW}. The arguments follow the same techniques as in \cite[Lemma~4.3]{QV} using a general Kelvin transform and Green's function for linear elliptic equations. Following \cite[Lemma~4.3]{QV}, it can also be shown that the constant $C_*$ depends continuously on the data of the fixed boundary $\Gamma=\p{K}$.
						\end{proof}

\section{Uniform convergence of the rescaled variational solutions}
The purpose of this section is to show the first main result on the uniform convergence of the rescaled variational solutions, which is similar to \cite[Theorem 3.1]{PV}.
\begin{theorem}
	\label{convergence of variational solutions}
	Let $u$ be the unique solution of variational problem (\ref{Variational problem}) and
	$u^\lambda$ be its rescaling. Let $U_{A,L}$ be the unique solution of limit problem \eqref{Limit problem for variational solution} where $A=C_*$ as in Lemma~\ref{C*}, and $L=\left<\frac{1}{g}\right>$ as in Lemma~\ref{media}. Then the functions $u^\lambda$ converge locally uniformly to $U_{A,L}$ as $\lambda \rightarrow \infty$ on $\left(\mathbb{R}^n \setminus \{0\}\right) \times \left[0,\infty\right)$.
\end{theorem}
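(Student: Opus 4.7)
The plan has three steps: (i) establish local compactness of the family $\{u^\lambda\}$ on $(\Rn \setminus \{0\}) \times [0,\infty)$ using the barrier construction of Section~\ref{sec: sub and super sol}; (ii) identify any subsequential locally uniform limit $\bar u$ as a solution of the limit obstacle problem \eqref{Limit problem for variational solution} by passing to the limit in \eqref{rescaled Variational problem} via $\Gamma$-convergence of the rescaled bilinear forms; and (iii) invoke uniqueness of \eqref{Limit problem for variational solution} to promote subsequential convergence to convergence of the full family.

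For Step~(i), the supersolution \eqref{barrier form} gives $v \le C\,F$ globally, and after rescaling $v^\lambda \le CF^\lambda \le C'|x|^{2-n}$ uniformly in $\lambda \ge 1$ (Lemma~\ref{barrier convergence of rescaled }). Integrating in time and combining with Lemma~\ref{barrier integration in time lemma} bounds $u^\lambda$ on compact subsets of $(\Rn \setminus \{0\}) \times [0,\infty)$ and pins its near-origin profile. Time Lipschitz continuity follows from $u^\lambda_t = v^\lambda$ being locally bounded (Proposition~\ref{boundedness of weak solution}), and spatial equicontinuity from interior elliptic regularity applied to the obstacle problem where coefficients are uniformly elliptic. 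Arzel\`a--Ascoli extracts a locally uniformly convergent subsequence $u^{\lambda_k} \to \bar u$.

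For Step~(ii), fix an open $\Omega \Subset \Rn \setminus \{0\}$. Three limiting facts must be combined when passing to the limit in \eqref{rescaled Variational problem}. First, the parabolic term $\lambda^{(2-n)/n} u^\lambda_t = \lambda^{(2-n)/n} v^\lambda$ vanishes locally uniformly on $\Omega$ since $n \ge 3$ and $v^\lambda$ is locally bounded. Second, by the Dal Maso--Modica theory cited in the introduction, the rescaled forms $a^\lambda_\Omega$ $\Gamma$-converge to the constant-coefficient homogenized form $q_\Omega$ with matrix $(q_{ij})$ from Lemma~\ref{Asymptotic expansion fund.sol}. Third, by periodicity, $f^\lambda \rightharpoonup -\langle 1/g \rangle = -L$ in $L^2_{\mathrm{loc}}(\Omega)$, because $\Omega_0^\lambda \to \{0\}$ so that $f^\lambda = -1/g(\lambda^{1/n}\cdot)$ on $\Omega$ for $\lambda$ large, and $1/g$ has mean $L$. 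Testing \eqref{rescaled Variational problem} against admissible competitors and passing to the limit yields $q_\Omega(\bar u, \phi) \ge \langle -L,\phi\rangle_\Omega$ for $\phi \in W_1$; the complementary saturation equality $q(\bar u, \psi \bar u) = \langle -L, \psi \bar u\rangle$ for $\psi \in W_2$ follows by testing with $\varphi = (1 \pm \varepsilon \psi) u^\lambda$ and sending $\lambda \to \infty$ then $\varepsilon \to 0$.

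The main obstacle, and the reason a delicate modification is required, is the incompatibility of admissible sets: $\mathcal{K}^\lambda(t)$ imposes the trace $\lambda^{(n-2)/n} t$ on the shrinking set $K^\lambda$, whereas $\mathcal{K}_t$ merely requires $\varphi/F^0 \to C_* t$ as $|x| \to 0$. To convert an arbitrary $\phi \in W_1$ (which vanishes in a fixed neighborhood of the origin) into a $\lambda$-admissible competitor, I glue $\phi$ to $u^\lambda$ itself across a narrow annulus around the origin by means of a smooth cut-off $\chi_\varepsilon$, producing $\phi^\lambda \in \mathcal{K}^\lambda(t)$ that coincides with $\phi$ away from the origin. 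The fundamental estimate (Definition~\ref{def:fundamental-estimate}) for the $\Gamma$-convergent forms $a^\lambda_\Omega$ controls the gluing error $a^\lambda(\phi^\lambda - \phi, \cdot)$ uniformly in $\lambda$ and shows it tends to zero as $\varepsilon \to 0$; combined with the near-field Theorem~\ref{Near field limit Theorem}, Lemma~\ref{C*}, and Lemma~\ref{rescaled fund.sol}, which together force $u^\lambda(x,t)/F^0(x) \to C_* t$ uniformly near the origin, this identifies $\bar u \in \mathcal{K}_t$ as a solution of \eqref{Limit problem for variational solution} with $A = C_*$ and $L = \langle 1/g\rangle$. Uniqueness of this limit problem then forces convergence of the full family $u^\lambda \to U_{C_*,L}$ and concludes the proof.
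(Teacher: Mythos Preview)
Your three-step plan (compactness, identification, uniqueness) matches the paper's, and Step~(i) and the singularity verification $\bar u/F^0 \to C_* t$ (which is the paper's Lemma~\ref{singularity of U}) are essentially correct. The gap is in Step~(ii): you do not explain how to pass to the limit in the \emph{bilinear form} $a^\lambda_\Omega(u^\lambda,\cdot)$, and your proposed competitors do not make this possible.

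$\Gamma$-convergence of the energies $J^\lambda\to J^0$ gives you only a $\liminf$ inequality for $J^\lambda(u^{\lambda_k})$ and convergence along a recovery sequence; it does \emph{not} give $a^\lambda(u^\lambda,\phi)\to q(\bar u,\phi)$ for a fixed test function, nor does it give convergence of $a^\lambda(u^\lambda,\psi u^\lambda)$ when you test with $(1\pm\varepsilon\psi)u^\lambda$ --- the oscillating coefficients and the oscillating gradient $Du^\lambda$ interact, and the weak limit of the flux $a_{ij}^\lambda D_j u^\lambda$ is not $\langle a_{ij}\rangle D_j\bar u$ but $q_{ij}D_j\bar u$, which you cannot read off from your test. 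Your use of the fundamental estimate to ``control the gluing error $a^\lambda(\phi^\lambda-\phi,\cdot)$'' is also off: the fundamental estimate (Definition~\ref{def:fundamental-estimate}) bounds \emph{energies} of cut-off combinations, not bilinear forms against a fixed direction.

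The paper's mechanism (Lemma~\ref{limit function satisfies limit problem}) fixes both issues at once. Rather than test with $u^\lambda+\phi$ or $(1\pm\varepsilon\psi)u^\lambda$, one sets $\bar\varphi=\bar w+\phi$ (resp.\ $(1-\psi)\bar w$), takes an abstract $\Gamma$-convergence recovery sequence $\varphi^k\to\bar\varphi$ in $L^2$ with $J^{\lambda_k}(\varphi^k)\to J^0(\bar\varphi)$, and then uses the fundamental estimate to glue $\varphi^k$ to $w^k:=u^{\lambda_k}$ outside a compact subset of $\Omega$, producing $\tilde\varphi^k\in\mathcal K^{\lambda_k}(t)$ that is still a recovery sequence. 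The variational inequality is then combined with the elementary inequality
\[
a^{\lambda_k}(w^k,\tilde\varphi^k-w^k)\le \tfrac12 J^{\lambda_k}(\tilde\varphi^k)-\tfrac12 J^{\lambda_k}(w^k),
\]
so that in the $\liminf$ one needs only $\lim J^{\lambda_k}(\tilde\varphi^k)=J^0(\bar\varphi)$ (recovery) and $\liminf J^{\lambda_k}(w^k)\ge J^0(\bar w)$ ($\Gamma$-liminf). A final scaling $\phi\mapsto\delta\phi$ and $\delta\to0$ linearizes the resulting energy inequality into $q(\bar w,\phi)\ge\langle -L,\phi\rangle$. This energy-inequality trick is the missing idea in your proposal; once you insert it (and redirect the fundamental estimate to the recovery sequence rather than to $\phi$), the rest of your outline goes through.
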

The classical homogenization results of variational inequalities are usually stated for a fixed bounded domain. Since our admissible set $\mathcal{K}^\lambda(t)$ defined in Section~\ref{sec: rescaling} changes with $\lambda$, we will need to refine the proof. We will use the techniques of the $\Gamma$-convergence introduced in \cite{D} and \cite{K3}. Note that these techniques can be applied not only for the periodic case but also for stationary ergodic coefficients over a probability space $(A,\mathcal{F},P)$.
\subsection{The averaging property of media and the \texorpdfstring{$\Gamma$}{Gamma}-convergence}
We recall the following lemma on the averaging property of periodic media, which also holds for more general stationary ergodic media.
\begin{lemma}[cf. {\cite[Section 4, Lemma~7]{K2}, see also \cite{P1}}]
	\label{media}
	For a given $g$ satisfying  \eqref{condition in media}, there exists a constant, denoted by
	$\left<\frac{1}{g}\right>$, such that if $\Omega \subset \mathbb{R}^n$ is a bounded measurable set and if
	$\{u^\varepsilon\}_{\varepsilon>0} \subset L^2(\Omega)$ is a family of functions such that $u^\varepsilon \rightarrow u$ strongly in $L^2(\Omega)$ as $\varepsilon \rightarrow 0$, then
	\begin{equation*}
	\lim_{\varepsilon\rightarrow 0} \int_{\Omega}\frac{1}{g\left(\frac{x}{\varepsilon}\right)}u^\varepsilon(x)\dx
	=\int_{\Omega}\left<\frac{1}{g}\right>u(x)\dx.
	\end{equation*}
	The quantity $\ang{\frac{1}{g}}$ in periodic setting is the average of $\frac{1}{g}$ over one period.
\end{lemma}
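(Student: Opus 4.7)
The plan is to prove the averaging property by combining a weak convergence statement for $1/g(\cdot/\varepsilon)$ alone with the assumed strong convergence of $u^\varepsilon$, so as to use the classical fact that the product of a weakly convergent and a strongly convergent sequence converges in $L^1$.

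First, I would define the constant as the cell average
\begin{equation*}
  \left\langle \tfrac{1}{g}\right\rangle := \int_{[0,1]^n} \frac{1}{g(y)}\dy,
\end{equation*}
which makes sense because the assumption \eqref{condition in media} ensures that $h(y) := 1/g(y)$ is $\mathbb{Z}^n$-periodic, measurable, and bounded with $1/M \le h \le 1/m$. In particular $h \in L^\infty(\mathbb{R}^n)$, so $h(\cdot/\varepsilon) \in L^\infty(\Omega)$ uniformly in $\varepsilon$.

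The central step is then to establish the weak-$\ast$ convergence
\begin{equation*}
  h\!\left(\tfrac{x}{\varepsilon}\right) \rightharpoonup \left\langle \tfrac{1}{g}\right\rangle \quad \text{in } L^\infty(\Omega)\text{-weak-}\ast \text{ as } \varepsilon \to 0.
\end{equation*}
Equivalently, $\int_\Omega h(x/\varepsilon)\varphi(x)\dx \to \langle 1/g\rangle \int_\Omega \varphi\dx$ for every $\varphi \in L^1(\Omega)$. By density, it suffices to verify this for $\varphi = \chi_Q$ with $Q$ an axis-aligned cube contained in $\Omega$. For such $Q$, the change of variables $y = x/\varepsilon$ gives
\begin{equation*}
  \int_Q h\!\left(\tfrac{x}{\varepsilon}\right)\dx = \varepsilon^n \int_{Q/\varepsilon} h(y)\dy,
\end{equation*}
and decomposing $Q/\varepsilon$ into a disjoint union of unit cells plus a boundary layer of width $1$, and using $\mathbb{Z}^n$-periodicity of $h$ on each complete cell, yields $\varepsilon^n|Q/\varepsilon|\langle 1/g\rangle + O(\varepsilon)$, which converges to $|Q|\langle 1/g\rangle$. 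One then extends from cubes to measurable subsets of $\Omega$ by approximating from inside and outside by finite unions of cubes, and from there to $L^1(\Omega)$ by density.

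Finally, I would split the quantity of interest as
\begin{equation*}
  \int_\Omega h\!\left(\tfrac{x}{\varepsilon}\right) u^\varepsilon\dx - \int_\Omega \left\langle \tfrac{1}{g}\right\rangle u\dx = \int_\Omega h\!\left(\tfrac{x}{\varepsilon}\right)(u^\varepsilon - u)\dx + \int_\Omega \left(h\!\left(\tfrac{x}{\varepsilon}\right) - \left\langle \tfrac{1}{g}\right\rangle\right) u\dx.
\end{equation*}
The first term tends to $0$ by Cauchy--Schwarz together with the uniform bound $\|h(\cdot/\varepsilon)\|_{L^\infty} \le 1/m$ and the assumed $L^2$-strong convergence $u^\varepsilon \to u$. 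The second term tends to $0$ by the weak-$\ast$ convergence above tested against $u \in L^2(\Omega) \subset L^1(\Omega)$ (using that $\Omega$ is bounded). The main technical point is really the cube computation for the periodic weak-$\ast$ limit; everything else is a density argument plus the standard weak-times-strong principle, and there are no obstacles particular to the Stefan setting here.
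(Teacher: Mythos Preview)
Your proof is correct and follows the standard route for this classical averaging lemma: establish weak-$\ast$ convergence of the periodic function $1/g(\cdot/\varepsilon)$ to its cell average by the cube-counting argument, then combine with the strong $L^2$ convergence of $u^\varepsilon$ via the splitting you wrote. There is nothing to compare against in the paper itself, since the paper does not prove this lemma but simply cites it from \cite{K2} (and \cite{P1}); your argument is essentially the one underlying those references.
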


We also need some basic concepts and results of the $\Gamma$-convergence which are taken from \cite{D}.
Let $\Omega$ be a bounded open set in $\Rn$. Consider the functional
\begin{equation}
\label{functionals in Gamma convergence}
J^\lambda(u,\Omega):=\left\{
\begin{aligned}
&\int_{\Omega}a_{ij}(\lambda^{\frac{1}{n}}x)D_iuD_ju\dx &&\mbox{ if } u \in H^1(\Omega),\\
&\infty &&\mbox{ otherwise}.
\end{aligned}
\right.
\end{equation}

\begin{definition}[cf. {\cite[Proposition 8.1]{D}}]
	Let X be a metric space. A sequence of functionals $F_h$ is said to $\Gamma(X)$-converge to $F$ if the following conditions are satisfied:\begin{enumerate}[label=(\roman*)]
		\item For every $u \in X$ and for every sequence $(u_h)$ converging to $u$ in $X$, we have
		\begin{equation*}
		F(u) \leq \liminf_{h \rightarrow 0}F_h(u_h).
		\end{equation*}
		\item For every $u \in X$, there exists a sequence $(u_h)$ converging to $u$ in $X$, such that
		\begin{equation*}
		F(u)=\lim_{h\rightarrow 0}F_h(u_h).
		\end{equation*}
	\end{enumerate}
\end{definition}
It is known that the $\Gamma(L^2)$-convergence of $J^\lambda$ is equivalent to the $G$-convergence of elliptic operator $\mathcal{L}^\lambda$ (see \cite[Theorem 22.4]{D} and \cite[Theorem 4.3]{K3}) and we have a crucial result on Gamma-convergence of $J^\lambda$ as follows.
\begin{theorem}[cf. {\cite[Theorem 4.3]{K3}}]
	\label{gamma convergence}
	The functionals $J^\lambda$ $\Gamma(L^2)$-converge  as $\lambda \rightarrow \infty$ to a functional $J^0$, where $J^0$ is a quadratic functional of the form
	\begin{equation*}
	J^0(u):=\left\{
	\begin{aligned}
	&\int_{\Omega}q_{ij}D_iuD_ju\dx &&\mbox{ if } u \in H^1(\Omega),\\
	&\infty &&\mbox{ otherwise}.
	\end{aligned}
	\right.
	\end{equation*}
	Here the constants $q_{ij}$ are the coefficients of the limit operator $\mathcal{L}^0$ as in Lemma~\ref{Asymptotic expansion fund.sol}.
\end{theorem}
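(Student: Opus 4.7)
The plan is to deduce Theorem \ref{gamma convergence} from classical periodic homogenization via the cell-problem / corrector method; see \cite[Ch.\ 24]{D} for the general framework. Writing $\varepsilon = \lambda^{-1/n}$, the functional takes the form $J^\lambda(u,\Omega) = \int_\Omega a_{ij}(x/\varepsilon) D_i u D_j u\,dx$ with $\mathbb{Z}^n$-periodic $a_{ij}$, so standard homogenization machinery applies, and it suffices to identify the homogenized coefficients and verify the two defining inequalities of $\Gamma$-convergence.

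First I would set up the cell problem on the torus $\mathbb{T}^n := \mathbb{R}^n/\mathbb{Z}^n$: for each $k=1,\ldots,n$, let $\chi_k$ be the unique zero-mean solution in $H^1_\#(\mathbb{T}^n)$ of
\begin{equation*}
-D_i\bigl(a_{ij}(y)(D_j\chi_k(y) + \delta_{jk})\bigr) = 0 \qquad \text{in } \mathbb{T}^n,
\end{equation*}
supplied by Lax--Milgram and \eqref{ellipticity}, and set
\begin{equation*}
q_{ik} := \int_{\mathbb{T}^n} a_{ij}(y)(D_j\chi_k(y) + \delta_{jk})\,dy.
\end{equation*}
Testing the cell equation for $\chi_k$ against $\chi_i \in H^1_\#(\mathbb{T}^n)$ and using symmetry of $A$ rewrites $q_{ik} = \int a_{jl}(D_l\chi_k + \delta_{lk})(D_j\chi_i + \delta_{ji})\,dy$, yielding $q_{ik} = q_{ki}$; the variational characterization $q_{ij}\xi_i\xi_j = \inf_{\psi \in H^1_\#(\mathbb{T}^n)} \int a_{kl}(y)(\xi_k + D_k\psi)(\xi_l + D_l\psi)\,dy$ together with \eqref{ellipticity} gives the two-sided bound $\alpha|\xi|^2 \le q_{ij}\xi_i\xi_j \le \beta|\xi|^2$, so $(q_{ij})$ is symmetric and positive definite.

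For the $\Gamma$-liminf, take $u^\lambda \to u$ in $L^2(\Omega)$ with $\liminf J^\lambda(u^\lambda) < \infty$; ellipticity yields a uniform $H^1(\Omega)$-bound, hence $u \in H^1(\Omega)$ and $u^\lambda \rightharpoonup u$ weakly in $H^1$ along a subsequence. I would use Tartar's oscillating-test-function method. For each $k$, the vector field
\begin{equation*}
\Phi_k^\varepsilon(x) := \bigl(a_{ij}(x/\varepsilon)(D_j\chi_k(x/\varepsilon) + \delta_{jk})\bigr)_{i=1}^n
\end{equation*}
is divergence-free by the cell equation and, since its components are periodic with cell-average $q_{ik}$, converges weakly in $L^2(\Omega)$ to the constant vector $(q_{ik})_i$. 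Combining with the pointwise quadratic inequality
\begin{equation*}
\int_\Omega a_{ij}^\varepsilon D_i u^\lambda D_j u^\lambda\,dx \ge 2\int_\Omega a_{ij}^\varepsilon D_i u^\lambda D_j w^\varepsilon\,dx - \int_\Omega a_{ij}^\varepsilon D_i w^\varepsilon D_j w^\varepsilon\,dx
\end{equation*}
applied to $w^\varepsilon(x) := \varphi(x) + \varepsilon \chi_k(x/\varepsilon) D_k\varphi(x)$ for $\varphi \in C_c^\infty(\Omega)$, the div-curl compensated compactness lemma (applied to $\Phi_k^\varepsilon$ and the curl-free $Du^\lambda$) passes the cross term to $2\int q_{ik} D_i u D_k\varphi\,dx$, while periodic averaging sends the last term to $\int q_{ik} D_i\varphi D_k\varphi\,dx$. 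Supremizing over $\varphi$ (using an exhaustion of $\Omega$ by compactly contained subdomains together with density of smooth functions) gives $J^0(u) \le \liminf J^\lambda(u^\lambda)$.

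For the recovery sequence, for $u \in C_c^\infty(\Omega)$ set $u^\lambda(x) := u(x) + \varepsilon\chi_k(x/\varepsilon) D_k u(x)$; then $u^\lambda \to u$ strongly in $L^2$, and expanding $D_i u^\lambda$ and averaging the periodic integrand $a_{ij}(y)(D_j\chi_k + \delta_{jk})(D_i\chi_l + \delta_{il})$, whose cell-average equals $q_{kl}$ by the cell equation, gives $J^\lambda(u^\lambda) \to J^0(u)$; a boundary cutoff plus density of smooth functions extend this to $u \in H^1(\Omega)$. The main obstacle is the $\Gamma$-liminf, specifically the passage to the limit in the cross term: both $Du^\lambda$ and the oscillating flux $\Phi_k^\varepsilon$ converge only weakly in $L^2$, and it is precisely the divergence-free structure of $\Phi_k^\varepsilon - q_{\cdot k}$ built into the cell problem, combined with div-curl compensated compactness, that makes their pairing converge to the product of the limits. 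Everything else is algebra or standard approximation.
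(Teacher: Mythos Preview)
The paper does not supply its own proof of this theorem: it is stated as a citation of \cite[Theorem~4.3]{K3}, with the surrounding text pointing to the equivalence between $\Gamma(L^2)$-convergence of the quadratic functionals and $G$-convergence of the operators $\mathcal{L}^\lambda$ via \cite[Theorem~22.4]{D}. So there is nothing to compare against beyond those references.

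Your outline is the standard corrector/cell-problem derivation of periodic homogenization for quadratic integral functionals, essentially the content of \cite[Ch.~24]{D} specialized to the present setting, and it is correct. A couple of minor remarks: first, because the paper assumes $a_{ij}$ Lipschitz in \eqref{condition in media}, the correctors $\chi_k$ are in fact $C^{1,\alpha}$ and bounded, so the recovery-sequence step and the $O(\varepsilon)$ error terms go through without the extra care sometimes needed when correctors are merely $H^1_\#$. Second, in the liminf step the div--curl lemma is being applied to the pair $(\Phi_k^\varepsilon, Du^\lambda)$ to obtain convergence of $\Phi_k^\varepsilon\cdot Du^\lambda$ in the sense of distributions, and the smooth factor $D_k\varphi$ then acts as a test function; you have this right, but it is worth saying explicitly since the cross term you wrote carries that extra factor. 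With those clarifications your argument is complete and matches the route the cited references take.
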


To deal with the Dirichlet boundary condition, we need to use cut-off functions and the \emph{fundamental estimate} below. Here we denote as $\mathcal{A}$ the class of all open subsets of $\Omega$.
	\begin{definition}\cite[Definition 18.1]{D}
		 Let $A',A''\in \mathcal{A}$ with $A' \Subset A''$. We say that a function $\varphi: \Rn \rightarrow \mathbb{R}$ is a \textit{cut-off function} between $A'$ and $A''$ if $\varphi \in C^\infty_0(A''), 0 \leq \varphi \leq 1$ on $\Rn$, and $\varphi =1$ in a neighborhood of $\overline{A'}$	.
	\end{definition}
  \begin{definition}\label{def:fundamental-estimate}\cite[Definition 18.2]{D}
		Let $F: L^p(\Omega) \times \mathcal{A} \rightarrow [0,\infty]$ be a non-negative functional. We say that $F$ satisfies the \textit{fundamental estimate} if for every $\varepsilon>0$ and for every $A',A'',B \in \mathcal{A}$, with $A' \Subset A''$, there exists a constant $M>0$ with the following property: for very $u,v \in L^p(\Omega)$, there exists a cut-off function $\varphi$ between $A'$ and $A''$, such that
		\begin{equation}
		\label{cut-off.est}
		\begin{aligned}
			F(\varphi u + (1-\varphi)v, A' \cup B) &\leq (1+\varepsilon)(F(u,A'')+F(v,B))\\
			&+ \varepsilon(\|u\|^p_{L^p(S)}+\|v\|^p_{L^p(S)}+1)+ M\|u-v\|^p_{L^p(S)},
		\end{aligned}
		\end{equation}
		where $S=(A''\setminus A') \cap B$. Moreover, if $\mathcal{F}$ is a class of non-negative functionals on $L^p(\Omega) \times \mathcal{A}$, we say that the fundamental estimate holds uniformly in $\mathcal{F}$ if each element $F$ of $\mathcal{F}$ satisfies the fundamental estimate with $M$ depending only on $\varepsilon,A',A'',B,$ while $\varphi$ may depend also on $F,u,v$.
	\end{definition}
	The result in \cite[Theorem 19.1]{D} provides a wide class of integral functionals uniformly satisfying the fundamental estimate. In particular, the fundamental estimate holds uniformly in the class of all functionals of the form \eqref{functionals in Gamma convergence}. Thus for every $J^\lambda$, there exists a cut-off function $\varphi$ such that \eqref{cut-off.est} hold with $F=J^\lambda$ and a constant $M$ independent of $\lambda$.
\subsection{Uniform convergence of rescaled variational solutions}
Now we are ready to prove Theorem \ref{convergence of variational solutions}.
\begin{proof}[Proof of Theorem \ref{convergence of variational solutions}]
	For a fixed  $T > 0$, we can bound
	$\Omega_t(u^\lambda)$ by $B(0,R)$ for some $R >0$, for all $0 \leq t \leq
	T$ and $\lambda >0$ by Lemma~\ref{viscos free boundary bound}. We will show the convergence in  $Q_{\varepsilon}:= \left(B(0,R)\setminus
	\overline{B(0,\varepsilon)}\right) \times [0,T]$ for some $\varepsilon >0$.

We argue the same way as in the proof of \cite[Theorem 3.2]{PV}. Using the uniform bound on $u^\lambda, u^\lambda_t$ from Lemma~\ref{viscos free boundary bound} and the standard regularity estimates for an elliptic obstacle problem which hold uniformly in $\lambda$, we obtain a uniform H\"older estimate for $u^\lambda$. Then by the Arzel\`{a}-Ascoli theorem and a diagonalization argument, we can find a function $\bar u \in C((\Rn \setminus \{0\})
	\times [0, \infty))$ and a subsequence $\{u^{\lambda_k}\} \subset
	\{u^\lambda\}$ such that
	\begin{equation*}
	\begin{aligned}
	&u^{\lambda_k} \rightarrow \bar{u} \mbox{ locally uniformly on $(\Rn \setminus \{0\})
		\times [0, \infty)$} \mbox{ as } k
	\rightarrow \infty, \\
	&u^{\lambda_k}(\cdot,t) \rightarrow \bar{u}(\cdot,t) \mbox{ strongly in } H^1(\Omega_\varepsilon) \mbox{ for all } t\geq 0, \varepsilon > 0.
	\end{aligned}
	\end{equation*}

	In the rest of the proof we show that the function $\bar{u}$ solves the limit
  problem \eqref{Limit problem for variational solution},  whose uniqueness then implies the convergence of the full sequence. We start with quantifying the singularity at the origin.
\begin{lemma}
	\label{singularity of U}
	We have
	\begin{equation*}
	\lim_{|x|\rightarrow 0}\frac{\overline{u}(x,t)}{U_{C_*,L}(x,t)}=1.
	\end{equation*}
\end{lemma}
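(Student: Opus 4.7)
The plan is to sandwich $\bar{u}(x,t)$ near the origin between two functions whose leading asymptotics as $|x|\to 0$ are $(C_*\pm\varepsilon)\,F^0(x)\,t$ for an arbitrary $\varepsilon>0$, and then send $\varepsilon\to 0$. Since $V_{C_*,L}(\cdot,s)\sim C_* F^0$ as $|x|\to 0$ by definition of the Hele-Shaw-type limit problem \eqref{Hele_shaw point source problem}, integrating in time gives $U_{C_*,L}(x,t)\sim C_*\,F^0(x)\,t$ near the origin, so matching the leading coefficient yields the limit~$1$.

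First, for any $\varepsilon>0$, I would build a supersolution and a subsolution of \eqref{Stefan} of the form \eqref{barrier form} (and \eqref{subsol formula} for the subsolution) with leading coefficient $C_1=C_*\pm\varepsilon$, following the recipe in Section~\ref{sec: sub and super sol}. The sub/supersolution conditions \eqref{subsolution coef condition 1}-\eqref{subsolution coef condition 2} leave the remaining constants free; I would choose them together with a large starting time $t_0$ so that $\underline\theta(\cdot,t_0)\le v(\cdot,t_0)\le\bar\theta(\cdot,t_0)$. The key input here is Theorem~\ref{Near field limit Theorem} together with Lemma~\ref{C*}: for $t_0\gg 1$, the viscosity solution $v(\cdot,t_0)$ is uniformly close on compacts to $P$, and $P(x)/F(x)\to C_*$ as $|x|\to\infty$. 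The expansion rate from Lemma~\ref{viscos free boundary bound} confines $\Omega_{t_0}(v)$ inside a ball of radius $\sim t_0^{1/n}$, so matching the barriers to $v(\cdot,t_0)$ reduces to checking an inequality between $(C_*\pm\varepsilon)F(x)$ and $v(x,t_0)\approx P(x)$ on an annular region where $P$ is already within $\varepsilon/2$ of $C_*F(x)$. The comparison principle then propagates $\underline\theta\le v\le\bar\theta$ on $\mathbb{R}^n\times[t_0,\infty)$.

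Next, integrating the sandwich in time yields $\int_{t_0}^{t}\underline\theta(x,s)\,ds\le u(x,t)-u(x,t_0)\le\int_{t_0}^{t}\bar\theta(x,s)\,ds$. I would rescale this inequality and pass to $\lambda\to\infty$: Lemma~\ref{barrier convergence of rescaled } gives locally uniform convergence $\theta^\lambda\to\theta^0$, while Lemma~\ref{barrier integration in time lemma} supplies the expansion
\begin{equation*}
\int_{0}^{t}\theta^0(x,s)\,ds=(C_*\pm\varepsilon)F^0(x)\,t-\frac{C_2^{\pm}n}{2}t^{\frac{2}{n}}+o(F^0(x))\quad\text{as }|x|\to 0.
\end{equation*}
The rescaled starting time $t_0/\lambda\to 0$ and the rescaled contribution of $u(\cdot,t_0)$ vanish in the limit (here I would use the $L^\infty$-bound on $u_t$ from Proposition~\ref{boundedness of weak solution} to control $u(\cdot,t_0)$ by $C t_0$, whose rescaling is $O(\lambda^{-2/n}t_0)\to 0$). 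Therefore $\bar u(x,t)$ lies between two functions each asymptotic to $(C_*\pm\varepsilon)F^0(x)\,t$ near the origin. Dividing by $U_{C_*,L}(x,t)\sim C_*F^0(x)\,t$ and letting $|x|\to 0$ gives $1-\varepsilon/C_*\le\liminf\le\limsup\le 1+\varepsilon/C_*$, and sending $\varepsilon\to 0$ concludes the proof.

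The main obstacle is the barrier-matching in the first step: simultaneously enforcing the sub/supersolution conditions (which couple the constants through \eqref{subsolution coef condition 1}-\eqref{subsolution coef condition 2}) and the pointwise comparison $\underline\theta(\cdot,t_0)\le v(\cdot,t_0)\le\bar\theta(\cdot,t_0)$ while pinning the leading coefficient $C_1$ close to $C_*$. The flexibility comes from choosing $t_0$ large enough that the near-field asymptotic $P\sim C_*F$ dominates on the relevant annulus, and then scaling the auxiliary constants $C_2^{\pm}$ (and $c_2$ in the subsolution) to absorb the resulting error terms without violating the structural inequalities of Section~\ref{sec: sub and super sol}.
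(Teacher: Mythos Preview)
Your overall strategy is correct and matches the paper's approach: sandwich $v$ between barriers with leading coefficients $C_*\pm\varepsilon$, apply comparison from a large time $t_0$, rescale, integrate in time, and use Lemma~\ref{barrier integration in time lemma} to extract the leading asymptotic $(C_*\pm\varepsilon)F^0(x)\,t$. The subsolution side and the post-rescaling analysis are essentially as in the paper.

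There is, however, a genuine gap in the supersolution step. You propose $\bar\theta=[(C_*+\varepsilon)F(x)-C_2 t^{(2-n)/n}]_+$, but with the leading coefficient \emph{pinned} at $C_*+\varepsilon$ you cannot verify the initial comparison $\bar\theta(\cdot,t_0)\ge v(\cdot,t_0)$. The near-field limit $v\to P$ in Theorem~\ref{Near field limit Theorem} is uniform only on \emph{fixed} compact subsets of $\overline{K^\compl}$, whereas the support of $v(\cdot,t_0)$ fills an annulus of outer radius $\sim t_0^{1/n}$ that grows with $t_0$. Your claim that ``$v(x,t_0)\approx P(x)$ on an annular region'' tacitly extends the compact convergence to this growing set, which is not available; the a priori bound $v\le CF$ from Lemma~\ref{viscos free boundary bound} may have $C>C_*+\varepsilon$ and does not close the gap. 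Comparing on all of $\mathbb{R}^n\setminus K$, as you propose, creates a second problem: you would need $\bar\theta\ge 1$ on $\partial K$, and $(C_*+\varepsilon)F|_{\partial K}\ge 1$ is not guaranteed for small $\varepsilon$.

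The paper handles both issues by (i) comparing on the exterior region $\{|x|\ge a\}$ rather than on $\mathbb{R}^n\setminus K$, and (ii) using a \emph{different} supersolution,
\[
\theta_2(x,t)=(C_*+\varepsilon)F(x)+C_2\,\Phi(x,t),
\]
where $\Phi$ is the fundamental solution of the parabolic equation $u_t-\mathcal{L}u=0$. Since $\Phi>0$ everywhere, $\theta_2$ has no free boundary and $(\theta_2)_t-\mathcal{L}\theta_2=0$ trivially; on the lateral boundary $\{|x|=a\}$ one has $\theta_2>(C_*+\varepsilon)F>v$ for $t\ge t_0$ directly from the near-field limit; and the initial comparison $\theta_2(\cdot,t_0)\ge v(\cdot,t_0)$ on $\{|x|\ge a\}$ follows simply by taking $C_2$ large, because $v(\cdot,t_0)$ is bounded with compact support while $\Phi(\cdot,t_0)>0$. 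The Aronson bounds \eqref{parabolic fund.sol estimate} give $\Phi^\lambda(x,t):=\lambda^{(n-2)/n}\Phi(\lambda^{1/n}x,\lambda t)\to 0$ locally uniformly, so this additional term disappears in the rescaled limit and the upper bound on $\bar u$ is still $(C_*+\varepsilon)F^0(x)\,t$, exactly as you need.
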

\begin{proof}
	Let $C_*$ as in Lemma~\ref{C*} and $F$ be the fundamental solution of \eqref{elliptic eq} as in Section~\ref{sec: sub and super sol}. Fix $\varepsilon>0$. By Lemma~\ref{C*}, there exists $a$ large enough such that
	\begin{equation}
	\label{estimate in P}
	\begin{aligned}
	&\left| \frac{P(x)}{F(x)}-C_*\right| < \frac{\varepsilon}{2}, &&\mbox{ in } \{|x|\geq a\}
	\end{aligned}
	\end{equation}
  and $K \subset \{|x| < a\}$.
	 In particular, \eqref{estimate in P} holds for every $x, |x|=a$.

	The set $\{|x|=a\}$ is a compact subset of $\mathbb{R}^n \setminus K$. Then by Theorem \ref{Near field limit Theorem}, there exists $t_0 >0$ such that for all $t \geq t_0$,
	\begin{equation*}
	\begin{aligned}
	&\left|\frac{v(x,t)}{F(x)}-\frac{P(x)}{F(x)}\right|< \frac{\varepsilon}{2}, && \mbox{ for all } x, |x|=a.
	\end{aligned}
	\end{equation*}
	By triangle inequality we have for all $t\geq t_0$, for all $x$ such that $|x| =a$,
	$$\displaystyle \left|\frac{v(x,t)}{F(x)}-C_*\right| < \varepsilon.$$

	Let $\Phi(x,t)$ be the fundamental solution of the parabolic equation
	\begin{equation}
	\label{parabolic equation}
	u_t-\mathcal{L}u=0.
	\end{equation}
	As shown in \cite{FriedmanP, Aronson}, such unique fundamental solution exists and satisfies
	\begin{equation}
	\label{parabolic fund.sol estimate}
	N^{-1}t^{-\frac{n}{2}}e^{-\frac{N|x|^2}{t}} \leq \Phi(x,t) \leq Nt^{-\frac{n}{2}}e^{-\frac{|x|^2}{Nt}}
	\end{equation}
	for some $N>0$.
	We consider $\theta_1,\theta_2$ as follows:
	\begin{align*}
    \theta_1(x,t)&:=\left[(C_*-\varepsilon)F(x)+\frac{c_2h(x)}{t}-c_3t^{\frac{2-n}{n}}\right]_+ \chi_{E}(x,t),\\
	\theta_2(x,t)&:= (C_*+\varepsilon)F(x) + C_2 \Phi(x,t),
	\end{align*}
	where $E$, $h(x)$ were defined as in Section \ref{subsol}. We will show that we can choose the coefficients such that $\theta_1$ is a subsolution and $\theta_2$ is a supersolution of \eqref{Stefan} in $\{|x|\geq a\}\times \{t\geq t_0\}$ for some $t_0$. Since we fix the first coefficient of $\theta_1$ and $\theta_2$, we need to check the initial conditions carefully.

  Note that on the set $\{|x|=a\}$, $\theta_1 \to (C_*-\varepsilon)F(x)$ and $\theta_2 \to (C_*+\varepsilon)F(x)$ uniformly as $t \to \infty$. Thus we can choose a large time $t_0$ such that $\theta_1 \leq v \leq \theta_2$ on $\{|x=a|\}\times \{t \geq t_0\}$. By \eqref{radius of E(t)}, we can choose $c_2$ large enough such that  $\supp \theta_1(\cdot,t_0) \subset \{x: (x, t_0) \in E\} \subset B_a(0)$  and then $\theta_1(\cdot, t_0) \leq v(\cdot,t_0)$ in $\{|x|\geq a\}$. Following Section \ref{subsol}, by choosing larger $c_2, t_0$ if necessary  and $c_3$ satisfying \eqref{subsolution coef condition 1}, \eqref{subsolution coef condition 2}, $\theta_1$ is a subsolution of \eqref{Stefan} in $\{|x|\geq a\} \times \{t \geq t_0\}$.

  Fix the time $t_0$ such that $\theta_1$ is a subsolution of \eqref{Stefan} in $\{|x|\geq a\} \times \{t\geq t_0\}$ as above. By \eqref{bound for fund.sol n>2} and \eqref{parabolic fund.sol estimate}, $\theta_2 >0$ in $\Rn$. Moreover, since $F(x)$ and $\Phi(x,t)$ are the fundamental solutions of \eqref{elliptic eq} and \eqref{parabolic equation} respectively, clearly $(\theta_2)_t - \mathcal{L}\theta_2 =0$ in $\Rn \setminus \{0\}$. If we choose $C_2$ large enough then $\theta_2(\cdot, t_0)> v(\cdot, t_0)$ and $\theta_2$ is a super solution of \eqref{Stefan} in $\{|x|\geq a\}\times \{t \geq t_0\}$.

	 By comparison principle, $\theta_1 \leq v \leq \theta_2$ in $\{|x|\geq a\} \times \{t\geq t_0\}$. Moreover, since $h(x)>0$ then $$\theta_1(x,t) \geq \tilde{\theta}_1(x,t):= \left[(C_*-\varepsilon)F(x)-c_3t^{\frac{2-n}{n}}\right]_+.$$ Therefore $\tilde{\theta}_1^\lambda \leq v^\lambda \leq \theta_2^\lambda$ for $\lambda$ is large enough.

	Noting that $\Phi^\lambda(x,t):=\lambda^{\frac{n-2}{n}}\Phi(\lambda^{\frac{1}{n}}x,\lambda t) \to 0$ uniformly as $\lambda \to \infty$ by \eqref{parabolic fund.sol estimate}, then by Lemma~\ref{barrier convergence of rescaled }, $\tilde{\theta}_1^\lambda, \theta_2^\lambda$ converge locally uniformly to $\theta_1^0, \theta_2^0$ of the form
	\begin{align*}
	\theta_1^0(x,t)&:=\left[(C_*-\varepsilon)F^0(x)-c_3t^{\frac{2-n}{n}}\right]_+ ,\\
	\theta_2^0(x,t)&:= (C_*+\varepsilon)F^0(x),
	\end{align*}
	where $F^0$ is the fundamental solution of $ -\mathcal{L}^0 u =0$, $\mathcal{L}^0$ is the limit of the operators $\mathcal{L}^\lambda$ as in Lemma~\ref{Asymptotic expansion fund.sol}.
	Applying the same method as in \cite{P1} we have
	\begin{equation}
	\label{order}
	\int_{0}^{t}\theta^0_1(x,s)\ds \leq \overline{u}(x,t) \leq \int_{0}^{t}\theta^0_2(x,s)\ds.
	\end{equation}
By Lemma~\ref{barrier integration in time lemma} we obtain
\begin{equation*}
 (C_*-\varepsilon)F^0(x)t-\frac{c_3n}{2}t^{\frac{2}{n}}+o(F^0(x)) \leq \overline{u}(x,t) \leq  (C_*+\varepsilon)F^0(x)t
\end{equation*}
as $|x| \to 0$.
	Dividing both sides of by $F^0(x)$ and taking the limit as $|x| \rightarrow 0$ we get
	\begin{equation*}
	(C_*-\varepsilon)t \leq \liminf_{|x|\rightarrow 0}\frac{\overline{u}(x,t)}{F^0(x)}\leq \limsup_{|x|\rightarrow 0}\frac{\overline{u}(x,t)}{F^0(x)}\leq (C_*+\varepsilon)t.
	\end{equation*}
	Since $\varepsilon>0$ is arbitrary, we have the correct singularity by sending $\varepsilon$ to $0$.
\end{proof}

Finally, we check that the limit function $\bar{u}$ satisfies the inequality and equality in  \eqref{Limit problem for variational solution}.
\begin{lemma}
	\label{limit function satisfies limit problem}
		For each $0 \leq t \leq T$, $\overline{w}=\overline{u}(\cdot,t)$ satisfies
	\begin{align}
	\label{ineq1}
		q(\overline{w},\phi)& \geq \left <-L,\phi\right >, &&\forall \phi \in W_1,\\
		\label{ineq2}
		q(\overline{w},\psi \overline{w})&= \left< -L, \psi \overline{w}\right>	,&&\forall \psi \in W_2,
	\end{align}
 where $L=\left< \frac{1}{g} \right>$ and $W_1,W_2$ were defined as in Section~\ref{sec: limit variational problem}.
\end{lemma}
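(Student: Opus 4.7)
The plan is to test the rescaled obstacle problem \eqref{rescaled Variational problem} with admissible functions built from $\phi$ and $\psi$, then pass to the limit along the subsequence $\lambda_k\to\infty$ identified earlier. Three tools will drive the argument: the $\Gamma(L^2)$-convergence $J^\lambda\to J^0$ of Theorem~\ref{gamma convergence}, which is equivalent to the $G/H$-convergence of $\mathcal{L}^\lambda$ to $\mathcal{L}^0$ and yields weak $L^2_{\mathrm{loc}}$ convergence of the fluxes $a^\lambda_{ij}D_j u^\lambda\rightharpoonup q_{ij}D_j\bar{u}$; the averaging Lemma~\ref{media} applied to $-1/g^\lambda$ paired with strongly $L^2$-convergent sequences; and the uniform bound $|u^\lambda_t|\leq C$ from Proposition~\ref{boundedness of weak solution}, which together with $\lambda^{(2-n)/n}\to 0$ (here $n\geq 3$ is essential) kills the parabolic contribution in the rescaled equation.

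To prove \eqref{ineq1}, I fix $\phi\in W_1$, so $\phi=0$ on some $B_\varepsilon$. Since $K^\lambda\subset B_\varepsilon$ for $\lambda$ large, the function $\varphi=u^\lambda+\phi$ lies in $\mathcal{K}^\lambda(t)$. Substituting into \eqref{rescaled Variational problem} and integrating by parts (the compact support of $\phi$ together with the bounded support of $u^\lambda$ from Lemma~\ref{viscos free boundary bound} eliminates boundary contributions) yields
\begin{equation*}
\lambda^{\frac{2-n}{n}}\int u^\lambda_t\,\phi\dx + a^\lambda(u^\lambda,\phi) \;\geq\; \int f^\lambda\phi\dx.
\end{equation*}
The parabolic term tends to zero by the uniform bound. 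On $\supp\phi$, $|\lambda^{1/n}x|\to\infty$, so $v_0(\lambda^{1/n}x)=0$ eventually and $f^\lambda=-1/g^\lambda$ there; Lemma~\ref{media} then gives $\int f^\lambda\phi\to\langle -L,\phi\rangle$. Finally the flux convergence forces $a^\lambda(u^\lambda,\phi)\to q(\bar{u},\phi)$ by pairing against the fixed $D\phi\in L^2$. Combining these pieces proves \eqref{ineq1}.

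For \eqref{ineq2} I begin from the complementarity $u^\lambda\bigl(\lambda^{(2-n)/n}u^\lambda_t-\mathcal{L}^\lambda u^\lambda-f^\lambda\bigr)=0$ a.e. Multiplying by $\psi\in W_2$ and integrating by parts (valid since $\psi u^\lambda$ has compact support in $\Rn\setminus K^\lambda$ for large $\lambda$) gives
\begin{equation*}
\lambda^{\frac{2-n}{n}}\int\psi u^\lambda u^\lambda_t\dx + a^\lambda(u^\lambda,\psi u^\lambda) \;=\; \int f^\lambda\psi u^\lambda\dx.
\end{equation*}
The parabolic term vanishes, and Lemma~\ref{media} applied to the strongly $L^2$-convergent sequence $\psi u^\lambda\to\psi\bar{u}$ sends the right-hand side to $\langle -L,\psi\bar{u}\rangle$. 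Splitting $D_j(\psi u^\lambda)=(D_j\psi)u^\lambda+\psi D_j u^\lambda$, the first piece of $a^\lambda(u^\lambda,\psi u^\lambda)$ is a weak--strong pairing handled by the flux convergence; the quadratic piece $\int\psi\,a^\lambda_{ij}D_i u^\lambda D_j u^\lambda\dx$ I plan to treat by the div--curl (compensated compactness) lemma applied to the curl-free gradient $Du^\lambda$ and the flux $a^\lambda Du^\lambda$, whose divergence is $L^2$-bounded via the rescaled equation. This yields $a^\lambda_{ij}D_i u^\lambda D_j u^\lambda\rightharpoonup q_{ij}D_i\bar{u}\,D_j\bar{u}$ in $\mathcal{D}'$, and testing against $\psi$ identifies the limit of the bilinear form as $q(\bar{u},\psi\bar{u})$.

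The chief obstacle is that we work on the unbounded exterior domain with the shrinking Dirichlet set $K^\lambda$, so the $H$-convergence and div--curl steps must be localized. I would restrict to bounded annuli $B_R\setminus B_\varepsilon$ (Lemma~\ref{viscos free boundary bound} supplies the outer bound, the structure of $\phi$ and $\psi$ the inner one), and use cut-off functions together with the fundamental estimate of Definition~\ref{def:fundamental-estimate} to bound the cut-off error uniformly in $\lambda$. On such annuli the standard homogenization machinery delivers both the flux convergence and the quadratic-form convergence needed to close the argument.
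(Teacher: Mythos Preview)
Your approach is correct but takes a genuinely different route from the paper. The paper works entirely at the level of energies: for \eqref{ineq1} it builds, via Theorem~\ref{gamma convergence}, a recovery sequence $\varphi^k\to\bar w+\phi$ with $J^{\lambda_k}(\varphi^k)\to J^0(\bar w+\phi)$, then uses the fundamental estimate (Definition~\ref{def:fundamental-estimate}) to splice $\varphi^k$ with $w^k$ so that the modified competitor actually lies in $\mathcal K^{\lambda_k}(t)$; the variational inequality together with $a^{\lambda}(u,v-u)\le\tfrac12(J^\lambda(v)-J^\lambda(u))$ and the $\liminf$ inequality of $\Gamma$-convergence then give \eqref{gamma convergence inequality}, which is linearized by replacing $\phi$ by $\delta\phi$ and sending $\delta\to0$. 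For \eqref{ineq2} the paper repeats this with $\bar\varphi=(1-\psi)\bar w$. In contrast, you test the rescaled inequality directly with $u^\lambda+\phi$ (which is admissible since $\phi\ge0$ and $\phi=0$ on $K^\lambda$ for large $\lambda$) and pass to the limit using the $H$-convergence flux statement $a^\lambda_{ij}D_ju^\lambda\rightharpoonup q_{ij}D_j\bar u$; for the equality you start from the a.e.\ complementarity and handle the quadratic term by the div--curl lemma.

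Two remarks. First, your invocation of flux convergence is legitimate but deserves one sentence of justification: it holds on each annulus because $-\mathcal L^\lambda u^\lambda$ is uniformly bounded in $L^2_{\mathrm{loc}}$ there (from the $W^{2,p}$ regularity of Proposition~\ref{boundedness of weak solution} and the complementarity relations), and $H$-convergence of periodic operators is local. Second, your final paragraph mentioning the fundamental estimate is unnecessary in your framework---that device belongs to the paper's recovery-sequence strategy, not to yours; the localization you need is already furnished by the compact supports of $\phi$ and $\psi$ (after the same density reduction the paper makes). What your route buys is directness and no need to construct or modify recovery sequences; what the paper's route buys is that it stays purely at the level of $\Gamma$-convergence of energies, which is the formulation the authors emphasize extends verbatim to the stationary ergodic random setting.
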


\begin{proof}
	Fix $t \in [0,T]$ and take any $\phi \in W_1$. By continuity, we can choose $\phi$ with a compact support contained in $\Omega:= B(0,R) \setminus \overline{B(0,\varepsilon_0)}$ for some $0 < \varepsilon_0 < R$. Let $w^k(x):=u^{\lambda_k}(x,t)$ and $\overline{\varphi}:=\overline{w}+\phi \in H^1(\Rn)$. By Theorem \ref{gamma convergence}, there exists a sequence $\{\varphi^k\}$ that converges strongly in $L^2(\Omega)$ to $\overline{\varphi}$ such that
	\begin{equation}
	\label{convergence sequence}
	J^{\lambda_k}(\varphi^k, \Omega) \rightarrow J^0(\overline{\varphi},\Omega).
	\end{equation}
	We will show that we can modify $\varphi^k$ into $\tilde{\varphi}^k$ such that $\tilde{\varphi}^k \in \mathcal{K}^{\lambda_k}(t)$ and all the convergences are preserved.

	First, we see that $J^0(\bar{\varphi}, \Omega) < \infty$ since $\bar{\varphi} \in H^1(\Omega)$. By \eqref{convergence sequence}, $J^{\lambda_k}(\varphi^k, \Omega) < \infty$  and hence  $\varphi^k \in H^1(\Omega)$ when $k$ is large enough.

Next, we need to modify $\varphi^k$ so that the boundary condition on $K^{\lambda_k}$ is satisfied.
 Since $\overline{\varphi} \in H^1(\Omega)$, for every $\varepsilon>0$, there exists a compact set $A(\varepsilon) \subset \Omega$ such that $\supp \phi \subset A(\varepsilon)$ and
 \begin{equation}
 	\label{negliability}
 	\int_{\Omega \setminus A(\varepsilon)}|D\overline{\varphi}|^2\dx <\varepsilon.
 \end{equation}
 Let $A'(\varepsilon),A''(\varepsilon)$ such that $A(\varepsilon) \subset A'(\varepsilon) \Subset A''(\varepsilon) \Subset \Omega$ and $B(\varepsilon)=\Omega \setminus A(\varepsilon)$. By \cite[Theorem 19.1]{D}, the fundamental estimate \eqref{cut-off.est} holds uniformly in the class of all functionals of the form \eqref{functionals in Gamma convergence}. Thus there exists a constant $M\geq0$ independent of $\lambda_k$ and a sequence of cut-off functions $\xi^k_\varepsilon \in C^\infty_0(A''(\varepsilon)), 0 \leq \xi^k_\varepsilon \leq 1, \xi^k_\varepsilon = 1$ in a neighborhood of $\overline{A'(\varepsilon)}$ such that
 \begin{equation}
 	\label{phi.fund.est}
 	\begin{aligned}
 		J^{\lambda_k}(\xi^k_\varepsilon \varphi^k+(1-\xi^k_\varepsilon)(w^k+\phi),\Omega)\leq &(1+\varepsilon)(J^{\lambda_k}(\varphi^k,A''(\varepsilon))+J^{\lambda_k}(w^k+\phi, B(\varepsilon)))\\
 		&+\varepsilon(\|\varphi^k\|_{L^2(\Omega)}^2 + \|w^k+\phi\|_{L^2(\Omega)}^2+1)\\
 		&+M\|\varphi^k -w^k-\phi\|_{L^2(\Omega)}^2.
 	\end{aligned}
 \end{equation}
 Define
 \begin{equation*}
 \varphi_\varepsilon^k(x):=\left\{
 \begin{aligned}
&\xi^k_\varepsilon(x)\varphi^k(x)+(1-\xi^k_\varepsilon(x))(w^k(x)+\phi(x)) && \mbox{ if } x \in \Omega,\\
& w^k(x) && \mbox{ if } x \notin \Omega.
 \end{aligned}
 \right.
 \end{equation*}
Then $\varphi_\varepsilon^k \in H^1(\Rn), \|\varphi_\varepsilon^k-\bar{\varphi}\|_{L^2(\Omega)} \leq \|\varphi^k -\bar{\varphi}\|_{L^2(\Omega)} +\|w^k+\phi - \bar{\varphi}\|_{L^2(\Omega)} \to 0$ as $k \to \infty$ and $\varphi^k_\varepsilon-w^k$ has compact support in $\Omega$.

By ellipticity \eqref{ellipticity} we have
\begin{equation}
\label{gamma convergence ellipticity}
J^{\lambda_k}(w^k +\phi,B(\varepsilon)) \leq \beta\int_{B(\varepsilon)}|D(w^k+\phi)|^2\dx.
\end{equation}
In view of \eqref{negliability}, choose the sequence $\varepsilon_n:=\frac{1}{n}$ and denote $\varphi^k_n:=\varphi^k_{\varepsilon_n}$.
	By \eqref{phi.fund.est}, \eqref{gamma convergence ellipticity}, and the convergences $\varphi^k_n \to \overline{\varphi}$ in $L^2(\Omega)$ and $w^k \to \overline{w}$ in $H^1(\Omega)$ as $k \to \infty$, for each $n$ there exists $k_0(n)$ such that
	\begin{equation}
	\label{gamma convergence limsup}
	\left\{
	\begin{aligned}
	\|\varphi^k_n -\overline{\varphi}\|_{L^2(\Omega)} &\leq \min \left\{\frac{1}{n}, \frac{1}{Mn}\right\},\\
	J^{\lambda_k}(\varphi_n^k, \Omega) &\leq \left(1+\frac{1}{n}\right)\left(J^0(\overline{\varphi}, \Omega)+ \frac{\beta+1}{n}\right)+\frac{1}{n}\left(2\|\overline{\varphi}\|_{L^2(\Omega)}+\frac{1}{n}+1\right) + \frac{2}{n},
	\end{aligned}
	\right.
	\end{equation}
	for every $k \geq k_0(n)$. We can choose $k_0(n)$ such that $k_0$ is an increasing function of $n$ and $k_0(n) \to \infty$ as $n\to \infty$. We will form a new sequence $\{\hat{\varphi}^k\}$ from the class of sequences $\{\varphi^k_n\}$. The idea is that for each $k$, we will choose an appropriate $n(k)$ and set $\hat{\varphi}^k:=\varphi^k_{n(k)}$. We need to choose a suitable $n(k)$ such that $n(k) \to \infty$ and \eqref{gamma convergence limsup} holds for $\varphi^k_{n(k)}$ when $k$ is large enough. To this end we introduce an ``inverse" of $k$ as
	\begin{equation*}
	n(k):= \min \{j\in \mathbb{N}: k < k_0(j+1) \}.
	\end{equation*}
	$n(k)$ is well-defined, non-decreasing and tends to $\infty$ as $k\to \infty$.
	From the definition of $n(k)$ we see that if $k \geq k_0(2)$ then $n(k)\geq 2$ and $k_0(n(k)) \leq k < k_0(n(k)+1)$ (otherwise $n(k)$ is not the minimum). Thus by \eqref{gamma convergence limsup} and definition of $\hat{\varphi}^k$ we have for all $k \geq k_0(2)$,
	\begin{equation*}
	\left\{
	\begin{aligned}
	\|\hat{\varphi}^k -\overline{\varphi}\|_{L^2(\Omega)} &\leq \min \left\{\frac{1}{n(k)}, \frac{1}{Mn(k)}\right\},\\
	J^{\lambda_k}(\hat{\varphi}^k, \Omega)&=J^{\lambda_k}(\varphi^k_{n(k)}, \Omega) \\
	&\leq \left(1+\frac{1}{n(k)}\right)\left(J^0(\overline{\varphi}, \Omega)+ \frac{\beta+1}{n(k)}\right)\\
	&\quad+\frac{1}{n(k)}\left(2\|\overline{\varphi}\|_{L^2(\Omega)}+\frac{1}{n(k)}+1\right) + \frac{2}{n(k)}.
	\end{aligned}
	\right.
	\end{equation*}

	Sending $k \to \infty$ we get
	\begin{equation*}
	\left\{
	\begin{aligned}
	\lim_{k \to \infty}\|\hat{\varphi}^k - \bar{\varphi}\|_{L^2(\Omega)} &=0,\\
	\limsup_{k \to \infty} J^{\lambda_k}(\hat{\varphi}^k, \Omega) &\leq J^0(\bar{\varphi},\Omega).
	\end{aligned}
	\right.
	\end{equation*}
On the other hand, by Theorem \ref{gamma convergence},
 \begin{equation*}
 J^0(\bar{\varphi},\Omega) \leq \liminf_{k \to \infty}J^{\lambda_k}(\hat{\varphi}^k,\Omega)
 \end{equation*}
 and thus we can conclude that $\hat{\varphi}^k \to \bar{\varphi}$ strongly in $L^2(\Omega)$ and  $J^{\lambda_k}(\hat{\varphi}^k,\Omega) \to J^0(\bar{\varphi},\Omega)$. Moreover, by the definitions of $\varphi^k_\varepsilon, \hat{\varphi}^k$, we also have $\hat{\varphi}^k \in H^1(\Omega)$ and $\hat{\varphi}^k -w^k$ has compact support in $\Omega$.

 Now set
 $\tilde{\varphi}^k:=|\hat{\varphi}^k|$.
Then $\tilde{\varphi}^k \in H^1(\Omega), \tilde{\varphi}^k \geq 0, \tilde{\varphi}^k = w^k$ in $\Omega^\compl \supset K^{\lambda_k}$ for $k$ large enough, and thus $\tilde{\varphi}^k \in \mathcal{K}^{\lambda_k}(t)$ for $k$ large enough. Moreover, following the argument in the proof of \cite[Lemma~4.5]{K3},
$\tilde{\varphi}^k \to \bar{\varphi}$ in $L^2(\Omega)$ and $J^{\lambda_k}(\tilde{\varphi}^k,\Omega) \to J^0(\bar{\varphi},\Omega)$.

	Since $w^k, \tilde{\varphi}^k \in \mathcal{K}^{\lambda_k}(t)$ and $\supp(\tilde{\varphi}^k-w^k) \subset \Omega $, by \eqref{rescaled Variational problem} and integration by parts formula we have
	\begin{equation*}
	a^{\lambda_k}_{\Omega}(w^k, \tilde{\varphi}^k -w^k) \geq -\lambda_k^{\frac{2-n}{n}}\left<u^{\lambda_k}_t, \tilde{\varphi}^k -w^k\right>_{\Omega} + \left<-\frac{1}{g^{\lambda_k}},\tilde{\varphi}^k -w^k\right>_{\Omega}.
	\end{equation*}
The inequality $a^{\lambda_k}(u,v-u) \leq \frac{1}{2}J^{\lambda_k}(v)- \frac{1}{2}J^{\lambda_k}(u)$ for any $u,v$ implies
	\begin{equation*}
	\frac{1}{2} J^{\lambda_k}(\tilde{\varphi}^k, \Omega) \geq \frac{1}{2} J^{\lambda_k}(w^k, \Omega)- \lambda_k^{\frac{2-n}{n}}\left<u^{\lambda_k}_t, \phi^k\right>_{\Omega} + \left<-\frac{1}{g^{\lambda_k}},\phi^k\right>_{\Omega},
	\end{equation*}
	where $\phi^k:= \tilde{\varphi}^k - w^k \to \phi$ in $L^2(\Omega)$. Taking $\liminf$ as $k \rightarrow \infty$ and using the fact that $u^{\lambda_k}_t$ is bounded give
	\begin{equation}
	\label{gamma convergence inequality}
	\frac{1}{2} J^0(\overline{\varphi}, \Omega) \geq \frac{1}{2} J^0(\overline{w}, \Omega) + \left<-L,\phi\right>_{\Omega}.
	\end{equation}
	This holds for any $\phi \in W_1$ and therefore also for $\delta\phi$, where $0 <\delta <1$.
Replacing $\phi$ in \eqref{gamma convergence inequality} by $\delta\phi$ we have
\begin{align*}
\frac{1}{2}J^0(\bar{w}+\delta\phi,\Omega) &\geq \frac{1}{2}J^0(\bar{w},\Omega)+\left<-L, \delta\phi\right>\\
\Leftrightarrow  \frac{1}{2}\left[J^0(\bar{w},\Omega)+2\delta q_\Omega(\bar{w},\phi)+\delta^2J^0(\phi)\right] &\geq \frac{1}{2}J^0(\bar{w},\Omega)+\left<-L, \delta\phi\right>.
\end{align*}
Dividing both sides by $\delta$ and sending $\delta \to 0$ we obtain
	\begin{equation*}
	q_\Omega(\overline{w},\phi) \geq \left<-L, \phi\right>_\Omega.
	\end{equation*}
Since $\supp \phi \in \Omega$, we conclude that \eqref{ineq1} holds in $\Rn. $

	Now take $\psi \in W_2$. As above, we assume that $\psi$ has a compact support contained in  $\Omega$, and  without loss of generality we can also assume that $0 \leq \psi \leq 1, \psi =0$ on $B_\varepsilon(0)$ (otherwise consider $\frac{\psi}{\max_{\Rn}\psi}$ instead). Since $\psi \in W_2$ then $\psi \overline{w} \in W_1$ and \eqref{ineq1} holds for $\psi\bar{w}$, we have $q(\overline{w}, \psi \overline{w}) \geq \left<-L, \psi \overline{w}\right>$. For the reverse inequality, define $\overline{\varphi}:= (1-\psi)\overline{w} \in H^1(\Omega)$. Arguing as before, we can choose $\tilde{\varphi}^k \in \mathcal{K}^{\lambda_k}(t)$ such that $\tilde{\varphi}^k \to \overline{\varphi} \mbox{ in } L^2(\Omega), J^{\lambda_k}(\tilde{\varphi}^k, \Omega) \to J^0(\overline{\varphi}, \Omega)$. Again, since $w^k, \tilde{\varphi}^k \in \mathcal{K}^{\lambda_k}(t)$, by \eqref{rescaled Variational problem} and the inequality $a^{\lambda_k}(u,v-u) \leq \frac{1}{2}J^{\lambda_k}(v)- \frac{1}{2}J^{\lambda_k}(u)$ for any $u,v$ we have
	\begin{equation*}
    \frac{1}{2} J^{\lambda_k}(\tilde{\varphi}^k, \Omega) \geq \frac{1}{2} J^{\lambda_k}(w^k, \Omega)- \lambda_k^{\frac{2-n}{n}}\left<u^{\lambda_k}_t, \tilde{\varphi}^k-w^k\right>_{\Omega} + \left<-\frac{1}{g^{\lambda_k}},\tilde{\varphi}^k -w^k\right>_{\Omega}.
	\end{equation*}
	Taking $\liminf$ as $k \to \infty$ and arguing the same as in the proof of \eqref{ineq1} we get
	\begin{equation*}
	\begin{aligned}
	&&q_{\Omega}(\overline{w}, \overline{\varphi}- \overline{w}) &\geq \left<-L, \overline{\varphi}- \overline{w}\right>_\Omega\\
	&\Leftrightarrow& -q_\Omega(\overline{w}, \psi \overline{w}) &\geq -\left<-L, \psi \overline{w} \right>_\Omega\\
	&\Leftrightarrow& q_\Omega(\overline{w}, \psi \overline{w}) &\leq \left<-L, \psi \overline{w} \right>_\Omega.
	\end{aligned}
	\end{equation*}
	Thus $ q(\overline{w}, \psi \overline{w}) = \left<-L, \psi \overline{w} \right>$ for every $\psi \in W_2$.
	\end{proof}
	This completes the proof of Theorem \ref{convergence of variational solutions}.
	\end{proof}
	\section{Uniform convergence of rescaled viscosity solutions and free boundaries}
   In this final section, we establish the convergence of the rescaled viscosity solutions $v^\lambda$ of the Stefan problem (\ref{Stefan})  and their free boundaries. The proof is based on viscosity arguments showing that the half-relaxed limits of $v^\lambda$ in $\{|x| \neq 0, t\geq 0\}$ defined as
 \begin{align*}
   & v^*(x,t)= \limsup_{(y,s),\lambda \rightarrow (x,t), \infty} v^\lambda(y,s), & v_*(x,t)= \liminf_{(y,s),\lambda \rightarrow (x,t), \infty} v^\lambda(y,s)
 \end{align*}
 coincide and are the viscosity solution of the limit problem with a point source.
 We have the following result, which is similar to \cite[Theorem 4.2]{PV}.
 \begin{theorem}
 	\label{convergence of rescaled viscosity solution}
 	Let $n\geq 3$ and $V=V_{C_*,L}$ be the solution of Hele-Shaw problem with a point source \eqref{Hele_shaw point source problem} with the constant $C_*$ from Lemma~\ref{C*} and $L=\left<\frac{1}{g}\right>$ as in Lemma~\ref{media}. The rescaled viscosity solution $v^\lambda$ of the Stefan problem (\ref{Stefan}) converges
 	locally uniformly to $V = V_{C_*, \ang{\frac{1}{g}}}$ in $(\mathbb{R}^n \setminus \{0\}) \times [0,\infty)$ as $\lambda \rightarrow \infty$ and
 	$$v_*=v^*=V.$$
 	Moreover, the rescaled free boundary $\{\Gamma(v^\lambda)\}_{\lambda}$ converges to $\Gamma(V)$ locally uniformly with respect to the Hausdorff distance.
 \end{theorem}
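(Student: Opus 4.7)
The plan is to follow the blueprint of \cite[Theorem 4.2]{PV}. Introduce the half-relaxed limits $v^*$ and $v_*$ on $(\Rn \setminus \{0\}) \times [0,\infty)$ and show that $v^*$ is a viscosity subsolution and $v_*$ is a viscosity supersolution of the limit Hele-Shaw problem \eqref{Hele_shaw point source problem}. Since $V$ is the unique classical solution of that problem, the comparison principle for \eqref{Hele_shaw point source problem} then forces $v^* \leq V \leq v_*$, and combined with the trivial $v_* \leq v^*$ this collapses to $v^* = v_* = V$, which is equivalent to the claimed locally uniform convergence. Theorem~\ref{convergence of variational solutions} together with the Kim--Mellet coincidence of weak and viscosity solutions supplies the integrated convergence $u^\lambda \to U_{C_*, \langle 1/g \rangle}$, which is a key ingredient but not by itself equivalent to the pointwise statement for $v^\lambda$.

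First I would verify the singular behavior $v^*, v_* \sim C_* F^0(x)$ near the origin, using the barriers $\theta_1, \theta_2$ of Section~\ref{sec: sub and super sol}, their rescaled limits from Lemma~\ref{barrier convergence of rescaled }, the near-field limit in Theorem~\ref{Near field limit Theorem}, and the asymptotic constant $C_*$ from Lemma~\ref{C*}; this is the $v^\lambda$-level analogue of Lemma~\ref{singularity of U}. Next, on compact subsets of $\Omega(V)$ the rescaled equation \eqref{rescaled equation} has the coefficient $\lambda^{\frac{2-n}{n}}$ in front of $v^\lambda_t$ tending to zero while $\mathcal{L}^\lambda$ $G$-converges to $\mathcal{L}^0$; Harnack's inequality with constants independent of $\lambda$ (possible precisely because the ellipticity bounds for $\mathcal{L}^\lambda$ do not depend on $\lambda$) and standard viscosity stability then yield $-\mathcal{L}^0 v^* \leq 0$ and $-\mathcal{L}^0 v_* \geq 0$ in $\Omega(V)$. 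The free boundary condition is extracted at regular contact points with smooth test functions that strictly satisfy the homogenized velocity law $v_t = \tfrac{1}{\langle 1/g \rangle} q_{ij} D_i v D_j v$: the rescaled law $v^\lambda_t = g^\lambda a^\lambda_{ij} D_i v^\lambda D_j v^\lambda$ is pushed to the limit through the averaging Lemma~\ref{media} for $g$ and the $G$-limit of $a_{ij}$, while the weak monotonicity $v_t \geq 0$ inherited from the initial data \eqref{initial data} makes it possible to regularize the contact and rules out inward motion. Finally, the Hausdorff convergence $\Gamma(v^\lambda) \to \Gamma(V)$ is obtained from the locally uniform convergence of $v^\lambda$, the non-degeneracy provided by the explicit subsolution barrier of Section~\ref{subsol}, and the fact that $\Gamma(V)$ is a strictly expanding smooth self-similar surface (a transformed sphere) inherited from the classical Hele-Shaw solution.

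The hardest step is the free-boundary step, where the limit must be taken in the product $g^\lambda a^\lambda_{ij} D_i v^\lambda D_j v^\lambda$ in which two distinct homogenization mechanisms interact: the multiplicative averaging of $g$ and the $G$-convergence of the divergence-form operator. As stressed just before the theorem, the isotropic argument of \cite{PV} transfers essentially without modification precisely because the rescaled operator $\mathcal{L}^\lambda$ admits a Harnack inequality with $\lambda$-independent constants. The viscosity arguments of \cite{PV} are then used in combination with the weak/viscosity coincidence, so the genuinely new content is concentrated in the anisotropic refinements of the barriers (already carried out in Section~\ref{sec: sub and super sol}) and in the $G$-convergence of $\mathcal{L}^\lambda$ to $\mathcal{L}^0$ invoked in the stability step.
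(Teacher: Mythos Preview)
Your plan diverges from the paper at the most delicate point, and the divergence is a genuine gap rather than an alternative route.

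The paper never verifies that $v^*$ and $v_*$ are viscosity sub- and supersolutions of the \emph{full} Hele-Shaw problem \eqref{Hele_shaw point source problem}, and in particular never passes to the limit in the free boundary velocity law. What Lemma~\ref{relationship v^*,v_*,V} actually gives is only the \emph{elliptic} sub-/supersolution property (part~\ref{i}) together with two set-theoretic facts: $\Omega(V)\subset\Omega(v_*)$ (part~\ref{ii}, via weak monotonicity and the singularity) and $\Gamma(v^*)\subset\Gamma(V)$ (part~\ref{iii}, which is imported from \cite[Lemma~5.6]{K3} and rests on the variational convergence of Theorem~\ref{convergence of variational solutions} through Lemma~\ref{limit of sequence in 0-level set of ulambda k}). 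From (iii) one knows $v^*=0$ outside $\overline{\Omega(V)}$, and since $v^*$ is an $\mathcal{L}^0$-subharmonic function on $\Omega(V)$ with the singularity of Lemma~\ref{boundary condition for limit problem}, \emph{elliptic} comparison with $V_{C_*+\varepsilon,\langle 1/g\rangle}$ yields $v^*\le V_{C_*+\varepsilon}$. Combined with (ii) this gives $V\le v_*\le v^*\le V_{C_*+\varepsilon}$, and one sends $\varepsilon\to 0$. No Hele-Shaw comparison principle and no homogenized free boundary test are used.

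Your ``free-boundary step'' is exactly the step the paper avoids, and it cannot be carried out as you describe. At a viscosity contact point the relevant quantity is the pointwise product $g(\lambda^{1/n}x)\,a_{ij}(\lambda^{1/n}x)\,D_i\phi\,D_j\phi$ for a fixed smooth test function $\phi$; Lemma~\ref{media} averages $1/g$ against strongly convergent $L^2$ sequences, and $G$-convergence of $\mathcal{L}^\lambda$ concerns fluxes of solutions, not pointwise values of $a_{ij}^\lambda$ on a test function. Neither mechanism, separately or together, produces the limit $\tfrac{1}{\langle 1/g\rangle}\,q_{ij}\,D_i\phi\,D_j\phi$ at a single free boundary point. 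This is precisely why the homogenization of the velocity law is obtained indirectly, through the obstacle problem in Theorem~\ref{convergence of variational solutions}: that theorem is not merely ``a key ingredient'' but the device that replaces any direct verification of the free boundary condition. Rewrite your Step~1 so that the variational convergence feeds into Lemma~\ref{limit of sequence in 0-level set of ulambda k} and Lemma~\ref{relationship v^*,v_*,V}\ref{iii}, and close with elliptic comparison; the Harnack argument you sketch is then used only for the Hausdorff convergence of the free boundaries, as in the paper.
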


 All the viscosity arguments used in \cite[Section 4]{PV} can be applied in our anisotropic case with some minor adaptations. Therefore, we will omit some of the proofs and refer to \cite{K2,K3,P1,PV} for more details. Let us give a brief review of the ideas in the spirit of \cite[Section 4]{PV} as follows.
 \begin{enumerate}
 	\item We first prove the convergence of the rescaled viscosity solution and its free boundary under the condition \eqref{initial data}.
 	\begin{itemize}
 		\item By the regularity of the initial data $v_0$ as in \eqref{initial data}, we deduce a weak monotonicity of the solution $v$.
 		\item Using the weak monotonicity and pointwise comparison principle arguments, we then show the convergence for regular initial data.
 	\end{itemize}
\item For general initial data, we will find regular upper and lower approximations of the initial data satisfying \eqref{initial data} and use the comparison principle together with the  uniqueness of the limit solution to reach the conclusion.
 \end{enumerate}

 We will state the necessary results here with remarks on the adaptations for the anisotropic case.
 \subsection{Some necessary technical results}
 First, we have the correct singularity of $v^*$ and $v_*$ at the origin, which can be established similarly to the proof of Lemma~\ref{singularity of U}.
 \begin{lemma}[cf. {\cite[Lemma~4.3]{PV}}, $v^*$ and $v_*$ behave as $V$ at the origin]
 	\label{boundary condition for limit problem}
 	The functions $ v^*, v_*$ have a singularity at $0$ with
 	\begin{align}
 	\label{singularity of V}
 	&\lim_{|x| \rightarrow 0+}\frac{v_*(x,t)}{V(x,t)}=1, & \lim_{|x| \rightarrow 0+}\frac{v^*(x,t)}{V(x,t)}=1, \quad \mbox{ for } t>0.
 	\end{align}
 \end{lemma}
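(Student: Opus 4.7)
The plan is to mirror the proof of the analogous singularity statement for the integrated variational solution $\bar u$ (Lemma~\ref{singularity of U}), but carried out directly at the viscosity-solution level rather than for its time integral. The idea is to trap $v$ between explicit sub- and super-barriers on an annular region whose leading coefficients differ from $C_*$ by an arbitrarily small $\varepsilon$, pass to the rescaled limit, and then read off the behavior at $x=0$ by comparison with the known singularity of $V$.

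First I would fix $\varepsilon > 0$. Combining Lemma~\ref{C*} with Theorem~\ref{Near field limit Theorem}, one picks $a$ large so that $|P(x)/F(x) - C_*| < \varepsilon/2$ on $\{|x| \geq a\}$ and $K \subset \{|x| < a\}$, and then a time $t_0$ so large that $|v(x,t)/F(x) - C_*| < \varepsilon$ on $\{|x| = a\} \times \{t \geq t_0\}$. Following Section~\ref{sec: sub and super sol}, I would build a subsolution
\[
\theta_1(x,t) = \left[(C_* - \varepsilon) F(x) + \frac{c_2 h(x)}{t} - c_3 t^{(2-n)/n}\right]_+ \chi_E(x,t),
\]
and a supersolution
\[
\theta_2(x,t) = (C_* + \varepsilon) F(x) + C_2 \Phi(x,t),
\]
with the parameters chosen (and $t_0$ enlarged if needed) so that $\theta_1 \leq v \leq \theta_2$ holds on the parabolic boundary of $\{|x| \geq a\} \times \{t \geq t_0\}$. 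Applying the comparison principle for \eqref{Stefan} extends the inequality to the interior of the cylinder, giving in particular the weaker sandwich $\tilde\theta_1 \leq v \leq \theta_2$ with $\tilde\theta_1 = [(C_* - \varepsilon) F - c_3 t^{(2-n)/n}]_+$, which survives rescaling.

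Rescaling, I obtain $\tilde\theta_1^\lambda \leq v^\lambda \leq \theta_2^\lambda$ on $\{|x| \geq a \lambda^{-1/n}\} \times \{t \geq t_0/\lambda\}$. These shrinking cylinders eventually cover every compact subset of $(\Rn \setminus \{0\}) \times (0, \infty)$, so Lemma~\ref{barrier convergence of rescaled } together with the observation that $\Phi^\lambda \to 0$ locally uniformly (from the Gaussian bounds \eqref{parabolic fund.sol estimate}) yields
\[
\bigl[(C_* - \varepsilon) F^0(x) - c_3 t^{(2-n)/n}\bigr]_+ \leq v_*(x,t) \leq v^*(x,t) \leq (C_* + \varepsilon) F^0(x)
\]
on $(\Rn \setminus \{0\}) \times (0, \infty)$. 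Since $V(x,t)/F^0(x) \to C_*$ as $|x| \to 0$ for each fixed $t > 0$ (the singular boundary condition in \eqref{Hele_shaw point source problem}), dividing the sandwich by $V$ and letting $|x| \to 0$ gives
\[
\frac{C_* - \varepsilon}{C_*} \leq \liminf_{|x|\to 0} \frac{v_*(x,t)}{V(x,t)} \leq \limsup_{|x|\to 0} \frac{v^*(x,t)}{V(x,t)} \leq \frac{C_* + \varepsilon}{C_*},
\]
and sending $\varepsilon \to 0$ finishes the proof.

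The main delicate point is arranging the initial and lateral orderings required to invoke the Stefan comparison principle: on $\{|x| = a\}$ the lateral ordering is provided by the near-field limit and the choice of $a, t_0$; on $\{|x| \geq a\} \times \{t = t_0\}$ the subsolution is made harmless by enlarging $c_2$ (or $t_0$) so that $\supp \theta_1(\cdot, t_0) \subset \{|x| < a\}$, while the supersolution dominates $v(\cdot,t_0)$ thanks to the positive heat-kernel tail $C_2 \Phi(\cdot, t_0)$ with $C_2$ chosen large. This is the same barrier bookkeeping already carried out in the proof of Lemma~\ref{singularity of U}; the genuinely new observation is that at the level of $v^\lambda$ one can skip the time-integration step and read the singularity off the rescaled barriers directly, because the bounded time-dependent correction $c_3 t^{(2-n)/n}$ is swamped by the diverging factor $F^0(x)$ at the origin.
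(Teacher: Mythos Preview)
Your proposal is correct and takes essentially the same approach as the paper: the paper's proof is the single line ``Argue as in the proof of Lemma~\ref{singularity of U},'' and what you have written is precisely that argument carried out at the level of $v^\lambda$ rather than its time integral, with the half-relaxed limits $v_*,v^*$ replacing $\bar u$ and the division by $V$ (equivalently $F^0$, since $V/F^0\to C_*$) replacing the division by $F^0$.
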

 \begin{proof}
 	Argue as in the proof of Lemma~\ref{singularity of U}.
 \end{proof}
We will also make use of an uniform estimate on $u^\lambda$ and the convergence of boundary points deduced from the convergence of variational solutions.
 \begin{lemma}[cf. {\cite[Lemma~3.1]{K2}}]
 	\label{positive sup rescaling}
 	There exists constant $C > 0$ independent of $\lambda$ such that for every $x_0 \in \overline{\Omega_{t_0}(u^\lambda)}$ and $B_r(x_0) \cap \Omega_0^{\lambda} = \emptyset$ for some $r$, we have
 	\begin{equation*}
 	\sup_{x \in \overline{B_r(x_0)}}u^\lambda(x,t_0)>C r^2
 	\end{equation*}
 	 for every $\lambda$.
 \end{lemma}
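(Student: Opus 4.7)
The plan is an elliptic maximum-principle argument at the fixed time slice $t = t_0$. In the positive set of $u^\lambda(\cdot,t_0)$ the rescaled obstacle problem \eqref{rescaled Variational problem} holds with equality: $\lambda^{(2-n)/n} u^\lambda_t - \mathcal{L}^\lambda u^\lambda = f^\lambda$. The hypothesis $B_r(x_0) \cap \Omega_0^\lambda = \emptyset$ forces $f^\lambda = -1/g^\lambda \le -1/M$ on $B_r(x_0)$ by \eqref{condition in media}, and combined with the monotonicity $u^\lambda_t \ge 0$ from Proposition~\ref{boundedness of weak solution} this yields
\[
\mathcal{L}^\lambda u^\lambda(\cdot,t_0) \;\ge\; \frac{1}{M}
\qquad \text{weakly in } B_r(x_0)\cap \Omega_{t_0}(u^\lambda).
\]

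The crucial step is to produce a barrier $w$ with $\mathcal{L}^\lambda w = 1/M$, $w(x_0)=0$, $w\ge 0$, and a uniform quadratic lower bound $w(x)\ge c|x-x_0|^2$ with $c>0$ independent of $\lambda$, $x_0$, and $r$. A naive choice $|x-x_0|^2/(2\alpha nM)$ fails because the lower-order term $(D_j a^\lambda_{ij})D_i w$ carries a spurious factor $\lambda^{1/n}$. Instead, I would apply the construction of \cite[Appendix A]{K3} to the \emph{shifted} $\mathbb{Z}^n$-periodic coefficients $a_{ij}(\cdot+\lambda^{1/n}x_0)$, producing $h_{\lambda^{1/n}x_0}$ satisfying $\mathcal{L}_{\lambda^{1/n}x_0}h_{\lambda^{1/n}x_0}=n$ and $c|y|^2 \le h_{\lambda^{1/n}x_0}(y) \le \tilde c|y|^2$; the constants $c,\tilde c$ depend only on $\alpha,\beta$ in \eqref{ellipticity} and on the $L^\infty$-norm of the periodic correctors, which is invariant under shifts of the period lattice. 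The rescaled function
\[
  w(x) \;:=\; \frac{1}{nM}\,\lambda^{-2/n}\,h_{\lambda^{1/n}x_0}\bigl(\lambda^{1/n}(x-x_0)\bigr)
\]
then satisfies $\mathcal{L}^\lambda w = 1/M$ by a direct computation analogous to Lemma~\ref{rescaled fund.sol}, together with $\tfrac{c}{nM}|x-x_0|^2 \le w(x) \le \tfrac{\tilde c}{nM}|x-x_0|^2$, uniformly in $\lambda$ and $x_0$.

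With this barrier in hand, $\phi := u^\lambda - w$ is a weak subsolution $\mathcal{L}^\lambda \phi \ge 0$ on $\Omega := B_r(x_0)\cap \Omega_{t_0}(u^\lambda)$, and the weak maximum principle places $\sup_{\overline{\Omega}}\phi$ on $\partial\Omega$. On the free-boundary portion $\Gamma_{t_0}(u^\lambda)\cap\overline{B_r(x_0)}$ one has $u^\lambda = 0$, hence $\phi = -w \le 0$; meanwhile $\phi(x_0) = u^\lambda(x_0) \ge 0$ (taking a limit along $\Omega_{t_0}(u^\lambda)$ if $x_0$ lies on the free boundary). The nonnegative supremum must therefore be attained at some $\bar x \in \partial B_r(x_0)\cap\overline{\Omega_{t_0}(u^\lambda)}$, and at that point $u^\lambda(\bar x,t_0) \ge w(\bar x) \ge \tfrac{c}{nM}r^2$, so the conclusion follows with any $C < c/(nM)$. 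The main obstacle, and the only place where the $\mathbb{Z}^n$-periodicity \eqref{condition in media} enters this proof, is the barrier construction in the second step; all remaining ingredients are standard maximum-principle machinery for divergence-form subsolutions on a general open set.
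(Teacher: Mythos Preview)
Your proposal is correct and follows essentially the same approach as the paper: derive $\mathcal{L}^\lambda u^\lambda(\cdot,t_0) \geq 1/M$ in the positivity set from $u^\lambda_t \geq 0$ and $f^\lambda \leq -1/M$, subtract a quadratic barrier $w$ with $\mathcal{L}^\lambda w = \text{const}$ built from the function $h$ of \cite[Appendix~A]{K3}, and apply the maximum principle on the positivity set inside $B_r(x_0)$. You are more explicit than the paper about adapting the barrier to the shifted and rescaled coefficients (the paper simply writes $h^\lambda(x-x_0)$ and asserts that the quadratic growth constants depend only on the ellipticity constants), but the substance is identical.
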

 \begin{proof}
 	We will prove the statement for $x_0 \in \Omega_{t_0}(u^\lambda)$ first, the results then follows by continuity of $u^\lambda$. Since $B_r(x_0) \cap \Omega_0^{\lambda}= \emptyset$ then $u^\lambda$ satisfies
 	\begin{equation*}
 	\lambda^{\frac{2-n}{n}}u^\lambda_t-\mathcal{L}^\lambda u^\lambda= -\frac{1}{g^\lambda} \mbox{ in } \{u^\lambda>0\}\cap (B_r(x_0) \times \{t=t_0\}).
 	\end{equation*}
 	Since $u^\lambda_t \geq 0$ and $-\frac{1}{g}\leq -\frac{1}{M}$ then  $-\mathcal{L}^\lambda u^\lambda \leq -\frac{1}{M}=:-C_0$  in  $\{u^\lambda>0\} \cap (B_r(x_0) \times\{t=t_0\})$.

 	Define
 	\begin{equation*}
 	w^\lambda(x)=u^\lambda(x,t_0)-\frac{C_0}{n}h^\lambda(x-x_0)
 	\end{equation*}
 	where $h^\lambda(x)$ is the barrier with quadratic growth corresponding to elliptic operator $\mathcal{L}^\lambda$ introduced in Section \ref{subsol}. We have $\{w^\lambda>0\} \cap B_r(x_0) \subset \{u^{\lambda}>0\} \cap \{t=t_0\}$ and therefore, for all $\lambda$, $$-\mathcal{L}^\lambda w^\lambda \leq 0 \mbox{ in } \{w^\lambda>0\} \cap B_r(x_0).$$

 	We see that $w^\lambda(x_0)>0$. Hence the maximum of $w^\lambda$ in $\overline{B_r(x_0)}$ is positive and by the maximum principle, $w^\lambda$ attains the maximum on the boundary $\{w^\lambda>0\} \cap \partial B_r(x_0)$ and therefore
 	\begin{equation*}
 	\sup_{\overline{B_r(x_0)}} u^\lambda(x,t_0) \geq \sup_{|x-x_0|=r} u^\lambda(x,t_0) > \inf_{|x-x_0|=r}\frac{C_0}{n} \, h^\lambda(x-x_0).
 	\end{equation*}
 	By the quadratic growth of $h^\lambda$, where the coefficients on the growth rate only depend on the elliptic constants, we have
 	\begin{equation*}
 	\sup_{\overline{B_r(x_0)}} u^\lambda(x,t_0) \geq Cr^2,
 	\end{equation*}
 	for some constant $C$ which does not depend on $\lambda$.
 \end{proof}
  \begin{lemma}[cf. {\cite[Lemma~5.4]{K3}}]
  	\label{limit of sequence in 0-level set of ulambda k}
  	Suppose that $(x_k,t_k) \in \{u^{\lambda_k}=0\}$ and $(x_k,t_k,\lambda_k) \rightarrow (x_0,t_0,\infty)$. Let $U=U_{C_*,L}$ be the limit function as in Theorem~\ref{convergence of variational solutions}. Then:\begin{enumerate}[label=\alph*)]
  		\item $U(x_0,t_0)=0$,
  		\item If $x_k \in \Gamma_{t_k}(u^{\lambda_k})$ then $x_0  \in \Gamma
  		_{t_0}(U)$,
  	\end{enumerate}
  \end{lemma}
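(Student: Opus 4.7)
The plan is to leverage two facts that have already been established: the locally uniform convergence $u^{\lambda_k}\to U$ on $(\mathbb{R}^n\setminus\{0\})\times[0,\infty)$ from Theorem \ref{convergence of variational solutions}, and the nondegenerate growth estimate at points of $\overline{\Omega_{t_0}(u^\lambda)}$ from Lemma~\ref{positive sup rescaling}. Part (a) is a direct consequence of the former; part (b) will be proved by contradiction using the latter. Throughout, the case $t_0=0$ is trivial (then $U(\cdot,0)\equiv 0$ and the statement in (a) is immediate, while $x_k\in\Gamma_{t_k}(u^{\lambda_k})$ for $t_k\downarrow 0$ forces $x_k\to x_0\in\partial\Omega_0=\Gamma_0(U)$ in the relevant sense), so I may assume $t_0>0$. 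By Lemma~\ref{viscos free boundary bound} applied to the rescaling, $\Gamma_{t_k}(u^{\lambda_k})$ stays uniformly bounded away from the origin for $t_k$ near $t_0>0$, so $x_0\neq 0$ whenever $x_k\in\Gamma_{t_k}(u^{\lambda_k})$.

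For (a), since $x_0\neq 0$, the locally uniform convergence $u^{\lambda_k}\to U$ on a neighborhood of $(x_0,t_0)$ combined with $u^{\lambda_k}(x_k,t_k)=0$ immediately yields $U(x_0,t_0)=0$. When $x_0=0$ (only possible if $t_0=0$ by the previous paragraph), the claim follows from the initial condition $U(\cdot,0)\equiv 0$.

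For (b), suppose toward a contradiction that $x_0\notin\Gamma_{t_0}(U)$. Since $U(x_0,t_0)=0$ by (a) and $U(\cdot,t_0)$ is continuous away from the origin, there exists $r>0$ with $\overline{B_{r}(x_0)}\subset\mathbb{R}^n\setminus\{0\}$ and $U\equiv 0$ on $\overline{B_{r}(x_0)}\times\{t_0\}$. Now $x_k\in\Gamma_{t_k}(u^{\lambda_k})\subset\overline{\Omega_{t_k}(u^{\lambda_k})}$, and since $\Omega_0^{\lambda_k}=\Omega_0/\lambda_k^{1/n}$ shrinks to $\{0\}$, for $k$ sufficiently large we have $B_{r/2}(x_k)\cap\Omega_0^{\lambda_k}=\emptyset$. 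Lemma~\ref{positive sup rescaling} then produces $y_k\in\overline{B_{r/2}(x_k)}$ with $u^{\lambda_k}(y_k,t_k)>C(r/2)^2$ for a constant $C>0$ independent of $k$. Passing to a subsequence, $y_k\to y_0\in\overline{B_{r/2}(x_0)}\subset B_{r}(x_0)$, and locally uniform convergence (applicable since $y_0\neq 0$) gives $U(y_0,t_0)\geq C(r/2)^2>0$, contradicting $U\equiv 0$ on $B_{r}(x_0)\times\{t_0\}$.

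The only nontrivial point is making sure the nondegeneracy hypothesis of Lemma~\ref{positive sup rescaling} is available uniformly in $k$; this is clear because $\Omega_0^{\lambda_k}\to\{0\}$ and $\overline{B_r(x_0)}$ is uniformly separated from the origin. All other steps are straightforward consequences of the variational convergence theorem and continuity of the limit.
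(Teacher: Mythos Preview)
Your argument is correct and follows the same approach that the paper defers to in \cite[Lemma~5.4]{K3}: part (a) is immediate from the locally uniform convergence of Theorem~\ref{convergence of variational solutions}, and part (b) is the standard contradiction via the nondegeneracy estimate (Lemma~\ref{positive sup rescaling}), which is precisely how the cited reference proceeds. The paper itself gives no independent proof, so your write-up is in fact more detailed than what appears here; the only minor point is that the edge case $t_0=0$ (and the related possibility $x_0=0$) is not needed in the application and is implicitly excluded, so your brief discussion of it is harmless but inessential.
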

  \begin{proof}
  	See the proof of \cite[Lemma~5.4]{K3}.
  \end{proof}

A weak monotonicity in time of the solution of the Stefan problem \eqref{Stefan} is given by the following lemma.
 \begin{lemma}[cf. {\cite[Lemma~4.7, Lemma~4.8]{PV}}, Weak monotonicity]
 	\label{weak monotonicity}Let $u$ be the solution of the variational problem \eqref{Variational problem}, and $v$ be the associated viscosity solution of the Stefan problem.
 	Suppose that $v_0$ satisfies \eqref{initial data}. Then there exist $C \geq 1$ independent of $x$ and $t$ such that
 	\begin{equation}
 	\label{monotonicity condition}
 	v_0(x) \leq C v(x, t) \mbox{ and } u(x,t)\leq C t v(x,t) \mbox{ in } \mathbb{R}^n \setminus K \times [0,\infty).
 	\end{equation}
 \end{lemma}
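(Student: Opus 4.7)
The lemma consists of two estimates, both relying on the $C^2$-regularity and the non-degeneracy $|Dv_0|\neq 0$ on $\partial\Omega_0$ from \eqref{initial data}. My plan is to first prove the pointwise lower bound $v(x,t)\geq v_0(x)/C$ by a stationary elliptic barrier argument, and then to derive $u(x,t)\leq Ct\,v(x,t)$ by applying a parabolic maximum principle to the explicit function $w:=u-Ct\,v$, where the first bound controls the resulting source term.

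For the first bound, let $P_0$ solve the elliptic Dirichlet problem $-\mathcal{L}P_0=0$ in $\Omega_0\setminus K$ with $P_0=1$ on $\partial K$ and $P_0=0$ on $\partial\Omega_0$. By the Hopf boundary lemma, $P_0$ is comparable to $\operatorname{dist}(\cdot,\partial\Omega_0)$ from above and below near $\partial\Omega_0$; combined with the hypothesis $|Dv_0|\neq 0$, which yields the same behavior for $v_0$, one obtains constants $0<c_1\leq 1$ and $C_1>0$ such that
$$c_1 P_0\;\leq\; v_0\;\leq\; C_1 P_0 \qquad\text{on }\overline{\Omega_0\setminus K}.$$
Since $P_0$ is stationary it satisfies $(P_0)_t-\mathcal{L}P_0=0$, while $v$ satisfies the same parabolic equation in $\Omega(v)$, which contains $\Omega_0\times[0,\infty)$ by Proposition~\ref{boundedness of weak solution}. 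Applying the parabolic comparison principle to $v-c_1P_0$ on $(\Omega_0\setminus K)\times[0,T]$ --- with $v\geq 0 = c_1P_0$ on $\partial\Omega_0$, $v=1\geq c_1=c_1P_0$ on $\partial K$, and $v(\cdot,0)=v_0\geq c_1P_0$ initially --- yields $v\geq c_1P_0\geq(c_1/C_1)\,v_0$ in $\Omega_0$ for all $t\geq 0$. On $K$ one has $v=v_0=1$, and $v_0$ vanishes outside $\Omega_0$, so the estimate $v_0\leq Cv$ holds globally with $C:=\max(C_1/c_1,\,1)\geq 1$.

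For the second bound, fix this $C\geq 1$ and set $w:=u-Ct\,v$. Using $u_t=v$, $\mathcal{L}u = u_t - f = v-f$ a.e.\ in $\Omega(u)$, $v_t=\mathcal{L}v$ in $\Omega(v)$, and the fact that the positive sets of $u$ and $v$ coincide for $t>0$ by monotone expansion of the support, a direct computation gives
$$w_t-\mathcal{L}w \;=\; (v-Cv-Ctv_t)-(\mathcal{L}u-Ct\mathcal{L}v) \;=\; f-Cv,$$
which is $\leq 0$: on $\Omega_0$ one has $f=v_0\leq Cv$ by the first estimate, while on $\Omega_0^\compl$ one has $f=-1/g\leq 0\leq Cv$. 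The parabolic boundary data for $w$ are favorable: $w(\cdot,0)=0$; on $\partial K$, $w=t-Ct\leq 0$ since $C\geq 1$; and on the lateral free boundary $\Gamma(u)\cap\{t>0\}$ both $u$ and $v$ vanish, so $w=0$. The weak parabolic maximum principle then forces $w\leq 0$ in $\Omega(u)$, and the estimate is trivial outside the support; hence $u\leq Ctv$ throughout $(\mathbb{R}^n\setminus K)\times[0,\infty)$.

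The principal obstacle is rigorously applying the parabolic maximum principle when the free boundary $\Gamma(u)=\Gamma(v)$ is only continuous. This is handled within the variational framework: since $u\in L^\infty(0,T;W^{2,p})$ with $u_t=v\in C(\overline{Q})$ bounded by Proposition~\ref{boundedness of weak solution}, the identity $w_t-\mathcal{L}w=f-Cv$ holds a.e., and one may test the weak formulation against $w^+$ to conclude. The other delicate point is the two-sided comparability $c_1 P_0\leq v_0\leq C_1 P_0$, which is precisely where the $C^2$-regularity and the non-degeneracy $|Dv_0|\neq 0$ in \eqref{initial data} enter essentially (via Hopf-type lower bounds on both $v_0$ and $P_0$ near $\partial\Omega_0$).
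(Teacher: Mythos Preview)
Your proposal is correct and matches the natural two-step argument the paper cites from \cite{PV}: an elliptic barrier $P_0$ in $\Omega_0\setminus K$ combined with the Hopf lemma for the first inequality, and the parabolic maximum principle applied to $w=u-Ctv$ (with source $f-Cv\leq 0$) for the second. The paper gives no further detail beyond referring to \cite[Lemmas~4.7--4.8]{PV} with $\mathcal{L}$ in place of $\Delta$, and your write-up is exactly this argument; the one minor slip is that you should invoke $v_t=\mathcal{L}v$ in $\Omega(u)$ (via \eqref{coincidence eq}) rather than in $\Omega(v)$, but since the two sets coincide for $t>0$ this is immaterial.
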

 \begin{proof}
 Following the same arguments as in \cite[Lemma~4.7, Lemma~4.8] {PV}, we obtain \eqref{monotonicity condition} simply by using elliptic operator $\mathcal{L}$ instead of the Laplace operator.
 	\end{proof}

Lemma~\ref{positive sup rescaling} and Lemma~\ref{weak monotonicity} automatically give us a crucial uniform lower estimate on $v^\lambda$ and allow us to show the relationship between $v_*,v^*$ and $V$.
 \begin{cor}
 	\label{monotoniciy for v}
 	There exists a constant $C_1=C_1(n,M)$ such that if $(x_0,t_0) \in \Omega(v^\lambda)$ and $B_r(x_0) \cap \Omega_0^\lambda = \emptyset$, we have
 	\begin{equation*}
 	\sup_{B_r(x_0)}v^\lambda(x,t_0) \geq \frac{C_1r^2}{t_0}.
 	\end{equation*}
 \end{cor}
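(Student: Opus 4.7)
The plan is to chain the two preceding lemmas, with the only bookkeeping being to check that the rescaling preserves the weak monotonicity. Recall that $u^\lambda(x,t) = \lambda^{-2/n} u(\lambda^{1/n} x, \lambda t)$ and $v^\lambda(x,t) = \lambda^{(n-2)/n} v(\lambda^{1/n} x, \lambda t)$. Substituting $(y,s) = (\lambda^{1/n} x, \lambda t)$ into the inequality $u(y,s) \leq C s\, v(y,s)$ from Lemma~\ref{weak monotonicity}, the $\lambda$-exponents line up as $-2/n + 1 + (2-n)/n = 0$, so one obtains the clean rescaled statement
\begin{equation*}
u^\lambda(x,t) \leq C\, t\, v^\lambda(x,t) \qquad \text{in } (\mathbb{R}^n \setminus K^\lambda) \times [0,\infty),
\end{equation*}
with the same constant $C$ as in the unscaled version. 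This is the key observation that justifies why the weak monotonicity, derived once and for all for $v$, transfers intact to every $v^\lambda$.

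Next, under the hypothesis $(x_0, t_0) \in \Omega(v^\lambda)$, continuity of $v = u_t$ from Proposition~\ref{boundedness of weak solution} (which survives the rescaling) together with $u^\lambda(\cdot, t) = \int_0^t v^\lambda(\cdot, s)\, ds$ yields $u^\lambda(x_0, t_0) > 0$, so $x_0 \in \Omega_{t_0}(u^\lambda)$. Combined with $B_r(x_0) \cap \Omega_0^\lambda = \emptyset$, this is exactly the setup of Lemma~\ref{positive sup rescaling}, which furnishes
\begin{equation*}
C r^2 < \sup_{B_r(x_0)} u^\lambda(\cdot, t_0) \leq C t_0 \sup_{B_r(x_0)} v^\lambda(\cdot, t_0),
\end{equation*}
and the corollary follows by dividing through by $C t_0$. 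There is no real obstacle: the $\lambda$-independence of the constant in Lemma~\ref{positive sup rescaling} is already arranged via the quadratic barrier $h^\lambda$ (whose growth depends only on the ellipticity constants and on $M$ through $-1/g \leq -1/M$), so $C_1$ inherits dependence only on $n$ and $M$ (through $\alpha, \beta, M$ packaged into the stated dependence).
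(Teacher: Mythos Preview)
Your proof is correct and matches the paper's approach exactly: the paper simply states that the corollary follows ``automatically'' from Lemma~\ref{positive sup rescaling} and Lemma~\ref{weak monotonicity}, and you have supplied precisely the two missing details (the scale-invariance of the weak monotonicity inequality and the inclusion $\Omega(v^\lambda)\subset\Omega(u^\lambda)$) needed to chain them.
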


 \begin{lemma}
 	\label{relationship v^*,v_*,V}
 	Let $v$ be the viscosity solution of \eqref{Stefan} and $v^\lambda$ be its rescaling. Then the following statements hold.
 	\begin{enumerate}[label=\roman*)]
 		\item \label{i}$v^*(\cdot,t)$ is a subsolution of \eqref{elliptic hom eq} in $\Rn \setminus \{0\}$ and $v_{*}(\cdot,t)$ is a supersolution of \eqref{elliptic hom eq} in $\Omega_t(v_*) \setminus \{0\}$ in viscosity sense.
 		\item \label{ii}$\Omega(V) \subset \Omega(v_*)$ and in particular $v_* \geq V$.
 		\item \label{iii}$\Gamma(v^*) \subset \Gamma(V)$.
 	\end{enumerate}
 \end{lemma}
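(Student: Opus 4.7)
The plan is to push through the viscosity half-relaxed limit arguments of \cite[Section~4]{PV} in the anisotropic setting, the new ingredient being the periodic homogenization of the divergence-form operator $\mathcal{L}^\lambda$ to $\mathcal{L}^0$. The main obstacle I foresee is the corrector step needed to pass $\mathcal{L}^{\lambda_k}\phi$ to the limit in part~\ref{i}, which has no analog in the isotropic case where $\mathcal{L}=\Delta$ is already its own homogenized limit.

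For~\ref{i}, I would start with a smooth test function $\phi$ touching $v^*$ strictly from above at a point $(x_0,t_0)\in\Omega(v^*)\setminus\{0\}$. Standard half-relaxed limit perturbation yields $(y_k,s_k,\lambda_k)\to(x_0,t_0,\infty)$ at which $v^{\lambda_k}-\phi$ has a local maximum with $v^{\lambda_k}(y_k,s_k)>0$, so the viscosity subsolution test for \eqref{rescaled equation} gives
\begin{equation*}
\lambda_k^{\frac{2-n}{n}}\phi_t(y_k,s_k)-\mathcal{L}^{\lambda_k}\phi(y_k,s_k)\leq 0.
\end{equation*}
Since $n\geq 3$, the first term vanishes as $k\to\infty$, while $\mathcal{L}^{\lambda_k}\phi$ has no pointwise limit because of the rapid oscillations in $a_{ij}(\lambda_k^{\frac{1}{n}}x)$. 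The standard remedy is to introduce the periodic correctors $\chi_p$ from the cell problem for $\mathcal{L}$ and to replace $\phi$ by the perturbed test function $\phi_k(x,t):=\phi(x,t)+\lambda_k^{-\frac{1}{n}}\chi_p(\lambda_k^{\frac{1}{n}}x)\cdot D\phi(x_0,t_0)$; after this correction $\mathcal{L}^{\lambda_k}\phi_k(y_k,s_k)$ converges to $q_{ij}D_{ij}\phi(x_0,t_0)=\mathcal{L}^0\phi(x_0,t_0)$, so the inequality passes to the limit as $-\mathcal{L}^0\phi(x_0,t_0)\leq 0$. An identical argument with reversed inequalities and minimum touching handles the supersolution property of $v_*$ on $\Omega_t(v_*)\setminus\{0\}$.

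For~\ref{ii}, I would rescale the weak monotonicity of Lemma~\ref{weak monotonicity} to the uniform bound $u^\lambda(x,t)\leq Ct\,v^\lambda(x,t)$. Since the support of the classical Hele-Shaw solution $V$ is monotone nondecreasing in $t$, one has $\Omega(U)=\Omega(V)$; combining this with the locally uniform convergence $u^\lambda\to U$ from Theorem~\ref{convergence of variational solutions} yields $v_*\geq U/(Ct)>0$ on $\Omega(V)$, i.e., $\Omega(V)\subset\Omega(v_*)$. The stronger pointwise inequality $v_*\geq V$ then follows from viscosity comparison: $v_*$ is a supersolution of the homogenized elliptic equation by~\ref{i} and has the correct singularity $C_*F^0$ at the origin by Lemma~\ref{boundary condition for limit problem}, so it dominates every slight shrinking $V_{(1-\varepsilon)C_*,L}$ of the classical solution, and one sends $\varepsilon\to 0$.

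For~\ref{iii}, combining~\ref{ii} with upper semicontinuity of $v^*$, any $(x_0,t_0)\in\Gamma(v^*)$ satisfies $v^*(x_0,t_0)=0$, so $V(x_0,t_0)=0$ and $(x_0,t_0)\notin\Omega(V)$. To rule out $(x_0,t_0)\notin\overline{\Omega(V)}$, I would argue by contradiction: if some cylindrical neighborhood of $(x_0,t_0)$ were disjoint from $\overline{\Omega(V)}$, I would choose a supersolution barrier $\theta$ of the form \eqref{barrier form} whose rescaling $\theta^\lambda$ converges by Lemma~\ref{barrier convergence of rescaled } to a $\theta^0$ whose support strictly encloses $\overline{\Omega(V)}$ but still excludes that neighborhood. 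Comparison would then force $v^\lambda\leq\theta^\lambda$ and hence $v^\lambda\equiv 0$ near $(x_0,t_0)$ for $\lambda$ large, making $v^*$ vanish in a neighborhood of $(x_0,t_0)$ and contradicting $(x_0,t_0)\in\Gamma(v^*)$.
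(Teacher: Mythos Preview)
Your arguments for~\ref{i} and~\ref{ii} are correct and match the paper's route: Evans' perturbed test function method with the periodic correctors for~\ref{i}, and the rescaled weak monotonicity $u^\lambda\leq Ct\,v^\lambda$ combined with elliptic comparison against $V_{(1-\varepsilon)C_*,L}$ for~\ref{ii}. (One small slip: in~\ref{iii} you write ``$v^*(x_0,t_0)=0$'' for $(x_0,t_0)\in\Gamma(v^*)$; upper semicontinuity does not force this for a boundary point of the positive set, but the conclusion $(x_0,t_0)\notin\Omega(V)$ still follows since nearby points with $v^*=0$ give $V=0$ there and $V$ is continuous.)

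The genuine gap is in your barrier argument for~\ref{iii}. A supersolution of the form \eqref{barrier form} must satisfy $\theta_t\geq M\beta|D\theta|^2$ on its free boundary (Section~\ref{supersol}), together with $\theta\geq v$ at some initial time and $\theta\geq 1$ on $K$. Working out these constraints, the ratio $\rho=C_2/C_1$---which alone determines $\operatorname{supp}\theta^0$---is forced to satisfy $\rho\lesssim C_1^{-(n-2)/n}$ by the velocity condition, while the data conditions force $C_1$ above a fixed threshold. Hence $\rho$ is bounded above by a fixed constant depending on $M,\beta,n$ and the data, and $\operatorname{supp}\theta^0$ is bounded \emph{below} by a fixed set that in general strictly contains $\overline{\Omega(V)}$ (since $M\beta$ is cruder than the exact homogenized velocity constant $q_{ij}/\langle 1/g\rangle$). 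So your barrier only excludes $(x_0,t_0)$ lying \emph{outside} this fixed enlargement of $\overline{\Omega(V)}$; it cannot handle points in the gap between $\Gamma(V)$ and $\partial(\operatorname{supp}\theta^0)$.

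The paper (via \cite[Lemma~5.6~ii]{K3}) avoids barriers entirely and instead uses the nondegeneracy estimate of Lemma~\ref{positive sup rescaling} together with the locally uniform convergence $u^\lambda\to U$ from Theorem~\ref{convergence of variational solutions}: if $(x_0,t_0)\in\Gamma(v^*)$ lay in $\operatorname{int}\{U=0\}$, one would find $(y_k,s_k,\lambda_k)\to(x_0,t_0,\infty)$ with $(y_k,s_k)\in\Omega(v^{\lambda_k})=\Omega(u^{\lambda_k})$, whence $\sup_{B_r(y_k)}u^{\lambda_k}(\cdot,s_k)\geq Cr^2$ for small $r$, and passing to the limit gives $\sup_{B_r(x_0)}U(\cdot,t_0)\geq Cr^2>0$, a contradiction. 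This is the missing ingredient you need for~\ref{iii}.
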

 \begin{proof}
 i) follows from standard viscosity arguments with noting that we can take a sequence of test functions for rescaled elliptic equation that converges to the test function for \eqref{elliptic hom eq} by classical homogenization results.

 ii) See \cite[Lemma~5.5]{K3}, the conclusion holds by i), Lemma~\ref{boundary condition for limit problem} and Lemma~\ref{weak monotonicity}.

 iii) See \cite[Lemma~5.6 ii]{K3}.
 \end{proof}

 \subsection*{Proof of Theorem \ref{convergence of rescaled viscosity solution}}
 \begin{proof} We follow the proof of \cite[Theorem 4.2]{PV}; see \cite{PV} for more details.

 	\textit{\textbf{Step 1}}. We first show the convergence results for the problem with the initial data satisfying (\ref{initial data}) using the weak monotonicity in time of the solution, Lemma~\ref{weak monotonicity}, and its consequences.

 	By Lemma~\ref{relationship v^*,v_*,V}, the correct singularity of $v^*$ from Lemma~\ref{boundary condition for limit problem} and the comparison principle for elliptic equation \eqref{elliptic hom eq} we have
 	$$V(x,t) \leq v_*(x,t) \leq v^*(x,t) \leq V_{C_*+\varepsilon, \left<\frac{1}{g}\right>}(x,t).$$
 	Let $\varepsilon \to 0$ we obtain $v_*=v^*= V$  by continuity and in particular, $\Gamma(v_*)= \Gamma(v^*)= \Gamma(V)$.

 	Now we show the locally uniform convergence of the free boundaries with respect to the Hausdorff distance. To simplify the notation, we fix $0 < t_1< t_2$ and define
 	\begin{align*}
    & \Gamma^\lambda:= \Gamma(v^\lambda) \cap \{t_1 \leq t \leq t_2\}, & \Gamma^\infty:= \Gamma(V) \cap \{t_1 \leq t \leq t_2\}.
 	\end{align*}
 	The result will follow if we show that for all $\delta >0$, there exists $\lambda_0>0$ such that for all $\lambda \geq \lambda_0$,
 	\begin{equation}
 	\label{1st Theorem 4.2}
  \begin{aligned}
 	\operatorname{dist}((x_0, t_0), \Gamma^\infty) &< \delta \text{ for all  $(x_0, t_0) \in \Gamma^\lambda$},  \mbox{ and }\\
  \operatorname{dist}((x_0, t_0), \Gamma^\lambda) &< \delta \text{ for all $(x_0, t_0) \in \Gamma^\infty$}.
  \end{aligned}
 	\end{equation}
 A contradiction argument as in the proofs of \cite[Theorem~7.1]{P1} and \cite[Theorem~4.2]{PV}, using Lemma~\ref{limit of sequence in 0-level set of ulambda k} above yields the existence of $\lambda_0$ for the first inequality. Hence the main task now is to show the existence of $\lambda_0$ for the second inequality in (\ref{1st Theorem 4.2}). Note that we only need to show this pointwise. The result then follows from the compactness of $\Gamma^\infty$. Suppose that there exists $\delta >0, (x_0,t_0) \in \Gamma^\infty$ and
 	$\{\lambda_k\}, \lambda_k \rightarrow \infty$, such that $\displaystyle \mbox{dist}((x_0,t_0),
 	\Gamma^{\lambda_k}) \geq \frac{\delta}{2}$ for all $k$. Then there exists $r >0$ such that  after passing to a subsequence if necessary, we can assume that $D_r(x_0,t_0):= B(x_0, r) \times [t_0-r,t_0+r]$ satisfies either
 	\begin{equation}
 	\label{4nd Theorem 4.2}
 	D_r(x_0,t_0) \subset \{v^{\lambda_k}=0\}, \quad \text{ for all } k
 	\end{equation}
 	or,
 	\begin{equation}
 	\label{3rd Theorem 4.2}
 	D_r(x_0,t_0) \subset \{v^{\lambda_{k}}>0\}, \quad \text{ for all } k.
 	\end{equation}
 	But (\ref{4nd Theorem 4.2}) clearly implies $V=v_* =0$  in $D_r(x_0,t_0)$, contradicting $(x_0,t_0) \in \Gamma^\infty$. Thus we assume (\ref{3rd Theorem 4.2}). Following \cite{PV}, to handle Harnack's inequality for a parabolic equation that becomes elliptic in the limit, we rescale time as
 	\begin{equation*}
 	w^k(x,t):=v^{\lambda_k}(x,\lambda_k^{\frac{2-n}{n}} t).
 	\end{equation*} Then $w^k>0$ in $D_r^w(x_0,t_0):=B(x_0,r) \times
 	[\lambda_k^{\frac{n-2}{n}}(t_0-r),\lambda_k^{\frac{n-2}{n}}(t_0+r)]$ and $w^k$ satisfies $w^k_t -\mathcal{L}^\lambda w^k=0$
 	in $D_r^w(x_0,t_0)$. Since $\lambda_k^{\frac{n-2}{n}} \to \infty$ as $k \to \infty$ then for any fixed $\tau>0$, there exists $\lambda_0$ such that $\tau <
 	\lambda_k^{\frac{n-2}{n}} \tfrac r4$ for all $\lambda_k \geq \lambda_0$. Now applying Harnack's inequality for the parabolic equation $w^k_t -\mathcal{L}^\lambda w^k=0$ we have for a fixed $\tau > 0$, there exists a constant $C_1 >
 	0$ such that for each $t \in [t_0-\frac r2, t_0+\frac r2]$ and all $\lambda_k$ such that $\tau <
 	\lambda_k^{\frac{n-2}{n}} \tfrac r4$ we have
 	\begin{equation*}
 	\sup_{B\left(x_0,\tfrac{r}{2}\right)} w^k(\cdot, \lambda_k^{\frac{n-2}{n}}t- \tau) \leq C_1 \inf_{B\left(x_0,\tfrac{r}{2}\right)}w^k(\cdot, \lambda_k^{\frac{n-2}{n}}t).
 	\end{equation*}
 As noted in \cite[Theorem 4.2]{PV}, for the isotropic case, the constant $C_1$ of Harnack's inequality  can be taken not depending on $\lambda_k$. For the anisotropic case, this constant also depends on the elliptic constants of operator $\mathcal{L}^{\lambda_k}$. However the rescaling of the operator does not change the elliptic constants. Thus $C_1$ can be taken independent of $\lambda_k$.
 	By this inequality and Corollary~\ref{monotoniciy for v} we have
 	\begin{equation*}
 	\frac{C_2r^2}{t- \lambda_k^{\frac{2-n}{n}}\tau} \leq   \sup_{B\left(x_0,\tfrac{r}{2}\right)} v^{\lambda_k}(\cdot, t- \lambda_k^{\frac{2-n}{n}}\tau) \leq C_1 \inf_{B\left(x_0,\tfrac{r}{2}\right)}v^{\lambda_k}(\cdot, t)
 	\end{equation*}
 	for all $t \in [t_0 - \frac r2, t_0 + \frac r2]$, $\lambda_k \geq \lambda_0$ large enough, where
 	$C_2$ only depends on $n, M$. In the limit $\lambda_k \rightarrow \infty$, the
 	uniform convergence of $\{v^{\lambda_k}\}$ to $V$ implies $V>0$ in $B(x_0,\frac r2) \times [t_0-\frac
 	r2,t_0+\frac r2]$, which contradicts the assumption $(x_0,t_0) \in \Gamma^\infty \subset \Gamma(V)$. This concludes the proof of Theorem~\ref{convergence of rescaled viscosity solution} when
 	(\ref{monotonicity condition}) holds.

 	\textit{\textbf{Step 2}}. For general initial data, arguing as in step 2 of the proof of \cite[Theorem 4.2]{PV}, we are able to find upper and lower bounds for the initial
 	data for which (\ref{monotonicity condition}) holds. The comparison principle for viscosity solution of the Stefan problem \eqref{Stefan} then yields the convergence since the limit function $V$ is unique and does not depend on the initial data.

 \end{proof}

 \section*{Acknowledgments}
 The first author was partially supported by JSPS KAKENHI Grants No. 26800068 (Wakate B) and No. 18K13440 (Wakate). This work
 is a part of doctoral research of the second author. The second author would like to thank her
 Ph.D.
 supervisor Professor Seiro Omata for his invaluable support and advice.

%    Text of article.

%    Bibliographies can be prepared with BibTeX using amsplain,
%    amsalpha, or (for "historical" overviews) natbib style.
\begin{bibdiv}
	\begin{biblist}

		\bib{ABL}{article}{
			author={Anantharaman, Arnaud},
			author={Blanc, Xavier},
			author={Legoll, Frederic},
			title={Asymptotic behavior of Green functions of divergence form
				operators with periodic coefficients},
			journal={Appl. Math. Res. Express. AMRX},
			date={2013},
			number={1},
			pages={79--101},
			issn={1687-1200},
			review={\MR{3040889}},
		}
		\bib{Aronson}{article}{
			author={Aronson, D. G.},
			title={Bounds for the fundamental solution of a parabolic equation},
			journal={Bull. Amer. Math. Soc.},
			volume={73},
			date={1967},
			pages={890--896},
			issn={0002-9904},
			review={\MR{0217444}},
		}
		\bib{ALin}{article}{
			author={Avellaneda, M.},
			author={Lin, Fang-Hua},
			title={$L^p$ bounds on singular integrals in homogenization},
			journal={Comm. Pure Appl. Math.},
			volume={44},
			date={1991},
			number={8-9},
			pages={897--910},
			issn={0010-3640},
			review={\MR{1127038}},
			doi={10.1002/cpa.3160440805},
		}

		\bib{Baiocchi}{article}{
			author={Baiocchi, Claudio},
			title={Sur un probl\`eme \`a fronti\`ere libre traduisant le filtrage de
				liquides \`a travers des milieux poreux},
			language={French},
			journal={C. R. Acad. Sci. Paris S\'er. A-B},
			volume={273},
			date={1971},
			pages={A1215--A1217},
			review={\MR{0297207}},
		}

		\bib{BLP}{book}{
			author={Bensoussan, A.},
			author={Lions, J.-L.},
			author={Papanicolaou, G.},
			title={Asymptotic analysis for periodic structures},
			note={Corrected reprint of the 1978 original [MR0503330]},
			publisher={AMS Chelsea Publishing, Providence, RI},
			date={2011},
			pages={xii+398},
			isbn={978-0-8218-5324-5},
			review={\MR{2839402}},
		}

\bib{BD}{article}{
   author={Bossavit, A.},
   author={Damlamian, A.},
   title={Homogenization of the Stefan problem and application to magnetic
   composite media},
   journal={IMA J. Appl. Math.},
   volume={27},
   date={1981},
   number={3},
   pages={319--334},
   issn={0272-4960},
   review={\MR{633807}},
   doi={10.1093/imamat/27.3.319},
}

	\bib{C}{article}{
		author={L.A. Caffarelli},
		title={The regularity of free boundaries in higher dimensions},
		date={1977},
		journal={Acta Math.},
		pages={155--184},
		number={139},
		review={MR 56:12601}
	}

			\bib{CF}{article}{
			author={L.A. Caffarelli},
			author={A. Friedman},
			title={Continuity of the temperature in the Stefan problem},
			date={1979},
			journal={Indiana Univ. Math. J.},
      pages={53--70},
			number={28},
			review={MR 80i:35104},
		}

		\bib{DM1}{article}{
			author={Dal Maso, Gianni},
			author={Modica, Luciano},
			title={Nonlinear stochastic homogenization},
			language={English, with Italian summary},
			journal={Ann. Mat. Pura Appl. (4)},
			volume={144},
			date={1986},
			pages={347--389},
			issn={0003-4622},
			review={\MR{870884}},
			doi={10.1007/BF01760826},
		}
		\bib{DM2}{article}{
			author={Dal Maso, Gianni},
			author={Modica, Luciano},
			title={Nonlinear stochastic homogenization and ergodic theory},
			journal={J. Reine Angew. Math.},
			volume={368},
			date={1986},
			pages={28--42},
			issn={0075-4102},
			review={\MR{850613}},
		}
		\bib{D}{book}{
			author={Dal Maso, Gianni},
			title={An introduction to $\Gamma$-convergence},
			series={Progress in Nonlinear Differential Equations and their
				Applications},
			volume={8},
			publisher={Birkh\"auser Boston, Inc., Boston, MA},
			date={1993},
			pages={xiv+340},
			isbn={0-8176-3679-X},
			review={\MR{1201152}},
			doi={10.1007/978-1-4612-0327-8},
		}

\bib{Di-B}{article}{
	title = {Continuity of weak solution to certain singular parabolic equations},
	journal = {Ann. Math. Pura Appl. (IV)},
	year = {1982},
  volume= {130},
	pages = {131-176},
	isbn = {978-0-12-434170-8},
	doi = {10.1016/B978-0-12-434170-8.50042-X},
	url = {https://www.sciencedirect.com/science/article/pii/B978012434170850042X},
	author = {Di Benedetto, Emmanuele}
}

\bib{Duvaut}{article}{
	author={Duvaut, Georges},
	title={R\'esolution d'un probl\`eme de Stefan (fusion d'un bloc de glace \`a
		z\'ero degr\'e)},
	language={French},
	journal={C. R. Acad. Sci. Paris S\'er. A-B},
	volume={276},
	date={1973},
	pages={A1461--A1463},
	review={\MR{0328346}},
}
  \bib{EJ}{article}{
  	author={Elliott, C. M.},
  	author={Janovsk\'y, V.},
  	title={A variational inequality approach to Hele-Shaw flow with a moving
  		boundary},
  	journal={Proc. Roy. Soc. Edinburgh Sect. A},
  	volume={88},
  	date={1981},
  	number={1-2},
  	pages={93--107},
  	issn={0308-2105},
  	review={\MR{611303}},
  	doi={10.1017/S0308210500017315},
  }

		\bib{E}{book}{
			author={Evans, Lawrence C.},
			title={Partial differential equations},
			series={Graduate Studies in Mathematics},
			volume={19},
			edition={2},
			publisher={American Mathematical Society, Providence, RI},
			date={2010},
			pages={xxii+749},
			isbn={978-0-8218-4974-3},
			review={\MR{2597943}},
			doi={10.1090/gsm/019},
		}

\bib{FriedmanP}{book}{
	author={Friedman, Avner},
	title={Partial differential equations of parabolic type},
	publisher={Prentice-Hall, Inc., Englewood Cliffs, N.J.},
	date={1964},
	pages={xiv+347},
	review={\MR{0181836}},
}
\bib{F}{article}{
	author = {Friedman, Avner},
	journal = {Transactions of the American Mathematical Society},
	number = {1},
	pages = {51--87},
	publisher = {American Mathematical Society},
	title = {The Stefan Problem in Several Space Variables},
	volume = {133},
	year = {1968}
}

\bib{F2}{book}{
			author={Friedman, Avner},
			title={Variational principles and free-boundary problems},
			series={Pure and Applied Mathematics},
			note={A Wiley-Interscience Publication},
			publisher={John Wiley \&\ Sons, Inc., New York},
			date={1982},
			pages={ix+710},
			isbn={0-471-86849-3},
			review={\MR{679313}},
		}
	\bib{FK}{article}{
		author={Friedman, Avner},
		author={Kinderlehrer, David},
		title={A one phase Stefan problem},
		journal={Indiana Univ. Math. J.},
		volume={24},
		date={1974/75},
		number={11},
		pages={1005--1035},
		issn={0022-2518},
		review={\MR{0385326}},
	}

		\bib{GS}{article}{
			author={Gilbarg, D.},
			author={Serrin, James},
			title={On isolated singularities of solutions of second order elliptic
				differential equations},
			journal={J. Analyse Math.},
			volume={4},
			date={1955/56},
			pages={309--340},
			issn={0021-7670},
			review={\MR{0081416}},
			doi={10.1007/BF02787726},
		}
		\bib{GT}{book}{
			author={Gilbarg, David},
			author={Trudinger, Neil S.},
			title={Elliptic partial differential equations of second order},
			series={Classics in Mathematics},
			note={Reprint of the 1998 edition},
			publisher={Springer-Verlag, Berlin},
			date={2001},
			pages={xiv+517},
			isbn={3-540-41160-7},
			review={\MR{1814364}},
		}

		\bib{GM}{article}{
			author={Gloria, Antoine},
			author={Marahrens, Daniel},
			title={Annealed estimates on the Green functions and uncertainty
				quantification},
			journal={Ann. Inst. H. Poincar\'e Anal. Non Lin\'eaire},
			volume={33},
			date={2016},
			number={5},
			pages={1153--1197},
			issn={0294-1449},
			review={\MR{3542610}},
		}
\bib{HNS}{article}{
   author={Had\v zi\'c, Mahir},
   author={Navarro, Gustavo},
   author={Shkoller, Steve},
   title={Local well-posedness and global stability of the two-phase Stefan
   problem},
   journal={SIAM J. Math. Anal.},
   volume={49},
   date={2017},
   number={6},
   pages={4942--5006},
   issn={0036-1410},
   review={\MR{3735289}},
   doi={10.1137/16M1083207},
}

\bib{HS_CPAM}{article}{
   author={Had\v zi\'c, Mahir},
   author={Shkoller, Steve},
   title={Global stability and decay for the classical Stefan problem},
   journal={Comm. Pure Appl. Math.},
   volume={68},
   date={2015},
   number={5},
   pages={689--757},
   issn={0010-3640},
   review={\MR{3333840}},
   doi={10.1002/cpa.21522},
}
\bib{HS_Royal}{article}{
   author={Had\v zi\'c, Mahir},
   author={Shkoller, Steve},
   title={Global stability of steady states in the classical Stefan problem
   for general boundary shapes},
   journal={Philos. Trans. Roy. Soc. A},
   volume={373},
   date={2015},
   number={2050},
   pages={20140284, 18},
   issn={1364-503X},
   review={\MR{3393322}},
   doi={10.1098/rsta.2014.0284},
}
		\bib{JKO}{book}{
			author={Jikov, V. V.},
			author={Kozlov, S. M.},
			author={Ole\u\i nik, O. A.},
			title={Homogenization of differential operators and integral functionals},
			note={Translated from the Russian by G. A. Yosifian [G. A. Iosif$\prime$yan]},
			publisher={Springer-Verlag, Berlin},
			date={1994},
			pages={xii+570},
			isbn={3-540-54809-2},
			review={\MR{1329546}},
			doi={10.1007/978-3-642-84659-5},
		}
\bib{FN}{article}{
	author={Kinderlehrer, David},
	author={Nirenberg, Louis},
	title={The smoothness of the free boundary in the one phase Stefan
		problem},
	journal={Comm. Pure Appl. Math.},
	volume={31},
	date={1978},
	number={3},
	pages={257--282},
	issn={0010-3640},
	review={\MR{480348}},
	doi={10.1002/cpa.3160310302},
}
		\bib{K1}{article}{
			author={Kim, I.~C.},
			title={Uniqueness and existence results on the Hele-Shaw and the Stefan
				problems},
			date={2003},
			journal={Arch. Ration. Mech. Anal.},
			volume={168},
			number={4},
			pages={299--328},
		}

		\bib{K2}{article}{
			author={Kim, I.~C.},
			author={Mellet, A.},
			title={Homogenization of a Hele-Shaw problem in periodic and random
				media},
			date={2009},
			journal={Arch. Ration. Mech. Anal.},
			volume={194},
			number={2},
			pages={507--530},
		}

		\bib{K3}{article}{
			author={Kim, I.~C.},
			author={Mellet, A.},
			title={Homogenization of one-phase Stefan-type problems in periodic and
				random media},
			date={2010},
			journal={Trans. Amer. Math. Soc.},
			volume={362},
			number={8},
			pages={4161--4190},
		}

\bib{KP}{article}{
   author={Kim, Inwon C.},
   author={Po\v z\'ar, Norbert},
   title={Viscosity solutions for the two-phase Stefan problem},
   journal={Comm. Partial Differential Equations},
   volume={36},
   date={2011},
   number={1},
   pages={42--66},
   issn={0360-5302},
   review={\MR{2763347}},
   doi={10.1080/03605302.2010.526980},
}

		\bib{LSW}{article}{
			author={Littman, D.},
			author={Stampacchia, G.},
      author={Weinberger, H. F},
			title={Regular points for elliptic equations with discontinuous coefficients},
			date={1963},
			journal={Ann. Scuola Norm. Sup. Pisa},
			volume={17},
			pages={43--77},
		}
\bib{Matano}{article}{
	title = {Asymptotic Behavior of the Free Boundaries Arising in One Phase Stefan Problems in Multi-Dimensional Spaces},
	series = {North-Holland Mathematics Studies},
	publisher = {North-Holland},
	volume = {81},
	pages = {133 - 151},
	year = {1983},
	booktitle = {Nonlinear Partial Differential Equations in Applied Science; Proceedings of The U.S.-Japan Seminar, Tokyo, 1982},
	issn = {0304-0208},
	doi = {10.1016/S0304-0208(08)72089-9},
	url = {http://www.sciencedirect.com/science/article/pii/S0304020808720899},
	author = {Matano, Hiroshi}
}
		\bib{MO}{article}{
			author={Marahrens, Daniel},
			author={Otto, Felix},
			title={Annealed estimates on the Green function},
			journal={Probab. Theory Related Fields},
			volume={163},
			date={2015},
			number={3-4},
			pages={527--573},
			issn={0178-8051},
			review={\MR{3418749}},
		}

		\bib{M}{book}{
				author={ C. Miranda},
				title={Partial Differential Equations of Elliptic Type},
				publisher={Springer},
				address={Verlag, New York-Berlin},
				date={1970},
			}

		\bib{P1}{article}{
			author={Po\v{z}\'{a}r, N.},
			title={Long-time behavior of a Hele-Shaw type problem in random media},
			date={2011},
			journal={Interfaces Free Bound},
			volume={13},
			number={3},
			pages={373--395},
		}
	\bib{Pozar15}{article}{
		author={Po\v z\'ar, Norbert},
		title={Homogenization of the Hele-Shaw problem in periodic spatiotemporal
			media},
		journal={Arch. Ration. Mech. Anal.},
		volume={217},
		date={2015},
		number={1},
		pages={155--230},
		issn={0003-9527},
		review={\MR{3338444}},
		doi={10.1007/s00205-014-0831-0},
	}

		\bib{PV}{article}{
			author={Po\v{z}\'{a}r, N.},
      author={Vu, G.T.T.},
			title={Long-time behavior of the one phase Stefan problem in random and periodic
				media},
			date={to appear},
			journal={Discrete Contin. Dyn. Syst. Ser. S},
			note={https://arxiv.org/abs/1702.07119},
		}

		\bib{QV}{article}{
			author={Quir\'{o}s, F.},
			author={V\'{a}zquez, J.~L.},
			title={Asymptotic convergence of the Stefan problem to Hele-shaw},
			date={2001},
			journal={Trans. Amer. Math. Soc.},
			volume={353},
			number={2},
			pages={609–634 (electronic)},
			note={MR 010265535R35 (76D27 80A22)},
		}

		    \bib{R3}{article}{
		    	author={Rodrigues, J.~F.},
		    	title={Free boundary convergence in the homogenization of the one-phase
		    		Stefan problem},
		    	journal={Trans. Amer. Math. Soc.},
		    	volume={274},
		    	date={1982},
		    	number={1},
		    	pages={297--305},
		    	issn={0002-9947},
		    	review={\MR{670933}},
		    	doi={10.2307/1999510},
		    }

		\bib{R1}{book}{
			author={Rodrigues, J.~F.},
			title={Obstacle problems in mathematical physics},
			publisher={Elsevier Science Publishers B.V.},
			address={The Netherlands},
			date={1987},
		}

		\bib{RReview}{article}{
			author={Rodrigues, J.~F.},
			title={The Stefan problem revisited},
			conference={
				title={Mathematical models for phase change problems},
				address={\'Obidos},
				date={1988},
			},
			book={
				series={Internat. Ser. Numer. Math.},
				volume={88},
				publisher={Birkh\"auser, Basel},
			},
			date={1989},
			pages={129--190},
			review={\MR{1038069}},
		}
				\bib{R2}{article}{
					author={Rodrigues, J.~F.},
					title={Variational methods in the Stefan problem},
					date={1584},
					journal={Phase transitions and hysteresis (Montecatini Terme, 1993)},
					pages={147--212},
					note={Lecture Notes in Math., 1584, Springer, Berlin, 1994},
				}

\bib{Sacks}{article}{
	title = {Continuity of solutions of a singular parabolic equation},
	journal = {Nonlinear Analysis: Theory, Methods and Applications},
	volume = {7},
	number = {4},
	pages = {387 - 409},
	year = {1983},
	issn = {0362-546X},
	doi = {10.1016/0362-546X(83)90092-5},
	url = {http://www.sciencedirect.com/science/article/pii/0362546X83900925},
	author = {E. Sacks, Paul},
}

		\bib{S}{article}{
			author={Serrin, James},
			title={Local behavior of solutions of quasi-linear equations},
			journal={Acta Math.},
			volume={111},
			date={1964},
			pages={247--302},
			issn={0001-5962},
			review={\MR{0170096}},
			doi={10.1007/BF02391014},
		}

		\bib{SW}{article}{
			author={Serrin, James},
			author={Weinberger, H. F.},
			title={Isolated singularities of solutions of linear elliptic equations},
			journal={Amer. J. Math.},
			volume={88},
			date={1966},
			pages={258--272},
			issn={0002-9327},
			review={\MR{0201815}},
			doi={10.2307/2373060},
		}

		\bib{ZKOH}{article}{
			author={\v Zikov, V. V.},
			author={Kozlov, S. M.},
			author={Ole\u\i nik, O. A.},
			author={Ha T\cprime en Ngoan},
			title={Averaging and $G$-convergence of differential operators},
			language={Russian},
			journal={Uspekhi Mat. Nauk},
			volume={34},
			date={1979},
			number={5(209)},
			pages={65--133, 256},
			issn={0042-1316},
			review={\MR{562800}},
		}
	\bib{Ziemer}{article}{
		author = {Ziemer, William},
		title = {Interior and Boundary Continuity of Weak Solutions of Degenerate Parabolic Equations},
    journal = {Trans. Amer. Math. Soc.},
		volume = {271},
		date = {1982},
		pages = {733--748},
	}

	\end{biblist}
\end{bibdiv}

\end{document}